\documentclass[11pt]{amsart}
\usepackage{fullpage,amsmath, amssymb,amsfonts,amsmath,latexsym,amscd,amsthm, mathtools, stackrel}
\usepackage{tikz-cd}
\usepackage[top=2cm, bottom=4.5cm, left=3cm, right=3cm]{geometry}
\usepackage{float, graphicx}
\graphicspath{ {./images/} }

\usepackage{hyperref}
\hypersetup{hidelinks}

\usepackage{tikz-cd}
\usepackage{tikz}
\usepackage{graphicx,subfigure}

\usepackage{enumerate}
\usepackage[shortlabels]{enumitem}

\newtheorem{theorem}{Theorem}[section]

\newtheorem{proposition}[theorem]{Proposition}
\newtheorem{lemma}[theorem]{Lemma}

\newtheorem{fact}[theorem]{Fact}

\newtheorem{question}[theorem]{Question}

\theoremstyle{definition}

\newtheorem{remark}[theorem]{Remark}

\newtheorem{example}[theorem]{Example}

\makeatletter

\newcommand{\D}{\mathbb D}

\newcommand{\N}{{\mathbb N}}
\newcommand{\C}{\mathbb C}

\newcommand{\bP}{\mathbb P}

\newcommand{\lra}{\longrightarrow}

\newcommand{\vphi}{\varphi}
\newcommand{\bb}{\mathbb}
\newcommand{\ovl}{\overline}
\newcommand{\mult}{\operatorname{mult}}
\newcommand{\Stab}{\operatorname{Stab}}
\newcommand{\ord}{\operatorname{ord}}
\newcommand{\id}{\operatorname{id}}

\newcommand{\h}{\widehat{h}}
\newcommand{\trdeg}{\operatorname{trdeg}}

\makeatletter

\title{Degree Growth of Iterates of Curves and Likely Intersections}
\author{Sina Saleh \and Jit Wu Yap}

\begin{document}
\begin{abstract}
We study the growth of the bidegree of an ample irreducible curve in $\mathbb{P}^1 \times \mathbb{P}^1$ under a product polynomial endomorphism $\varphi=(f,g)$, where at least one of $f$ and $g$ is non-exceptional. We prove that, if the curve $C$ is not preperiodic under $(f^a,g^b)$ for any $a,b\geq 1$, then the bidegree of $\varphi^n(C)$ is asymptotic to $(\deg(g)^n,\deg(f)^n)$.

As an application of this exponential growth, we prove a geometric analogue of Silverman's theorem on the finiteness of $S$-integral points in orbits. Namely if $C$ is not $(f^a,g^b)$-preperiodic and $C'$ is not totally invariant for $\varphi$, then for any infinite sequence ${n_i}$ of positive integers, the union of the intersections
$$
\bigcup_{i \geq 1} \left( \varphi^{n_i}(C)\cap C' \right)
$$
is Zariski dense in $C'$.  
\end{abstract}
  \maketitle

\section{Introduction}
\subsection{Main Results}
Let $X$ be a projective variety and $\varphi: X \dashrightarrow X$ a dominant rational self-map. An important birational invariant of the algebraic dynamical system $(X,\varphi)$ is the collection of dynamical degrees
\[
\{\lambda_0(\varphi),\lambda_1(\varphi), \ldots, \lambda_{\dim X}(\varphi)\}.
\]
If we let $H$ be an ample divisor on $X$, we may define the \emph{$p$-th dynamical degree} of $\varphi$ as
\[
\lambda_p(\varphi) = \lim_{n \to \infty} \left( (\varphi^n)^\ast [H]^{p} \cdot [H]^{\dim(X) - p}\right)^{1/n}.
\]
It is well known that this limit exists and is independent of the choice of ample divisor $H$; see, for example, \cite{Dinh-Sibony, Truong-arbitrary-char}.
\par

Our first result classifies the curves in
$\mathbb{P}^1 \times \mathbb{P}^1$ whose degree under iteration by
$\varphi$ grows as fast as $\lambda_1(\varphi)$.  As is common in complex dynamics, maps arising from algebraic groups require separate
treatment. We say that a polynomial $f$ is \emph{exceptional} if it is affine
conjugate to either a power map or a Chebyshev polynomial. We use the notion of a weakly $\vphi$-special curve as defined in
\cite[Definition 1.7]{Myrto-Schmidt}. We can now state the
theorem.

\begin{theorem} \label{IntroDegreeTheorem1}
Let $f,g \in \C[x]$ be polynomials of degrees $d,e \geq 2$, respectively, and let
$C \subseteq \mathbb{P}^1 \times \bP^1$ be an ample irreducible curve defined over $\C$. Assume that one
of the following conditions holds:
\begin{enumerate}
    \item $f$ is non-exceptional, and $C$ is not preperiodic under $(f^a,g^b)$ for any
    $a,b \geq 1$; or

    \item both $f$ and $g$ are exceptional, and $C$ is not a weakly $(f,g)$-special curve.
\end{enumerate}
If $(a_n,b_n)$ denotes the bidegree of $\varphi^n(C)$, then there exists
$M \in \mathbb{N}$ such that
\[
(a_n,b_n) = \frac{1}{M}(e^n a_0, d^n b_0)
\]
for all sufficiently large $n$, where $(a_0,b_0)$ is the bidegree of $C$.
\end{theorem}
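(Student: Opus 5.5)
The plan is to reduce the bidegree computation to the degree of the induced map $\varphi|_{C}\colon C \to \varphi(C)$, and then show that this degree eventually stabilizes.

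\emph{Step 1: the projection formula.} For an irreducible curve $C$ of bidegree $(a,b)$, the pullback $\varphi^*\mathcal O(1,0) = \mathcal O(d,0)$ and $\varphi^*\mathcal O(0,1)=\mathcal O(0,e)$. Here $a$ is the intersection number with a vertical fibre, i.e.\ the degree of the second projection. Write $\delta(C)=\deg(\varphi|_C\colon C\to\varphi(C))$. The projection formula $\varphi_*[C]=\delta(C)\,[\varphi(C)]$ then gives
\[
\delta(C)\,\bigl(a(\varphi(C)),\,b(\varphi(C))\bigr)=(e\,a,\; d\,b).
\]
Iterating, $(a_n,b_n)=\frac{1}{M_n}(e^n a_0,d^n b_0)$ with $M_n=\prod_{i<n}\delta(\varphi^i(C))=\deg(\varphi^n|_C)$. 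So the theorem is equivalent to showing that $\delta(\varphi^n(C))=1$ for all large $n$, i.e.\ that $\varphi|_{\varphi^n(C)}$ is eventually birational onto its image.

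\emph{Step 2: what $\delta>1$ means.} Suppose $\delta(D)\ge 2$ for a curve $D$. Then a generic point of $\varphi(D)$ has at least two preimages in $D$, so $D$ meets $(\sigma,\tau)(D)$ in infinitely many points, where $\sigma,\tau$ are branches of the ``deck'' correspondences $f(x)=f(x')$ and $g(y)=g(y')$. Equivalently, consider the curve $D\times_{\varphi(D)}D\subset D\times D$. It has a component other than the diagonal, and this component projects into the correspondence $\{f(x)=f(x'),\,g(y)=g(y')\}$.

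\emph{Step 3: using the hypotheses.} Suppose $\delta(\varphi^n(C))\ge 2$ for infinitely many $n$. Since $M_n$ is an integer dividing $\gcd(e^na_0,d^nb_0)$ in the sense that $a_n,b_n\ge 1$, we have $M_n\le e^na_0$. That bound is not by itself a contradiction, so the hypotheses must enter. The key input is the theory of decompositions of polynomials (Ritt theory), as used in Medvedev--Scanlon and Ghioca--Nguyen-type results, together with the classification of invariant/periodic curves under split maps. For non-exceptional $f$, the equation $f(x)=f(x')$ has non-diagonal components of bounded complexity. The components of $\varphi^n(C)\times_{\varphi^{n+1}(C)}\varphi^n(C)$ therefore yield curves in $\mathbb P^1\times\mathbb P^1$ invariant under a correspondence commuting with $\varphi$.

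I would try to show that if the degree drops infinitely often, then a pigeonhole over the finitely many possible ``symmetry types'' produces some $n<m$ with $\varphi^m(C)$ related to $\varphi^n(C)$ by an $(f^a,g^b)$-type map. For instance, the bidegree ratios $a_n/b_n=(e/d)^n a_0/(b_0)\cdot$ const must be consistent, forcing $C$ to be $(f^a,g^b)$-preperiodic. Alternatively, $C$ can be shown to be of the form $\{F(x)=G(y)\}$, which by Medvedev--Scanlon is periodic or weakly special. The exceptional case (2) is handled analogously using that curves with symmetry under monomial/Chebyshev maps are torsion translates of subgroups, which is exactly the weakly special notion.

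\emph{Main obstacle.} The hardest step is Step 3: converting ``$\varphi|_{\varphi^n(C)}$ fails to be birational infinitely often'' into preperiodicity. I expect this to require a careful Ritt-decomposition argument showing that each degree drop comes from a common right factor of $f^n$ and of a parametrization of $C$, and that such factors stabilize. I would first try to bound the total drop $M_n$ by $\deg$ of the normalization maps $\tilde C\to\mathbb P^1$ via the two projections. Note that $M_n$ divides both $\deg(f^n\circ\pi_1)$ and $\deg(g^n\circ\pi_2)$ compatibly, and Ritt's theorem on common compositional factors gives the eventual stabilization unless $C$ is special.
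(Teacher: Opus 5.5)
\textbf{Verdict: there is a genuine gap.} Your Step 1 is correct and is exactly the paper's reduction (Lemma \ref{Intersection1}): the theorem is equivalent to $M_n=\deg(\varphi^n|_C)$ staying bounded. But Step 3 carries the entire content of the theorem, and it is not an argument; the mechanisms you suggest do not work.

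\begin{itemize}
\item The bidegree-ratio idea is vacuous. Both coordinates are divided by the same $M_n$, so $a_n/b_n=e^na_0/(d^nb_0)$ holds identically and the ratios say nothing about $M_n$.
\item The pigeonhole over ``symmetry types'' gives no relation between $\varphi^n(C)$ and $\varphi^m(C)$, because many unrelated curves share the same type. For instance, if $f,g$ are both even (say $x^2+1$ and $y^2+2$), every curve invariant under $(x,y)\mapsto(-x,-y)$ has $\delta\ge 2$ through the same component of the correspondence.
\item The claim that $C$ is of the form $\{F(x)=G(y)\}$, ``which by Medvedev--Scanlon is periodic'', reverses the logic. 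Medvedev--Scanlon classifies curves already known to be invariant. The diagonal is of this form but is in general not preperiodic.
\item Ritt theory concerns decompositions of one-variable polynomials. So ``a common right factor of $f^n$ and a parametrization of $C$'' only makes sense for polynomially parametrized curves, whereas $C$ is an arbitrary ample curve, possibly of positive genus.
\item In case (2) you aim at the wrong class. Unbounded $M_n$ yields a translate of a subtorus, not necessarily a torsion translate: $\{y=\theta x\}$ under $(z^2,z^2)$ has $M_n=2^n$ for every $\theta$.
\end{itemize}

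\textbf{The missing idea} is to localize the degree at the totally invariant point $\infty$ of $f$.
\begin{itemize}
\item Since $f^{-1}(\infty)=\{\infty\}$, the fibre of $\varphi^n|_C$ over any point of $\varphi^n(C)\cap(\{\infty\}\times\mathbb P^1)$ lies in the fixed finite set $C\cap(\{\infty\}\times\mathbb P^1)$. So if $M_n\to\infty$, the local degree of $\varphi^n|_C$ at some $(\infty,\alpha)\in C$ is unbounded.
\item Projecting to the second factor, the local degree of $g^n$ at $\alpha$ is then unbounded, which forces the orbit of $\alpha$ into a superattracting cycle of $g$ (Lemma \ref{Degree1}).
\item After passing to iterates, B\"ottcher coordinates at $\infty$ for $f$ and at $\alpha$ for $g$ conjugate $\varphi$ near $(\infty,\alpha)$ to $(z^d,z^{e'})$. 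A Puiseux argument (Lemma \ref{AnalyticCurve1}) shows that a germ with unbounded local degree under a monomial map contains a monomial branch $(cz^i,z^j)$. Hence $P(\Psi_f(cz^i),\Psi_g(z^j))=0$ for the defining polynomial $P$ of $C$.
\item The decisive input, and the only place non-exceptionality of $f$ is used, is the functional transcendence theorem for B\"ottcher coordinates (Theorem \ref{BottcherTheorem1}). It says such a relation forces $c$ to be a root of unity and $C$ to be $(f^a,g^b)$-preperiodic. Its proof combines Schmidt's and Saleh's results on B\"ottcher coordinates with Medvedev--Scanlon, applied to a curve first shown to be $(f^k,f^k)$-invariant.
\item For case (2), the paper reduces to power maps. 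There the fibres of $\varphi^n$ are orbits of $\mu_{d^n}\times\mu_{e^n}$, and unbounded $M_n$ gives an infinite stabilizer (Proposition \ref{AlgebraicCurve1}).
\end{itemize}
None of these ingredients appears in your proposal, so as written it establishes only the reduction in Step 1.
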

\begin{remark}
\label{rem:deg-growth-preimages}
One can also study the degree growth of iterated preimages of curves. This was done by Xie \cite[Proposition 8.5]{junyi-DML-A2} for an arbitrary projective surface and $\varphi$ a dominant rational self-map of topological degree at least $2$. Xie classifies when an infinite sequence of irreducible curves $\{C_n\}_{n \geq 0}$ on $X$ satisfying
\[
\varphi(C_{i+1}) = C_i
\]
for all $i \geq 0$ can have degree bounded from above. This is used as part of his proof of the dynamical Mordell--Lang conjecture for $\bb{A}^2$. 

\end{remark}

\begin{remark} \label{Remark: KagawuchiSilverman}
Theorem \ref{IntroDegreeTheorem1} is similar in spirit to the Kawaguchi--Silverman conjecture
\cite[Conjecture 6]{Kawaguchi-Silverman}, which predicts that the exponential height growth
of an algebraic point with Zariski dense orbit under a dominant self-map is
given by the first dynamical degree of the map. However for Theorem \ref{IntroDegreeTheorem1}, even in the case where $f,g$ both have the same degree $d \geq 2$, it is not sufficient that the orbit of $C$ under $\varphi$ be Zariski dense, as seen by the example of $\bb{P}^1 \times \{ a \}$. 
\end{remark}

The next examples show the necessity of conditions $(1)$ and $(2)$ in Theorem \ref{IntroDegreeTheorem1}. 

\begin{example}
Let $C$ be a preperiodic curve under $\varphi$, i.e., there exists $0 \le N < M $ such that
\[
\varphi^N(C) = \varphi^M(C). 
\]
Then, the set $\{\varphi^n(C)\}_{n \ge 0}$ contains only finitely many curves which shows that the bidegree of $\varphi^n(C)$ must remain bounded. 
\end{example}

\begin{example}
Let $\varphi = (z^2, z^2)$ and $C = \{y = \theta x\}$ for some $\theta \in \C^\ast$. Then, $\varphi^n(C) = \{y = \theta^{2^n}x\}$ which shows that the bidegree of $\varphi^n(C)$ is $(1,1)$ for all $n \ge 0$.
\end{example}

Another interesting class of examples arises when $\deg(f) \ne \deg(g)$ and some iterated image of $C$ is invariant under $(f^a,g^b)$ for some $a,b\geq 1$. In such cases, the degree of $\varphi^n(C)$ still tends to infinity, but its growth rate is strictly smaller than $\lambda_1(\varphi)$. The next example illustrates this phenomenon.
\begin{example}
Let $\varphi = (z^2, z^4)$ and $C = \{y = x\}$. Then, $\varphi^n(C) = \{y = x^{2^n}\}$ which has bidegree $(2^n,1)$. Thus, 
\[
\lim_{n \to \infty} \deg(\varphi^n(C))^{1/n} = 2 
\]
whereas the dynamical degree of $\varphi$ is equal to $4$.
\end{example}

We now give an application of Theorem \ref{IntroDegreeTheorem1} which is a likely intersection result in the spirit of the Zilber--Pink philosophy \cite{Zan12}; see also \cite{TT23, scanlon-eterovic} for likely intersection results in the context of Shimura varieties. Given two curves $C,C'$ in a surface $X$ and an endomorphism $\varphi: X \to X$, for dimension reasons we expect that $C \cap C'$ should have a non-empty intersection. As $C$ is replaced by its iterates $\vphi(C),\vphi^2(C),\ldots$, one expects new intersection points to appear, unless $C$ is special. We make this heuristic precise in the case of $X = \bb{P}^1 \times \bP^1$ and $\varphi = (f,g)$ a product polynomial endomorphism. Before stating the theorem, for an endomorphism
$\varphi:\mathbb{P}^1\times\mathbb{P}^1 \to \mathbb{P}^1\times\mathbb{P}^1$,
we say that a curve $C$ is \emph{exceptional}, or totally invariant, if $\varphi^{-1}(C)=C$.

\begin{theorem} \label{IntroTheorem1}
Let $\vphi = (f,g):\bb{P}^1 \times \bP^1 \to \bb{P}^1 \times \bP^1$ be a product polynomial endomorphism defined over $\bb{C}$. Assume that $f$ and $g$ are not both exceptional. Let $C,C'$ be irreducible curves in $\bb{P}^1 \times \bP^1$ such that $C$ is ample and is not preperiodic under $(f^a, g^b)$ for any $a,b \ge 1$ and $C'$ is not exceptional. Then for any infinite subset $\{n_i\}_{i \ge 1} \subset \N$, the union
\[
\bigcup_{i = 1}^\infty \left(\varphi^{n_i}
(C) \cap C'\right)\]
is Zariski dense in $C'$.
\end{theorem}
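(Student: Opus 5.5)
Suppose for contradiction that the union is not Zariski dense in $C'$. Since $C'$ is an irreducible curve, the union is then a finite set $S \subset C'$. So every intersection $\varphi^{n_i}(C)\cap C'$ lies in a fixed finite set $S$. Write $C_n = \varphi^n(C)$, with bidegree $(a_n,b_n)$, and let $C'$ have bidegree $(a',b')$. Then
\[
C_n\cdot C' = a_n b' + b_n a'.
\]
Theorem \ref{IntroDegreeTheorem1} applies, since one of $f,g$ is non-exceptional; after swapping coordinates we may take it to be $f$. It gives $a_n + b_n \to \infty$ exponentially, with $(a_n,b_n)=\frac1M(e^na_0,d^nb_0)$. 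If $C'$ is a fibre $\{x_0\}\times \bP^1$ or $\bP^1\times\{y_0\}$, I treat it separately; otherwise $a',b'\geq 1$. In every case the intersection number tends to infinity along $n_i$. So the total local intersection multiplicity of $C_{n_i}$ and $C'$ at the finitely many points of $S$ must blow up.

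The key step is to bound local intersection multiplicities at a fixed point $p=(x_0,y_0)\in S$. Use $C_{n}=\varphi^{n}(C)$ and the projection formula:
\[
\sum_{q\in\varphi^{-n}(p)} i_q\big(C,\varphi^{n*}C'\big) \;\geq\; \deg(\varphi^n|_C)\,\cdot i_p(C_n,C'),
\]
or more usefully, pull back $C'$. The intersection multiplicity of $C_n$ and $C'$ at $p$ is controlled by the ramification of $f^n$ at preimages of $x_0$ and of $g^n$ at preimages of $y_0$. The total multiplicity $a_nb'+b_na'$ grows like $\max(d,e)^n/M$, times the constant $\deg(\varphi^n|_C)$ factor. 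For all of it to concentrate on the finite set $S$, the points $x_0$ (respectively $y_0$) must have preimages under $f^n$ with ramification comparable to $d^n$.

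For a non-exceptional polynomial, the local degree of $f^n$ at points of $f^{-n}(x_0)$ other than $\infty$ is bounded by $C\,d^{n}\theta^n$ for some $\theta<1$, unless $x_0$ is an exceptional point; and the polynomial $f$ has only $\infty$ as an exceptional point. This is the standard fact that non-exceptional rational maps have no finite totally invariant points and the local degrees of iterates grow subexponentially slower. It forces the points of $S$ to lie on $(\{\infty\}\times\bP^1)\cup(\bP^1\times\{\infty\})$, the totally invariant lines. Then the contradiction comes from showing that $C'$ would have to meet $C_{n_i}$ only at points on these lines. Because $C'$ is not exceptional, it is not one of these lines. I would compare intersection multiplicities at points at infinity: $C_n$ near the line at infinity looks like the image of the Puiseux branches of $C$ at infinity under $(f^n,g^n)$. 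Its branches there have bounded tangency with any fixed non-invariant curve $C'$ relative to the total degree, unless $C$'s branches are invariant, i.e.\ $C$ is $(f^a,g^b)$-preperiodic. I expect this analysis of branches at infinity, especially when the other coordinate $g$ is exceptional, to be the main obstacle. My first attempt there would be Böttcher coordinates: I would write the branch as $y\sim c x^{r}$ in Böttcher coordinates and track how $r$ evolves, namely $r\mapsto r\,e/d$.

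Summing the bounded local contributions at the finitely many points of $S$ then gives $C_{n_i}\cdot C' = o(a_{n_i}b'+b_{n_i}a')$. This contradicts the computation of $C_n\cdot C'$ above and proves density.
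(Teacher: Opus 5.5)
Your global strategy matches the paper's: $\varphi^n(C)\cdot C'$ grows exponentially, so finitely many points cannot absorb it. But the step that carries all the weight is unsupported, and as stated it is not true. Ramification of $f^n,g^n$ controls the multiplicity of the germ $\varphi^n(C)$ at $p$. It also controls the naive order $\min_{(i,j)}\{d'^nk_1i+e'^nk_2j\}$ read off from the monomials of the equation of $C'$. That is only a \emph{lower} bound for $\mult_p(\varphi^n(C)\cap C')$. The leading terms of $F(f^n(a(z)),g^n(b(z)))$ can cancel, i.e.\ $\varphi^n(C)$ can become highly tangent to $C'$. This is the skewness term in the remark after Theorem \ref{IntroIntersectionTheorem1}, and Gignac's counterexample to Arnold shows such bounds are not automatic. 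Your projection-formula identity only moves the problem to $i_q(C,(\varphi^n)^*C')$, where the branches of $(\varphi^n)^*C'$ at $q$ change with $n$.

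The paper proves the needed upper bound only at \emph{fixed} points, in Theorem \ref{Multiplicity4}. When the local degrees agree and are $\ge 2$, one conjugates to $z^d$ and must show $\ord_z\sum c_{i,j}(a^ib^j/z^L)^{d^n}$ stays bounded. A Skolem--Mahler--Lech argument does this unless some $a^i/b^j$ is a root of unity, and that exception is exactly where non-preperiodicity of $C$ is used. Arnold's theorem covers the unramified case. You also never handle non-periodic points of $S$. The paper needs Theorem \ref{BackwardOrbit1}, proved via the dynamical Bogomolov conjecture for split maps over number fields and function fields, to know such a point lies on $\varphi^n(C)$ for only finitely many $n$. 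Only then can one pass to iterates and assume every point of $S$ is fixed.

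Second, your final estimate $C_{n_i}\cdot C'=o(a_{n_i}b'+b_{n_i}a')$ is false when $d=e$. At $(\infty,\infty)$, and at $(\infty,0)$ if $g$ is a power map, both coordinates are totally ramified. Suppose $C,C'$ pass through such a point and a branch of $C$ there has orders $(k,s)$ in the chart $(1/x,1/y)$. Then the trivial lower bound already gives local multiplicity at least $d^n\min\{k,s\}\ge d^n$. Once $\varphi^n|_C$ is made injective, this is a fixed positive proportion of $\varphi^n(C)\cdot C'=d^n(C\cdot C')$.

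So no $o(\cdot)$ statement is available there; what is needed is a deficit of size exactly $d^n$. The paper obtains it in Lemmas \ref{OrbitLemma3} and \ref{OrbitLemma4}. It combines the exact asymptotic $\sum_w d^n\min_{(i,j)}\{(v-i)k_w+(u-j)s_w\}+O(1)$ from Theorem \ref{Multiplicity4} with $\sum_w k_w\le b_0$ and $\sum_w s_w\le a_0$. It also uses a non-constant monomial in the equation of $C'$ in the relevant chart, which is where non-exceptionality of $C'$ enters. The result is that the exceptional points absorb at most $\varphi^n(C)\cdot C'-d^n+O(1)$. Non-exceptional fixed points contribute $O(\min\{d',e'\}^n)=o(d^n)$ by Lemma \ref{OrbitLemma2}.

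Your B\"ottcher heuristic $r\mapsto re/d$ tracks exponents, not tangency, and cannot produce this count. The fibre case $C'=\bP^1\times\{\alpha\}$ that you defer needs the same two ingredients: Theorem \ref{BackwardOrbit1} if $\alpha$ is not periodic, and the bound $c(e')^n$ with $e'<e$ if it is.
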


Theorem \ref{IntroTheorem1} may be viewed as a geometric analogue of a theorem of Silverman on finiteness of $S$-integral points in orbits \cite{Sil93}, stated below.

\begin{theorem}[\cite{Sil93}] \label{IntroSilverman1}
Let $K$ be a number field, $S$ a finite set of places and $\vphi: \bb{P}^1 \to \bb{P}^1$ be a rational map defined over $K$. Let $\alpha,\beta \in \bb{P}^1(K)$ be two points such that $\alpha$ is not preperiodic and $\beta$ is non-exceptional, i.e. $|\vphi^{-n}(\beta)| \to \infty$ as $n \to \infty$. Then there are only finitely many $n$ such that $\vphi^n(\alpha)$ is $S$-integral relative to $\beta$.     
\end{theorem}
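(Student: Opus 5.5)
The statement is Silverman's theorem, quoted from \cite{Sil93}. Here is how I would reprove it with Roth's theorem and local heights. Suppose, for contradiction, that $\vphi^n(\alpha)$ is $S$-integral relative to $\beta$ for infinitely many $n$. Let $d=\deg\vphi\ge 2$. Fix an integer $m\ge 1$, to be chosen below, and set $\gamma_n=\vphi^{n-m}(\alpha)$ for $n\ge m$. Enlarge $K$ to a finite extension $L$ over which all points of $\vphi^{-m}(\beta)$ are defined, and enlarge $S$ to the places of $L$ above $S$; integrality is preserved under this change.

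\textbf{Local heights.} For each place $v$, let $\lambda_v(\cdot,\beta)$ be a Weil local height relative to $\beta$. Integrality means $\lambda_v(\vphi^n\alpha,\beta)=O(1)$ for $v\notin S$, with the bound uniform in $n$. Hence
\[
h(\vphi^n\alpha)=\sum_{v\in S}\lambda_v(\vphi^n\alpha,\beta)+O(1).
\]
Functoriality of local heights under $\vphi^m$ gives
\[
\lambda_v(\vphi^m\gamma,\beta)=\sum_{\beta'\in\vphi^{-m}(\beta)}e_{\vphi^m}(\beta')\,\lambda_v(\gamma,\beta')+O_v(1).
\]
The points $\beta'$ are distinct, so at each $v$ at most one of the $\lambda_v(\gamma,\beta')$ is large; the others are $O(1)$. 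Combining with $h(\vphi^n\alpha)=d^m h(\gamma_n)+O(1)$, we get
\[
d^m h(\gamma_n)\le E_m\sum_{v\in S}\max_{\beta'}\lambda_v(\gamma_n,\beta')+O(1),
\qquad E_m=\max_{\beta'\in\vphi^{-m}(\beta)}e_{\vphi^m}(\beta').
\]

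\textbf{Applying Roth.} Roth's theorem over number fields, in the form allowing a finite set of algebraic targets with one target chosen at each $v\in S$, gives
\[
\sum_{v\in S}\max_{\beta'}\lambda_v(\gamma,\beta')\le(2+\epsilon)\,h(\gamma)+O(1)
\]
for all but finitely many $\gamma\in\bP^1(L)$. Since $\alpha$ is not preperiodic, the $\gamma_n$ are pairwise distinct and $h(\gamma_n)\to\infty$. So for infinitely many $n$ we obtain $d^m h(\gamma_n)\le E_m(2+\epsilon)h(\gamma_n)+O(1)$. This is impossible once $d^m>3E_m$.

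\textbf{Main obstacle: the ramification bound.} The key step is to show that, for $\beta$ non-exceptional, $E_m/d^m\to 0$, so a suitable $m$ exists. I would prove it by following backward chains $\beta'=z_m\mapsto z_{m-1}\mapsto\cdots\mapsto z_0=\beta$. The multiplicity $e_{\vphi^m}(\beta')$ is the product of the local degrees $e_\vphi(z_i)$.

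If the chain meets non-periodic critical points, it does so at most $2d-2$ times, since $\vphi$ has only $2d-2$ critical points counted with multiplicity. The only way to accumulate multiplicity comparable to $d^m$ is to sit in a critical periodic cycle. For a cycle of period $p$, the multiplicity gain per period is $\prod e_\vphi$ along the cycle. This product is strictly smaller than $d^p$ unless every point of the cycle is totally ramified. That happens exactly when the cycle is totally invariant, i.e.\ when $\beta$ (lying over it) is exceptional. Hence $E_m\le C\,\rho^m$ for some $\rho<d$, and we may choose $m$ with $d^m>3E_m$. This contradicts the assumption of infinitely many integral iterates.
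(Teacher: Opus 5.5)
The paper does not reprove this statement (it is quoted from \cite{Sil93}), and your argument is correct and is essentially Silverman's original proof: you apply Roth's theorem to $\vphi^{n-m}(\alpha)$ against the finite set $\vphi^{-m}(\beta)$, and combine it with the ramification estimate $\max_{\beta'\in\vphi^{-m}(\beta)}e_{\vphi^m}(\beta')=o(d^m)$, which holds because local degrees along a forward orbit grow only inside a periodic critical cycle (cf.\ \cite[Lemma 4.3]{Sil93}, which the paper invokes in Lemma \ref{Degree1}) and such a cycle gains the full factor $d^p$ per period only when it is totally invariant. This is exactly the strategy the paper transplants to curves in Section \ref{sec:pf-of-intersection-thm}: the $d^m$-growth of heights is replaced by the growth of bidegrees from Theorem \ref{IntroDegreeTheorem1}, and your bound $E_m=o(d^m)$ plays the role of Lemma \ref{OrbitLemma2}.
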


Given $\alpha,\beta \in \bb{P}^1(K)$, we may spread out to obtain one-dimensional closed subschemes $\tilde{\alpha}, \tilde{\beta}$ of the arithmetic surface $\bb{P}^1_{O_K}$. Then Theorem \ref{IntroSilverman1} is equivalent to saying that for any infinite subset $\{n_i\}_{i \geq 1}$ of $\N$, the union
$$\bigcup_{i \geq 1} \left(\vphi^{n_i}(\tilde{\alpha}) \cap \tilde{\beta} \right)$$
is infinite, which is analogous to the conclusion of Theorem \ref{IntroTheorem1}. We note that Theorem \ref{IntroTheorem1} is also closely related to a recent theorem
of Baldi and Urbanik \cite[Theorem~1.8]{Baldi-Urbanik}. Baldi--Urbanik's result and Theorem \ref{IntroTheorem1} motivate the following general question.

\begin{question} \label{IntroQuestion1}
Let $X$ be a variety over a field $k$ and $\vphi:X \to X$ an endomorphism. Let $V$ and $W$ be subvarieties of $X$ such that $\dim V + \dim W \geq \dim X$ and let $\{n_i\}_{i \geq 1} \subset \N$ be an infinite subset. Under what hypotheses can we conclude that 
$$\bigcup_{i \geq 1} \left(\vphi^{n_i}(V) \cap W \right) $$
is Zariski dense in $W$?
\end{question}

We may also ask a weaker version of Question \ref{IntroQuestion1} where instead of considering the union of $\vphi^n(V) \cap W$ over an infinite sequence, we may consider the union over all positive integers $n$ instead.

\begin{question}
\label{weaker-question}
Suppose that we are in the setting of Question \ref{IntroQuestion1}. Under what hypotheses can we conclude that 
\[
\bigcup_{n=1}^{\infty} (\vphi^n(V) \cap W)
\]
is Zariski dense in $W$? 
\end{question}

A key ingredient in the proof of Theorem \ref{IntroTheorem1} is an upper bound
for the intersection multiplicity of $\varphi^n(C)$ and $C'$ at a fixed
point $p$ of $\varphi$. Our final result establishes such a bound.

\begin{theorem} \label{IntroIntersectionTheorem1}
Let $\vphi = (f,g)$ be a product polynomial map with $\max\{\deg f , \deg g\} \geq 1$ and $C,C'$ curves such that $C$ is not $(f^a,g^b)$ preperiodic for any $a,b \ge 1$. Let $p = (\alpha,\beta) \in (\bb{P}^1)^2$ be a fixed point for $\vphi$ and let $d',e'$ be the local degree of $\alpha,\beta$ as a fixed point for $f$ and $g$, respectively. Then there exists $c > 0$ and $n_0 \ge 1$ such that for all $n \ge n_0$ we have
\[
\operatorname{mult}_p\bigl(\varphi^n(C)\cap C'\bigr)
=
\begin{cases}
c\,\min\{d',e'\}^n + O(1), & \text{if } C' \text{ is ample}, \\[4pt]
c\,(d')^n, & \text{if } C'=\{\alpha\}\times \mathbb P^1, \\[4pt]
c\,(e')^n, & \text{if } C'=\mathbb P^1\times\{\beta\}.
\end{cases}
\]
\end{theorem}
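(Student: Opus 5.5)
The plan is to reduce the statement to a computation on finitely many analytic branches through $p$, after putting $\varphi$ into a local normal form near $p$. In a neighbourhood of $p$ I would conjugate $f$ near $\alpha$ and $g$ near $\beta$ to monomial or linear models. If $d' \geq 2$, Böttcher's theorem gives a local coordinate $x$ with $f(x) = x^{d'}$. If $d'=1$ and the multiplier $\lambda$ is not a root of unity, I would use a formal (Koenigs/Poincaré-type) linearization $x \mapsto \lambda x$; since intersection multiplicities depend only on formal germs, formal conjugacy suffices. The same is done for $g$ at $\beta$ with a coordinate $y$. The parabolic case ($\lambda$ a root of unity) needs separate treatment. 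There I would pass to an iterate $f^a$, which is allowed because the hypothesis is about $(f^a,g^b)$, and use the formal normal form $x \mapsto x + x^{r+1} + \cdots$; in that case the multiplicity should stay bounded, consistent with $\min\{d',e'\}^n = 1$.

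Next I would describe the germ of $\varphi^n(C)$ at $p$. It is the union of the images of the branches of $C$ at the points $q \in C$ with $\varphi^n(q) = p$. For each such $q$ there is a first time at which the branch lands on $p$, after which the fixed point takes over. So, up to replacing $C$ by $\varphi^{n_1}(C)$ and paying a bounded shift in $n$, one is reduced to finitely many branches $\gamma$ through $p$. Here one must check that new preimage points produce only finitely many relevant branches, or contributions of the same shape. Parametrize such a branch as $t \mapsto (t^{m} u(t), t^{k} v(t))$ with $u,v$ units, allowing $m$ or $k$ to be $\infty$ if $\gamma$ lies in a fibre. In Böttcher coordinates $\varphi^n\circ\gamma$ becomes $t \mapsto (t^{m d'^n}u(t)^{d'^n},\, t^{k e'^n}v(t)^{e'^n})$. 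The image branch is this parametrization divided by its generic covering degree $M_n$. I expect $M_n$ to stabilize at some $M$, because it is determined by a gcd of exponents together with a finite symmetry group of the branch. The multiplicity $\operatorname{mult}_p(\varphi^n(C)\cap C')$ is then $\frac{1}{M}\sum_\gamma \ord_t\, h(\varphi^n\gamma(t))$, where $h$ is a local equation of $C'$.

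For $C' = \{\alpha\}\times\mathbb{P}^1$ we have $h = x$, and the order is exactly $m d'^n$, which gives $c\,(d')^n$; the case $\mathbb{P}^1\times\{\beta\}$ is symmetric. For ample $C'$ I would factor $h$ into Puiseux branches, so that each factor vanishes along a curve $x^{s}=\zeta y^{r}\cdot(\text{unit})$. The order of $h\circ\varphi^n\gamma$ is then $\min\{s\,k e'^n,\; r\,m d'^n\}$, except when the two leading terms cancel. When $d'\neq e'$ the minimum is eventually attained strictly, and this yields $c\min\{d',e'\}^n$ exactly.

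The main obstacle is the resonant case $d'=e'$, together with the case where $s k e'^n = r m d'^n$ persistently. There the leading coefficients $\zeta$ and $u(0)^{\cdot}/v(0)^{\cdot}$ could cancel, and one must show that cancellation adds only $O(1)$. My approach would be to argue that unbounded extra vanishing forces $\varphi^n\gamma$ to agree with a branch of $C'$ to ever higher order. Passing to a limit, this produces a branch of the form $x^{s}=\zeta y^{r}$ in Böttcher coordinates that is invariant under some $(f^a,g^b)$. Since $C$ is irreducible and the local branch determines the global curve, this would make $C$ preperiodic under $(f^a,g^b)$, contradicting the hypothesis. Making this limiting argument rigorous, and uniform over the finitely many branches, is where I expect most of the work to lie.
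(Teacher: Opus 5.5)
Your reduction to branches through $p$, local normal forms and orders of $h\circ\varphi^n\gamma$ is the paper's. Your treatment of $d'\neq e'$ and of the two fibres agrees with Theorem~\ref{Multiplicity4}. You are also right that branches of $\varphi^n(C)$ at $p$ can come from points $q\neq p$ of $C$ with $\varphi^n(q)=p$; the paper passes over this, and it needs a finiteness statement for such preperiodic points on $C$. However, the two steps that carry the content are left as expectations, and the first cannot be settled locally.

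\textbf{Stabilization of $M_n$.} When $d',e'\geq 2$, Lemma~\ref{AnalyticCurve1} shows that $M_n$ is unbounded only if the branch is a monomial curve $x^k=c\,y^m$ in B\"ottcher coordinates. Such branches, with $c$ not a root of unity, do occur on non-preperiodic curves, and there the claimed asymptotics fail. Take $f=g=z^2$, $p=(0,0)$, $C=\{x=cy\}$ with $c$ not a root of unity, and $C'=\{x=y\}$. Then $\varphi^n(C)=\{x=c^{2^n}y\}$ meets $C'$ transversally, so $\operatorname{mult}_p(\varphi^n(C)\cap C')=1$ for every $n$. So a global input is indispensable. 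In the paper it enters through the injectivity hypothesis of Theorem~\ref{Multiplicity4}. That hypothesis comes from Theorem~\ref{DegreeTheorem1} (eventual constancy of $\deg\varphi^n|_C$), which rests on the B\"ottcher transcendence result Theorem~\ref{BottcherTheorem1}. The statement therefore tacitly needs the hypotheses of Theorem~\ref{DegreeTheorem1}.

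\textbf{The resonant case $d'=e'$ is not proved.} For $d'=e'=1$, ``the multiplicity should stay bounded'' is exactly Arnold's theorem \cite{Arnold-bounded-mult}, which the paper invokes as Case~2. Your normal-form sketch contains no argument; completing it would mean reproving Arnold's theorem for split germs, including parabolic and mixed cases.

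For $d'=e'=d\geq 2$ the paper's key step is Proposition~\ref{Multiplicity1}. After conjugating to $(z^d,z^d)$, and removing a common power of $z$ and the terms whose order tends to infinity, one faces $\sum c_{ij}A_{ij}(z)^{d^n}$ with $A_{ij}(0)\neq 0$. Each coefficient is an exponential polynomial in the exponent (Lemma~\ref{Multiplicity2}). Skolem--Mahler--Lech, with a modulus independent of the coefficient index, forces identical vanishing on a progression. Lemma~\ref{Multiplicity3} then yields a branch $x^I=\zeta y^J$ with $\zeta$ a root of unity, hence preperiodicity. Your ``agreement to ever higher order, then pass to a limit'' does not say what converges, so it is not yet an argument.

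Within your own Puiseux set-up the step does close directly. Write $\gamma=(\tau^m w(\tau),\tau^k)$ with $w(0)=w_0\neq 0$, so that $\varphi^n\gamma=(\tau^{md^n}w^{d^n},\tau^{kd^n})$. Let $\psi(y)=w_0^{d^n}y^{m/k}+O(y^{q'})$, with $q'>m/k$, be a Puiseux root of $C'$. Then
\[
\operatorname{ord}_\tau\bigl(\tau^{md^n}w(\tau)^{d^n}-\psi(\tau^{kd^n})\bigr)=md^n+\operatorname{ord}_\tau(w-w_0)
\]
as soon as $d^n(kq'-m)>\operatorname{ord}_\tau(w-w_0)$, because $w^{D}-w_0^{D}=(w-w_0)\cdot(\text{unit})$. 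So the excess over the main term is at most $\operatorname{ord}_\tau(w-w_0)$ times the number of Puiseux roots of $C'$, uniformly in $n$. The only exception is $w$ constant, which is exactly the monomial case above. This avoids Skolem--Mahler--Lech, but not the need for the global input.
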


\begin{remark}
The multiplicities
$$
\operatorname{mult}_p\bigl(\varphi^n(C)\cap C'\bigr)
$$
appearing in Theorem \ref{IntroIntersectionTheorem1} are sometimes referred to as \emph{Milnor numbers} \cite{Arnold-problems}. In this language, Theorem \ref{IntroIntersectionTheorem1} may be viewed as a refinement of a special case of Arnold’s conjecture \cite[Problem 1994-49]{Arnold-problems}, which predicted that, in the general setting where $\varphi$ is a holomorphic germ fixing a point and $C,C'$ are germs of analytic curves, these Milnor numbers grow at most exponentially. Arnold \cite[Theorem 1]{Arnold-bounded-mult} proved the conjecture in the case where $\varphi$ is a local biholomorphism; this corresponds to the case $d'=e'=1$ in Theorem \ref{IntroIntersectionTheorem1}. Seigal and Yakovenko \cite{seigal-yakovenko} later gave a different proof of this result. Arnold’s conjecture is also known to hold when the derivative $D\varphi_p$ has exactly one zero eigenvalue; see \cite[Section 5]{Arnold-bounded-mult}, \cite[Remark 5.3]{Gignac}, and \cite{Ruggiero}. However, Arnold’s general conjecture on the growth rate of intersection multiplicities was subsequently shown to be false by Gignac \cite{Gignac}. 
\end{remark}
\begin{remark}
Suppose that $p=(0,0)$ in the statement of Theorem \ref{IntroIntersectionTheorem1}. There is a formula relating the intersection multiplicity $\operatorname{mult}_p(\varphi^n(C)\cap C')$ to the multiplicities of the curves $\varphi^n(C)$ and $C'$:
\[
\operatorname{mult}_p(\varphi^n(C)\cap C')
=
m(\varphi^n(C))\,m(C')\,
\alpha\left(\nu_{\varphi^n(C)}\wedge \nu_{C'}\right),
\]
where $m(\varphi^n(C))$ and $m(C')$ denote the corresponding curve multiplicities, and $\alpha$ is the skewness function on the valuative tree; see \cite{eigenvaluations}. Using Theorem \ref{IntroDegreeTheorem1}, it is straightforward to check that there exists a constant $c>0$ such that
\[
m(\varphi^n(C)) \le c\min\{d',e'\}^n
\]
for all $n\ge 1$. The main difficulty is therefore to show that
\[
\alpha\left(\nu_{\varphi^n(C)}\wedge \nu_{C'}\right)
\]
remains uniformly bounded. This boundedness, however, does not seem to follow directly from the results of \cite{eigenvaluations}. We also refer the reader to the proof of \cite[Theorem B]{Gignac}, which uses the theory of the valuative tree to study the generic behavior of multiplicity growth under iteration.
\end{remark}

\subsection{Dynamical Degrees and Degree Growth of Curves}

Many dynamical features of a dynamical system are reflected in its dynamical degrees. These include, for example, entropy, the existence of invariant fibrations, the distribution of periodic points of $\varphi$, and the height growth of algebraic points when $\varphi$ is defined over a number field. We refer the reader to the introduction of \cite{Dang-Favre} and the references therein for further discussion of the significance of dynamical degrees.

The introduction of dynamical degrees dates back to Friedland \cite{Friedland}, building on earlier work of Gromov \cite{Gromov} and Yomdin \cite{Yomdin} relating entropy to volume growth. This was later developed extensively by Russakovskii and Shiffman \cite{Russakovskii} in the case of $X = \bP^n$, and by Dinh and Sibony \cite{Dinh-Sibony} in the general setting of dominant rational self-maps of arbitrary projective varieties. A purely algebraic definition of dynamical degrees for rational maps over algebraically closed fields was later introduced by Truong, first in characteristic zero \cite{Truong-char-0} and subsequently in greater generality for correspondences over fields of arbitrary characteristic \cite{Truong-arbitrary-char}.

Suppose that $X$ is a complex projective surface, that $\varphi:X\to X$ is a surjective endomorphism over $\C$, and that $C$ is an ample irreducible curve on $X$. The first dynamical degree of $\varphi$ is given by
$$
\lambda_1(\varphi)
:=
\lim_{n\to\infty}
\left( [C]\cdot (\varphi^n)^\ast[C] \right)^{1/n} = \lim_{n \to \infty} \left([C] \cdot (\vphi^n)_* [C] \right)^{1/n}.
$$
Hence the first dynamical degree $\lambda_1(\varphi)$ can be seen as measuring the asymptotic growth of the the pushforward cycles $(\varphi^n)_*[C]$ in an algebraic sense.

Observe that the cycle $(\varphi^n)_*[C]$ records not only the image of $C$, but also the generic degree with which $C$ maps onto its image, i.e. we have
$$
(\varphi^n)_\ast [C] = 
\deg\left(\varphi^n|_C\right) \cdot [\varphi^n(C)].
$$
Thus, from a geometric perspective, it is natural to separate these two contributions and ask the following question. 
\begin{question}
\label{question:deg-growth}
Suppose $X$ is a complex projective surface and $\varphi: X\to X$ is a surjective endomorphism. For which irreducible ample curves $C \subset X$ does the limit
\[
\lim_{n \to \infty} \deg(\varphi^n(C))^{1/n}
\]
exist and equal the first dynamical degree $\lambda_1(\varphi)$?
\end{question}
Here, the degree of a curve is measured with respect to an arbitrary fixed ample line bundle $L$. This viewpoint retains information about the actual
geometry of the orbit of $C$, since $\deg(\varphi^n(C))$ depends on the curve
$C$ itself, rather than only on its cohomology class $[C]$. 

Theorem \ref{IntroDegreeTheorem1} answers Question \ref{question:deg-growth} in the case where
$X = \mathbb{P}^1 \times \mathbb{P}^1$, $\varphi = (f,g)$ is a product
polynomial endomorphism, and $C$ is an ample irreducible curve; equivalently,
$C$ has bidegree $(c_1,c_2)$ with $c_1,c_2 \geq 1$. 

\subsection{Outline of the paper.}
In Section \ref{sec:plane-curves}, we review the basic definitions about plane curves, branches and their parametrizations, and intersection multiplicities. In Section \ref{sec:Bottcher}, we prove a functional transcendence result for B\"ottcher coordinates, Theorem \ref{BottcherTheorem1}, which is needed in the proof of Theorem \ref{IntroDegreeTheorem1}. Section \ref{sec:Degree} is devoted to the proof of Theorem \ref{IntroDegreeTheorem1}, and Section \ref{sec:mult-growth} to the proof of Theorem \ref{IntroIntersectionTheorem1}. In Section \ref{sec:BackwardOrbit}, we prove a finiteness result regarding backward orbits and curves $C$. In Section \ref{sec:pf-of-intersection-thm}, we combine the results of Sections \ref{sec:Degree} and \ref{sec:mult-growth} to prove Theorem \ref{IntroTheorem1}. Finally, in Section \ref{sec:degenerate-cases}, we investigate the likely intersections of $\varphi^n(C)\cap C'$ for pairs $(C,C')$ excluded by the hypotheses of Theorem \ref{IntroTheorem1}; see Theorem \ref{thm:excluded-cases}.

\vspace{2em}

\textbf{Acknowledgments.} We thank Laura DeMarco for her continued support throughout this project. We are also grateful to Harry Schmidt and Myrto Mavraki for many helpful discussions.

\section{Preliminaries on Plane Curves}
\label{sec:plane-curves}
In this section, we recall some background on plane curves. Our main reference is \cite{Wal04}.

\subsection{Plane Curves}

Let $O:=(0,0)$ denote the origin in $\mathbb C^2$. By a \emph{plane curve}, we mean a set of the form
\[
\{h=0\},
\]
where $h:U\longrightarrow \mathbb C$ is a holomorphic function on some open neighborhood $U$ of $O$. Equivalently, we may take $h$ to be an element of $\mathbb C\{x,y\}$, the ring of convergent power series in two variables near $O$.

Let $C$ be a plane curve defined by $\{h=0\}$. Suppose that $\phi,\psi:D\longrightarrow \mathbb C$ are holomorphic functions defined on an open neighborhood $D$ of $0$, satisfying
\[
\phi(0)=\psi(0)=0
\]
and
\[
h(\phi(t),\psi(t))=0
\]
for all $t\in D$. Then we call $(\phi,\psi)$ a \emph{parametrization} of $C$. We say that such a parametrization is a \emph{good parametrization} if the map
\[
t\longmapsto (\phi(t),\psi(t))
\]
is injective for all $|t|<\epsilon$, for some $\epsilon>0$.

The ring $\mathbb C\{x,y\}$ is a unique factorization domain \cite[Theorem 2.2.5]{Wal04}. Hence every $h\in \mathbb C\{x,y\}$ admits a factorization
\[
h=\prod_{i=1}^r h_i^{\alpha_i},
\]
where the $h_i$ are irreducible elements of $\mathbb C\{x,y\}$ and the $\alpha_i$ are positive integers. We refer to each curve $\{h_i=0\}$ as a \emph{branch} of $C$. The following fact is immediate from this factorization.

\begin{fact}
\label{fact:finite-branches}
A plane curve has finitely many branches.
\end{fact}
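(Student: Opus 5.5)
The statement to prove is Fact \ref{fact:finite-branches}: a plane curve $C=\{h=0\}$ has finitely many branches. I would deduce it directly from the unique factorization in $\mathbb C\{x,y\}$ quoted above.

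\textbf{Step 1: the non-unit case.} Assume $h$ is a nonzero non-unit of $\mathbb C\{x,y\}$. Write
\[
h=u\prod_{i=1}^r h_i^{\alpha_i}
\]
with $u$ a unit, the $h_i$ pairwise non-associate irreducibles, and $\alpha_i\ge 1$. This is a finite product because $\mathbb C\{x,y\}$ is a UFD \cite[Theorem 2.2.5]{Wal04}.

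\textbf{Step 2: the zero set near $O$.} Since $u(O)\neq 0$, the function $u$ is nonvanishing on a neighborhood of $O$. Hence, near $O$, we have
\[
\{h=0\}=\bigcup_{i=1}^r\{h_i=0\}.
\]

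\textbf{Step 3: branches are well defined.} The branches of $C$ are the curves $\{h_i=0\}$. By uniqueness of factorization, the $h_i$ are determined by $h$ up to units and reordering. A unit does not change the zero set near $O$, so the collection of branches is well defined.

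\textbf{Step 4: conclusion.} There are at most $r<\infty$ branches, which proves the claim for a nonzero non-unit $h$.

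\textbf{Degenerate cases.} If $h$ is a unit, then $C$ is empty near $O$ and has no branches. If $h=0$, then $C$ is not a curve, so this case is excluded by convention.

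There is no real obstacle. The only care needed is to check that units do not contribute branches, which Step 2 handles, and that branches do not depend on the choice of factorization, which Step 3 handles.
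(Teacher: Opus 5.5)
Your proof is correct and takes the same approach as the paper, which calls the fact immediate from the finite factorization $h=\prod_{i=1}^r h_i^{\alpha_i}$ into irreducibles in the UFD $\mathbb C\{x,y\}$. Your extra care with the unit factor and the degenerate cases is harmless, and slightly more precise than the paper's factorization, which omits the unit.
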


\begin{fact}[{\cite[Lemma 2.3.1]{Wal04}}]
\label{fact:good-param-det-branch}
A branch of a plane curve is uniquely determined by any good parametrization of it.
\end{fact}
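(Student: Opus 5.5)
The plan is to show that a single good parametrization cannot lie on two different branches. Concretely, suppose $(\phi,\psi)$ is a good parametrization and $h,k\in\mathbb C\{x,y\}$ are irreducible with
\[
h(\phi(t),\psi(t))=k(\phi(t),\psi(t))=0 \quad\text{for all } t\in D.
\]
I will prove that $h$ and $k$ are associates, so $\{h=0\}$ and $\{k=0\}$ are the same branch. The idea is that two non-associate irreducible germs meet only at $O$ near the origin. The image of a good parametrization, however, is an infinite set: $t\mapsto(\phi(t),\psi(t))$ is injective on $|t|<\epsilon$. So it cannot lie in both zero sets.

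\textbf{Step 1 (normalize).} After a generic linear change of coordinates, which preserves irreducibility, associateness and goodness of the parametrization, I may assume $h$ and $k$ are both $y$-regular. Here I use that $h(0,y)$ and $k(0,y)$ are not identically zero for a generic choice of direction. By the Weierstrass preparation theorem, $h=u\cdot P$ and $k=v\cdot Q$, where $u,v$ are units and $P,Q\in\mathbb C\{x\}[y]$ are Weierstrass polynomials. Thus $P(0,y)=y^m$ and $Q(0,y)=y^{m'}$. Since $h,k$ are irreducible, so are $P,Q$ as Weierstrass polynomials. By Gauss's lemma over the UFD $\mathbb C\{x\}$, they are irreducible in $\mathbb C((x))[y]$. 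If $h,k$ were not associates, then $P\neq Q$, so $P$ and $Q$ would be coprime in $\mathbb C((x))[y]$.

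\textbf{Step 2 (resultant).} Coprimality gives a nonzero resultant $R(x)=\operatorname{Res}_y(P,Q)\in\mathbb C\{x\}$. It also gives an identity $R=AP+BQ$ with $A,B\in\mathbb C\{x\}[y]$. Substituting the parametrization yields $R(\phi(t))\equiv 0$. Since $R$ is a nonzero power series, this forces $\phi\equiv 0$. Then $P(0,\psi(t))=\psi(t)^m\equiv0$, so $\psi\equiv0$. The map $t\mapsto(\phi(t),\psi(t))$ is therefore constant, contradicting injectivity on $|t|<\epsilon$. Hence $h$ and $k$ are associates.

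\textbf{Step 3 (conclude).} A good parametrization lies on some branch, namely on $\{h_i=0\}$ for some irreducible factor $h_i$ of $h$, because $\mathbb C\{t\}$ is a domain. By Steps 1 and 2 this branch is unique, so the parametrization determines it.

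The main obstacle is Step 1: justifying that irreducibility transfers cleanly to the Weierstrass polynomials and then to $\mathbb C((x))[y]$, so that the resultant is nonzero. Everything after that is a direct substitution argument.
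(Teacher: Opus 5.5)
Your argument is correct, but it takes a different route from the source. The paper gives no proof of its own and defers to Wall, where the statement is a by-product of the Newton--Puiseux theory of branches. Roughly: a good parametrization is reparametrized into Puiseux form $(s^n,\tilde\psi(s))$, and the fractional power series $\tilde\psi(x^{1/n})$ is a root of exactly one irreducible factor of the defining equation.

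You avoid Puiseux altogether. Weierstrass preparation and the resultant show that, for non-associate irreducible $h,k$, the ideal $(h,k)$ contains a nonzero series in $x$ alone. So a common parametrization has $\phi\equiv 0$, and then $P(0,y)=y^m$ forces $\psi\equiv 0$.

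\textbf{What each route buys.} Yours is more elementary and gives a slightly stronger statement. Injectivity is used only to exclude the constant map, so every non-constant parametrization lies on exactly one branch. The Puiseux route gives more structure. It shows that the image of a good parametrization is the whole branch germ, and that good parametrizations of a branch agree up to reparametrization. That is the kind of information behind the companion fact the paper cites immediately afterwards (Wall's Lemma 2.3.2), which makes $\operatorname{mult}_O(\Gamma_1,\Gamma_2)$ independent of the chosen parametrization. Your argument does not provide this, but it is not needed for the statement at hand.

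\textbf{Two small remarks.} First, Gauss's lemma gives irreducibility of $P,Q$ over $\operatorname{Frac}\mathbb{C}\{x\}$, not over the formal field $\mathbb{C}((x))$. Irreducibility over $\mathbb{C}((x))$ is true, but it needs more than Gauss. This is harmless, because coprimality in $K[y]$ and the nonvanishing of the Sylvester resultant do not change when $K$ is enlarged. Second, the step you flag as the main obstacle is routine. A monic factorization of $P$ in $\mathbb{C}\{x\}[y]$ reduces modulo $x$ to a factorization of $y^m$. Hence each factor is a Weierstrass polynomial of positive degree, and therefore a non-unit of $\mathbb{C}\{x,y\}$.
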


The next result is the Newton–Puiseux theorem, which may be regarded as an extension of the implicit function theorem in the setting of plane curves. See \cite[Sections 2.1 and 2.2]{Wal04}.

\begin{theorem}[Newton-Puiseux Theorem]
Any branch of a curve $C$ not coinciding with $\{x = 0\}$ and defined by an equation $f (x, y) = 0$, where $f \in \C\{x,y\}$ is a convergent power series with
$f(O) = 0$ admits a good parametrization of the form
\[
(t^m, \sum_{i = 0}^\infty a_it^i),
\]
where $m \ge 1$ is an integer and $\sum_{i = 0}^\infty a_it^i$ is a convergent power series in a neighborhood of zero. 
\end{theorem}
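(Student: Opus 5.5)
We prove the Newton--Puiseux theorem by Weierstrass preparation followed by an analysis of the monodromy of the roots.

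\emph{Reduction to a Weierstrass polynomial.} Let $h$ be an irreducible factor of $f$ in $\C\{x,y\}$ defining the branch. Since the branch is not $\{x=0\}$, $h$ is not a unit multiple of $x$. Because $h$ is irreducible, $x \nmid h$, so $h(0,y)\not\equiv 0$. Let $m=\ord_y h(0,y)\ge 1$. The Weierstrass preparation theorem lets us write $h = u\cdot P$ with $u$ a unit and
\[
P(x,y)=y^m+c_1(x)y^{m-1}+\cdots+c_m(x), \qquad c_i(0)=0.
\]
The polynomial $P$ is irreducible in $\C\{x\}[y]$, and it defines the same germ as $h$. Irreducibility implies that the discriminant of $P$ in $y$ is a nonzero element $\Delta(x)\in\C\{x\}$. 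Hence for $0<|x|<\epsilon$ the polynomial $P(x,\cdot)$ has $m$ distinct roots, each depending holomorphically on $x$ locally.

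\emph{Monodromy.} Fix a base point $x_0$ in the punctured disc $D^*_\epsilon$ and continue the roots around the loop generating $\pi_1(D^*_\epsilon)$. This gives a permutation $\sigma$ of the $m$ roots. The main point is that $\sigma$ is an $m$-cycle. Suppose instead that some $\sigma$-orbit $\{y_1,\dots,y_k\}$ had $k<m$. Then the elementary symmetric functions of $y_1,\dots,y_k$ would be single-valued holomorphic functions on $D^*_\epsilon$. They are bounded, since the roots tend to $0$ as $x\to 0$ by continuity of roots and $c_i(0)=0$. By Riemann's removable singularity theorem they extend holomorphically to $D_\epsilon$. This produces a factor $\prod_{j\le k}(y-y_j)\in\C\{x\}[y]$ of $P$ of degree $k<m$, contradicting irreducibility.

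\emph{Construction of the parametrization.} Substitute $x=t^m$ on the punctured disc $0<|t|<\epsilon^{1/m}$ and follow one root $y_1$ along the lifted path. Going once around $t=0$ corresponds to going $m$ times around $x=0$, which returns $y_1$ to itself because $\sigma^m=\id$. So $\psi(t):=y_1(t^m)$ is a single-valued bounded holomorphic function on the punctured disc, extending holomorphically with $\psi(0)=0$. Write $\psi=\sum a_i t^i$. Then $P(t^m,\psi(t))=0$, and $(t^m,\psi(t))$ is a parametrization of the branch.

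\emph{Injectivity (good parametrization).} Suppose $(t^m,\psi(t))=(s^m,\psi(s))$ with $t\neq s$ both small. If $t=0$ then $s^m=0$, so $s=0$; hence we may assume $t,s\neq 0$. Then $s=\zeta t$ with $\zeta^m=1$ and $\zeta\ne 1$. Now $\psi(\zeta t)$ is the value at $x=t^m$ of $\sigma^j(y_1)$, where $\zeta=e^{2\pi i j/m}$ with $0<j<m$. Because $\sigma$ is an $m$-cycle, $\sigma^j(y_1)\ne y_1$. The roots of $P(x,\cdot)$ are distinct for $x\neq 0$, so $\sigma^j(y_1)(x)\neq y_1(x)$, that is, $\psi(\zeta t)\ne\psi(t)$. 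This contradiction shows the map is injective near $0$.

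The only delicate step is showing that the monodromy is a single $m$-cycle. That is where irreducibility enters, through the removable-singularity argument.
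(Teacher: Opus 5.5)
Your proof is correct, and it takes a genuinely different route from the one the paper points to.

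The paper gives no argument of its own; it cites Wall, Sections 2.1--2.2. As I recall, Wall's treatment is algebraic. The Newton polygon is used to build the fractional power series root term by term as a formal object. The analytic input (Weierstrass preparation, convergence of the series) is then supplied separately.

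You instead give the classical complex-analytic proof:
\begin{itemize}
\item Weierstrass preparation and a nonvanishing discriminant give an unramified $m$-sheeted covering over the punctured disc;
\item irreducibility, via the removable singularity theorem, forces the monodromy to be a single $m$-cycle;
\item pulling back along $t\mapsto t^m$ produces the parametrization.
\end{itemize}

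What your route buys is that the single $m$-cycle delivers three things at once: convergence, the value $m=\ord_y h(0,y)$ (the intersection multiplicity of the branch with $\{x=0\}$), and injectivity of $t\mapsto(t^m,\psi(t))$. The cost is that it relies on covering-space and removable-singularity arguments specific to $\C$.

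The Newton polygon route is purely algebraic and constructive, and it works formally over any algebraically closed field of characteristic zero, exhibiting the exponents explicitly. Its price is that convergence, and the goodness of the parametrization, must be checked as separate steps. For goodness one takes $m$ minimal, so that $m$ and the exponents with nonzero coefficients have no common factor.

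Two steps in your write-up are terse but routine:
\begin{itemize}
\item The monic factor $\prod_{j\le k}(y-y_j)$ really divides $P$ in $\C\{x\}[y]$; this follows from monic division and the identity theorem.
\item The monodromy based at the point $t^m$ is conjugate to $\sigma$, hence still an $m$-cycle; this is what your injectivity step uses.
\end{itemize}
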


\subsection{Intersection multiplicities} Let $\Gamma_1$ and $\Gamma_2$ be two plane curves. Suppose that $\Gamma_1$ is a branch given by a good parametrization $(\phi(t),\psi(t))$, and that $\Gamma_2$ is defined by a convergent power series $h\in \mathbb C\{x,y\}$. The intersection multiplicity of $\Gamma_1$ and $\Gamma_2$ at $O$ is defined by
\[
\mult_O(\Gamma_1,\Gamma_2):=\operatorname{ord}_t\bigl(h(\phi(t),\psi(t))\bigr),
\]
where $\operatorname{ord}_t\bigl(h(\phi(t),\psi(t))\bigr)$ denotes the order of vanishing at $0$ of the power series $h(\phi(t),\psi(t))$. By \cite[Lemma 2.3.2]{Wal04}, this definition is independent of the choice of good parametrization of $\Gamma_1$.

More generally, if $\Gamma_1$ is an arbitrary plane curve with branches $\Gamma_{1,1},\dots,\Gamma_{1,r}$, we define
\[
\mult_O(\Gamma_1,\Gamma_2):=\sum_{j=1}^r mult_O(\Gamma_{1,j},\Gamma_2).
\]
Although this definition is asymmetric, intersection multiplicity is symmetric: for any two plane curves $C_1$ and $C_2$, one has
\[
\mult_O(C_1,C_2)=\mult_O(C_2,C_1).
\]
See \cite[Lemma 1.2.1]{Wal04}.

\section{Functional Transcendence of B\"ottcher Coordinates} \label{sec:Bottcher}

The aim of this section is to establish a functional transcendence result for the B\"ottcher coordinate of a non-exceptional polynomial. More precisely, we study a special class of \emph{bialgebraic} curves, that is, algebraic curves whose images under the B\"ottcher coordinate change are also algebraic. Our result may be viewed as a generalization of \cite[Lemmas 5.9 and 6.11]{Sch23}. We refer the reader to \cite{saleh-26} for a more general treatment of bialgebraic varieties under B\"ottcher coordinates, and to \cite{nguyen-height-transcendence,Xie-transcendence} for results concerning algebraic dependence among B\"ottcher values. Our arguments draw heavily on ideas developed in \cite{Sch23} and \cite{saleh-26}. We begin by recalling the definition of B\"ottcher coordinates and then prove the main result of this section, Theorem~\ref{BottcherTheorem1}.

Let $f$ be a polynomial of degree $d\geq 2$, and let $B_\infty(f)$ denote the basin of infinity of $f$, that is, the set of points in $\mathbb P^1$ whose forward orbits under $f$ tend to infinity. Böttcher's theorem \cite[Theorem 9.1]{milnor} states that there exist an open subset $U_\infty(f)\subset B_\infty(f)$ and a biholomorphism
\[
\Phi_f:U_\infty(f)\longrightarrow \mathbb D_r
\]
for some $r\leq 1$, such that
\[
\Phi_f(f(z))=\Phi_f(z)^d
\]
for all $z\in U_\infty(f)$.

Similarly, let $g$ be a polynomial, and let $\alpha$ be a superattracting point of $g$ of multiplicity $e\geq 2$, not necessarily equal to $\infty$. Then there exist a neighborhood $U_\alpha(g)$ of $\alpha$ and a Böttcher coordinate $\Phi_g$, which is a biholomorphism from $U_\alpha(g)$ onto $\mathbb D_{r'}$ for some $r'\leq 1$, satisfying
\[
\Phi_g(g(z))=\Phi_g(z)^e.
\]
Let $\Psi_f$ and $\Psi_g$ denote the inverses of $\Phi_f$ and $\Phi_g$, respectively. Then $\Psi_f$ and $\Psi_g$ satisfy the functional equations
\begin{equation}
\label{eqn:bottcher-inv-eqns}
f(\Psi_f(z))=\Psi_f(z^d), \qquad g(\Psi_g(z))=\Psi_g(z^e).
\end{equation}

We aim to prove the following theorem.

\begin{theorem} \label{BottcherTheorem1}
Assume that $f$ is not an exceptional polynomial. Let $\theta \in \bb{C}^{\ast}$ be a non-zero constant and $i,j \ge 1$ be integers. Assume there exists an irreducible $P \in \bb{C}[x,y] \setminus \{0\}$ such that 
$$P(\Psi_f(\theta z^i), \Psi_g(z^j)) = 0$$
for all $z \in \D_{r''}$ for some $0 < r'' \le \min\{r,r'\}$. Then $\theta$ is a root of unity, and if $C$ is the curve defined by $\{P = 0\}$, then $C$ is preperiodic under $(f^a,g^b)$ for some integers $a,b \geq 1$. 
\end{theorem}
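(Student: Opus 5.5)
The idea is to exploit the functional equations \eqref{eqn:bottcher-inv-eqns} to generate infinitely many algebraic relations from one, and then to invoke rigidity results for non-exceptional polynomials. This is in the spirit of \cite[Lemmas 5.9 and 6.11]{Sch23} and \cite{saleh-26}.

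Set $\gamma(z)=(\Psi_f(\theta z^i),\Psi_g(z^j))$, a local analytic branch of $C=\{P=0\}$. Applying $\varphi^n=(f^n,g^n)$ and using $f^n\circ\Psi_f(w)=\Psi_f(w^{d^n})$ and $g^n\circ\Psi_g(w)=\Psi_g(w^{e^n})$ gives
\[
\varphi^n(\gamma(z))=\bigl(\Psi_f(\theta^{d^n}z^{id^n}),\Psi_g(z^{je^n})\bigr).
\]
So $\varphi^n(C)$ contains an analytic branch of the same shape.

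\textbf{Step 1: $\theta$ is a root of unity.} Consider the analytic curve $\{(\theta w^{i},w^{j})\}$ in Böttcher coordinates; $C$ is its image under $\Psi=(\Psi_f,\Psi_g)$. I would look at the stabilizer of $C$ under the symmetries that preserve such curves. Any rotation $z\mapsto \zeta z$ changes $\theta$ to $\theta\zeta^{i}$ while keeping the form. Compose with $\Psi_g$'s symmetry group: $\Psi_g(\mu w)$ for $\mu^{e-1}=1$ ... The cleaner route is to combine iterates. Using the branch of $\varphi^n(C)$, reparametrize $z\mapsto z^{e^m}$ on the branch of $\varphi^{n}(C)$ and compare it with a branch of a higher iterate. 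This gives algebraic relations $P_{n}(\Psi_f(\theta_n w^{i_n}),\Psi_g(w^{j_n}))=0$, with exponents in ratio $i d^n/(j e^n)$. Taking fibered products of two such relations via the common $\Psi_g$-coordinate yields algebraic relations between $\Psi_f(\theta' u^{k})$ and $\Psi_f(\theta'' u^{k'})$. That is, the curve $\{(\Psi_f(\theta'x^k),\Psi_f(\theta''x^{k'}))\}$ is contained in an algebraic curve in $\mathbb P^1\times\mathbb P^1$ (after checking that the correspondence is not degenerate).

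The key input is the result for a single non-exceptional polynomial, which I would prove (or cite from \cite{Sch23}) by a Ritt/Medvedev–Scanlon type argument. It says that algebraic relations between $\Psi_f(a x^k)$ and $\Psi_f(b x^{k'})$ force $\Psi_f(a x^k)$ and $\Psi_f(b x^{k'})$ to be related through polynomials commuting with an iterate of $f$. Since $f$ is non-exceptional, its symmetry group $\Sigma(f)$ of linear maps is finite and the centralizer consists of iterates composed with symmetries. In Böttcher coordinates, such polynomials act by $w\mapsto \zeta w^{d^m}$ with $\zeta$ a root of unity. Matching coefficients in these relations forces $\theta^{d^n-d^{n'}}$, or a comparable expression, to lie in a finite group of roots of unity, hence $\theta$ is a root of unity.

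\textbf{Step 2: preperiodicity.} Once $\theta$ is a root of unity of order $N$, choose $a$ such that $\theta^{d^{a}}=\theta$ in a stable way, e.g. with $d^{a}\equiv 1$ on the $N$-torsion part after first replacing $C$ by an iterate to kill the part of $N$ sharing primes with $d$. The branches of $\varphi^n(C)$ then have the form $(\Psi_f(\theta_n z^{i_n}),\Psi_g(z^{j_n}))$, with only finitely many values of $\theta_n$. Using a degree bound, I would show that $(f^a,g^b)^k(C)$ has boundedly many possibilities for the local branch after normalization. Here $b$ is chosen so that the exponent ratio $i d^{ak}/(j e^{bk})$ is constant, using that $\Psi_f(\theta z^{i})$ and $\Psi_f(\theta z^{i d^a})$ differ by reparametrization. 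Concretely, choose $a,b$ with $i d^{a}/(j e^{b})$ equal to a reduced ratio that recurs; this is possible since the ratios $d^{a}/e^{b}$ modulo rescaling the parameter reduce the ratio to a finite set, because the branch $\{(\Psi_f(\theta z^{p}),\Psi_g(z^{q}))\}$ depends only on $\theta$ and $p/q$ up to $q$-th roots of unity. By Fact~\ref{fact:good-param-det-branch} a branch determines the irreducible curve containing it, so $(f^a,g^b)^{k}(C)=(f^a,g^b)^{k'}(C)$ for some $k\ne k'$, which is preperiodicity.

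The main obstacle is Step 1, specifically the rigidity lemma: algebraic relations among $\Psi_f(\theta x^k)$-type functions for non-exceptional $f$ must come from symmetries and iterates. I would attempt it via monodromy/analytic continuation of $\Psi_f$ toward the Julia set, where the natural boundary prevents continuation. Alternatively I would use Medvedev–Scanlon's classification of $(f,f)$-invariant curves, applied to the Zariski closure of the orbit of the relation curve. Ensuring that the fibered-product relation is nondegenerate is also delicate.
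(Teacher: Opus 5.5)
Your elimination of the common $\Psi_g$-coordinate in Step 1 is the same move as the paper's Lemma~\ref{BottcherLemma1}, and your recurrence in Step 2 is the paper's final argument. However, Step 2 relies on something that is not automatic. It needs $d^{a}=e^{b}$ for some $a,b\ge 1$, where $e$ is the local degree of $g$ at $\alpha$. Under $(f^a,g^b)^k$ your germ becomes $\bigl(\Psi_f(\theta^{d^{ak}}z^{id^{ak}}),\Psi_g(z^{je^{bk}})\bigr)$, whose exponent ratio is $\frac{i}{j}(d^a/e^b)^k$. If $d$ and $e$ are multiplicatively independent (e.g.\ $d=2$, $e=3$), these ratios are pairwise distinct, so the germs at the fixed point $(\infty,\alpha)$ are pairwise distinct. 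Finitely many algebraic curves have only finitely many branches at a point, so no $(f^a,g^b)$-orbit of $C$ could then be finite. Hence your claim that ``the ratios $d^a/e^b$ modulo rescaling'' form a finite set is equivalent to multiplicative dependence of $d$ and $e$. That is a nontrivial consequence of the hypothesis and has to be proved.

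The paper proves it in Lemma~\ref{BottcherLemma3} via Theorem~\ref{ScanlonTheorem1}. Once $\theta$, hence $\theta_2$, is a root of unity, push $R(\Psi_f(\theta_2 z^d),\Psi_f(z^e))=0$ forward by $(f^n,f^n)$, where $\theta_2^{d^m}=\theta_2^{d^n}$. This gives an ample $(f^{m-n},f^{m-n})$-invariant curve, hence a graph $x=L\circ h^l(y)$ (or its transpose). Comparing orders at $0$ in $\Psi_f(\theta_2^{d^n}y^d)=L\bigl(\Psi_f(y^{e(\deg h)^l})\bigr)$ yields $d=(\deg h)^l e$ with $(\deg h)^r=d^{m-n}$. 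Only then can you choose $d^{l_1}=e^{l_2}$ and run your recurrence with $(f^{l_1},g^{l_2})$. Your general rigidity lemma for $\Psi_f(ax^k)$ versus $\Psi_f(bx^{k'})$ would also give this by comparing orders, but you never extract it, and Step 2 instead treats it as free.

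Step 1 also leaves its key input unproved, and the routes you suggest do not supply it. In the paper, \cite{Sch23} (Lemmas 5.9 and 6.11) is invoked only for a relation of the shape $P_1(\Psi_f(\theta'z),\Psi_f(z))=0$ with $|\theta'|=1$. So you should first reduce to this equal-exponent shape, as Lemma~\ref{BottcherLemma1} does by pushing forward under $(f,f)$ and eliminating, which gives $\theta'=\theta_2^{d-1}$. The case $|\theta'|<1$ (after swapping coordinates if needed) needs a different input. The paper extracts local connectivity of $J_f$ from the proof of \cite[Claim 8.3]{saleh-26}, then applies \cite[Theorem 1.1]{saleh-26} to conclude that the relation curve is $(f,f)$-preperiodic, which is incompatible with $|\theta'|<1$. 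Your natural-boundary idea is only a heuristic for this; making it work is exactly what those cited results do.

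Your alternative, applying Medvedev--Scanlon to the Zariski closure of the $(f,f)$-orbit of the relation curve, cannot show that $\theta'$ is a root of unity. If it is not, the images contain the pairwise distinct germs $(\Psi_f((\theta')^{d^n}z),\Psi_f(z))$, so the orbit is infinite and its Zariski closure is all of $(\bP^1)^2$. Medvedev--Scanlon only becomes usable after the root-of-unity statement is known, which is exactly where the paper uses it.
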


The main strategy of the proof of Theorem \ref{BottcherTheorem1} is to reduce to the setting of $f = g$ and $\alpha = \infty$. For example, when $f=g$, $\alpha=\infty$, $|\theta|=1$, and $i=j$, Theorem \ref{BottcherTheorem1} follows readily from the results of Schmidt \cite[Lemmas 5.9 and 6.11]{Sch23}. Also, in the case $|\theta|\neq 1$, one shows that the Julia set of $f$ must be either disconnected or locally connected, which allows us to deduce the theorem from \cite[Theorem 1.1]{saleh-26}. 
\par 
From now on in this section, we shall assume we are in the situation of Theorem \ref{BottcherTheorem1}. Hence $f$ is a non-exceptional polynomial, and there exists an irreducible polynomial 
$P \in \bb{C}[x,y]$ along with a non-zero $\theta$ and positive integers $i,j$ such that $$P(\Psi_f(\theta z^i), \Psi_g(z^j)) = 0$$ 
for all $z$ in some neighbourhood $U$ of $0$. Our first step is to produce a relation between $\Psi_f( \theta' z)$ and $\Psi_f(z)$ for some $\theta'$.

\begin{lemma} \label{BottcherLemma1}
Let $\theta_1$ satisfy $\theta_1^e = \theta^{-1}$ and let $\theta_2 = (\theta \theta_1)^d$. Then there exists a non-zero polynomial $P_1 \in \bb{C}[x,y]$ such that
$$P_1(\Psi_f( \theta_2^{d-1}z), \Psi_f(z)) = 0$$
for all $z$ in a sufficiently small neighbourhood of $0$.
\end{lemma}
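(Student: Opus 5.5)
The plan is to work with the equivalence relation ``$A \sim B$ iff $A$ and $B$ are algebraically dependent over $\C$'' on nonconstant germs of meromorphic functions at $0$. This relation is transitive on nonconstant functions because the field of functions algebraic over $\C(A)$ is algebraically closed in the ambient field. Two elementary sources of $\sim$ will be used repeatedly.

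First, for any nonconstant germ $x(v)$ with $x(0)=0$, the functional equations \eqref{eqn:bottcher-inv-eqns} give $f(\Psi_f(x)) = \Psi_f(x^d)$. Since $f$ is a nonconstant polynomial, this yields $\Psi_f(x) \sim \Psi_f(x^d)$, and likewise $\Psi_g(y) \sim \Psi_g(y^e)$.

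Second, the hypothesis $P(\Psi_f(\theta z^i),\Psi_g(z^j))=0$ gives $\Psi_f(\theta z^i) \sim \Psi_g(z^j)$. Both sides are nonconstant because $\Psi_f,\Psi_g$ are biholomorphisms. Moreover $P$, being irreducible and vanishing on a nonconstant pair, must involve both variables; otherwise one of the two functions would be constant.

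\emph{Step 1: the basic relation.} Chaining these facts gives
\[
\Psi_f(\theta z^{ie}) \sim \Psi_g(z^{je}) = g(\Psi_g(z^j)) \sim \Psi_g(z^j) \sim \Psi_f(\theta z^i) \sim f(\Psi_f(\theta z^i)) = \Psi_f(\theta^d z^{id}).
\]
Here the first $\sim$ is the hypothesis applied at $z^e$ in place of $z$. Both ends are functions of $u=z^i$. Since $z\mapsto z^i$ is open near $0$, the algebraic relation descends to an identity in $u$, so
\[
R(u):\quad \Psi_f(\theta u^e) \sim \Psi_f(\theta^d u^d).
\]

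\emph{Step 2: normalizing constants.} Substitute $u=\theta_1 v^d$ into $R$. Since $\theta\theta_1^e=1$ and $\theta^d\theta_1^d=\theta_2$, this gives $\Psi_f(v^{de}) \sim \Psi_f(\theta_2 v^{d^2})$. Replacing $v$ by $v^d$ gives $\Psi_f(v^{d^2e}) \sim \Psi_f(\theta_2 v^{d^3})$.

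On the other hand, using the first fact ($\Psi_f(x)\sim\Psi_f(x^d)$) with $x=v^{de}$ and then with $x=\theta_2 v^{d^2}$, we get
\[
\Psi_f(v^{d^2e}) \sim \Psi_f(v^{de}) \sim \Psi_f(\theta_2 v^{d^2}) \sim \Psi_f(\theta_2^d v^{d^3}).
\]
Combining the two chains,
\[
\Psi_f(\theta_2 v^{d^3}) \sim \Psi_f(\theta_2^d v^{d^3}).
\]

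\emph{Step 3: change of variable.} Set $z=\theta_2 v^{d^3}$, so that $\theta_2^d v^{d^3}=\theta_2^{d-1}z$. Again $v\mapsto z$ is open near $0$, so the algebraic relation holds as an identity in $z$ on a neighbourhood of $0$. This produces the required nonzero $P_1$ with $P_1(\Psi_f(\theta_2^{d-1}z),\Psi_f(z))=0$.

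The main point needing care is the bookkeeping in Steps 1 and 3: making each composite relation a genuine nonzero polynomial identity on a small disc. This requires that all the functions involved are nonconstant, so that transitivity applies, and that the arguments of $\Psi_f,\Psi_g$ stay inside $\D_{r}$ and $\D_{r'}$. Both are handled by shrinking the neighbourhood and using that power maps are open at $0$. Transitivity itself is standard: eliminate via resultants, or equivalently note that the algebraic closure of $\C(A)$ in the field of meromorphic germs has transcendence degree $1$.
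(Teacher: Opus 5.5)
Your proposal is correct and follows essentially the same route as the paper. You first relate $\Psi_f(\theta u^e)$ and $\Psi_f(\theta^d u^d)$, then normalize via $\theta_1$. Next you push forward by $f$ and compare with a $d$-th power substitution to get $\Psi_f(\theta_2 w)\sim\Psi_f(\theta_2^d w)$, exactly what the paper does with the image curves $\varphi(C)$ and $(f,f)(\{R=0\})$. The only differences are your explicit justification of transitivity of algebraic dependence (which the paper uses tacitly) and the harmless extra substitution $u=\theta_1 v^d$ in place of $u=\theta_1 v$.
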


\begin{proof}
Let $\vphi = (f,g)$ and let $C$ be the curve defined by $\{P = 0\}$. Then $C$ is an irreducible curve and so is the image curve $\vphi(C)$. Thus there exists some irreducible polynomial $Q(x,y)$ such that $\vphi(C) = \{Q = 0\}$. Then for all $z \in U$, where we possibly shrink $U$, we have 
\begin{equation} \label{eq:Bottcher1}
Q(f(\Psi_f( \theta z^i)), g(\Psi_g(z^j))) = 0 \implies Q(\Psi_f(\theta^d z^{id}), \Psi_g(z^{ej})) = 0.
\end{equation}
Substituting in $z^e$ for $z$ in $P(\Psi_f(\theta z^i), \Psi_g(z^j)) = 0$, we see that $\Psi_f(\theta z^{ei})$ and $\Psi_g(z^{ej})$ are algebraically dependent. Since equation \eqref{eq:Bottcher1} tells us that $\Psi_f(\theta^d z^{id})$ and $\Psi_g(z^{ej})$ are algebraically dependent, we get that $\Psi_f(\theta^d z^{id})$ and $\Psi_f(\theta z^{ei})$ are algebraically dependent and thus so is $\Psi_f(\theta^d z^d)$ and $\Psi_f(\theta z^e)$. Hence there exists a non-zero $R \in \bb{C}[x,y]$ such that
$$R(\Psi_f(\theta^d z^{d}), \Psi_f(\theta z^{e})) = 0.$$
Now choose $\theta_1$ such that $\theta_1^{e} = \theta^{-1}$ and substitute in $\theta_1z$ into $z$. This gives us
$$R(\Psi_f( (\theta \theta_1)^d z^{d}), \Psi_f(z^{e})) = 0.$$
Letting $\theta_2 = (\theta \theta_1)^d$ gives
\begin{equation} \label{eq:Bottcher2}
R(\Psi_f(\theta_2 z^d), \Psi_f(z^e)) = 0.
\end{equation}

Again if we let $C$ be the curve defined by $\{R = 0 \}$ and we let $(f,f)(C)$ be defined by $\{S = 0 \}$, we get
$$S(f(\Psi_f(\theta_2 z^d), f(\Psi_f(z^e))) = 0 \implies S(\Psi_f( (\theta_2 z^d)^d), \Psi_f(z^{ed})) = 0.$$
Hence $\Psi_f( \theta_2^{d} z^{d^2})$ and $\Psi_f(z^{ed})$ are algebraically dependent. Substituting in $z^d$ for $z$ in \eqref{eq:Bottcher2}, we get that $\Psi_f(\theta_2 z^{d^2})$ and $\Psi_f(z^{ed})$ are algebraically dependent. Thus $\Psi_f( \theta_2^d z^{d^2})$ and $\Psi_f(\theta_2 z^{d^2})$ are algebraically dependent. Replacing $z^{d^2}$ with $z$ gives us an algebraic relation between $\Psi_f(\theta_2^d z)$ and $\Psi_f(\theta_2 z)$ and finally replacing $\theta_2 z$ with $z$ gives us a relation between $\Psi_f(\theta_2^{d-1}z)$ and $\Psi_f(z)$ as desired.
\end{proof}

Let $\theta' := \theta_2^{d-1}$. Next, we show that $\theta'$ and hence $\theta$ must be a root of unity.

\begin{lemma} \label{BottcherLemma2}
$\theta$ is a root of unity.    
\end{lemma}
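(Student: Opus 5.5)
From Lemma \ref{BottcherLemma1} we have a nonzero polynomial $P_1$ with $P_1(\Psi_f(\theta' z),\Psi_f(z))=0$ near $0$, where $\theta'=\theta_2^{d-1}$. The plan is to show first that $\theta'$ is a root of unity, and then to trace this back to $\theta$ through the explicit relations $\theta_1^e=\theta^{-1}$ and $\theta_2=(\theta\theta_1)^d$.

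\textbf{Step 1: $|\theta'|=1$.} Suppose instead that $|\theta'|<1$; the case $|\theta'|>1$ is symmetric after substituting $z\mapsto z/\theta'$. Iterating the relation gives algebraic dependence between $\Psi_f(\theta'^k z)$ and $\Psi_f(z)$ for every $k$. Applying $(f,f)$ and using $f\circ\Psi_f(z)=\Psi_f(z^d)$ propagates the relation to $\Psi_f(\theta'^{d^n}z^{d^n})$ versus $\Psi_f(z^{d^n})$. The natural tool here is the Green function: with $G_f(\Psi_f(w))=-\log|w|$ (up to normalization), a dependence $P_1(x,y)=0$ forces, for $x,y$ near $\infty$, comparable growth $|x|^{a}\asymp|y|^{b}$ along the curve. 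Here $a,b$ are determined by the Puiseux branches of $\{P_1=0\}$ at $(\infty,\infty)$. This gives $G_f(x)/G_f(y)\to b/a$, a rational constant. On the other hand, $G_f(\Psi_f(\theta' z))-G_f(\Psi_f(z))=-\log|\theta'|$ is a nonzero additive constant while both terms blow up, so the ratio tends to $1$. That alone is not a contradiction. The usable consequence is that the algebraic curve $\{P_1=0\}$ contains a branch at infinity on which $G_f(x)-G_f(y)$ is a nonzero constant. Combined with invariance under $(f,f)$, this makes the curve's iterates have bounded degree while that constant is multiplied by $d$ at each step. Such a curve must be a component of a fibre of $G_f(x)-G_f(y)$. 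By the paper's remark, one then shows the Julia set is disconnected or locally connected and invokes \cite[Theorem 1.1]{saleh-26}, whose classification of bialgebraic curves for non-exceptional $f$ excludes $|\theta'|\neq 1$.

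\textbf{Step 2: $\theta'$ is a root of unity.} Now $|\theta'|=1$ and the map $z\mapsto\theta' z$ is a rotation. The relation says that the curve $\{P_1=0\}$ is bialgebraic with respect to $(\Psi_f,\Psi_f)$ and contains the image of the line $y'=\theta' x'$ in B\"ottcher coordinates. Here I invoke Schmidt's results \cite[Lemmas 5.9 and 6.11]{Sch23} for the case $f=g$, $\alpha=\infty$, $i=j$. For a non-exceptional $f$, such a curve must be $(f,f)$-preperiodic, and the rotation $\theta'$ must lie in the finite group of B\"ottcher-coordinate symmetries of some iterate. The underlying reason is that $\Psi_f(\theta' z)$ is algebraic over $\Psi_f(z)$ only when $\theta'$ belongs to the symmetry group of $J(f)$ up to iterate, and that group is finite because $f$ is non-exceptional. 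In either formulation, $\theta'$ is a root of unity.

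\textbf{Step 3: descend to $\theta$.} From $\theta'=\theta_2^{d-1}$ we get that $\theta_2$ is a root of unity. Since $\theta_2=(\theta\theta_1)^d$, the product $\theta\theta_1$ is a root of unity. Raising to the power $e$ and using $\theta_1^e=\theta^{-1}$ gives $(\theta\theta_1)^e=\theta^{e-1}$. Hence $\theta^{e-1}$ is a root of unity, and since $e\ge2$, so is $\theta$.

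The main obstacle is Step 1, ruling out $|\theta'|\neq1$. That is where the reduction to \cite{saleh-26} requires verifying its topological hypothesis on $J(f)$. I would try first to show that a relation with $|\theta'|<1$ forces $J(f)$ to be disconnected; otherwise I would derive local connectivity from the existence of the algebraic correspondence.
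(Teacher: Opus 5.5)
Your argument follows the paper's route: reduce to $\theta'$, rule out $|\theta'|<1$ with \cite[Theorem 1.1]{saleh-26}, and settle the case $|\theta'|=1$ with \cite[Lemmas 5.9 and 6.11]{Sch23}. Your Step 3 correctly spells out the multiplicative dependence that the paper only asserts; indeed $\theta'^{\,e}=\theta^{(e-1)d(d-1)}$.

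The step you leave open, the topological hypothesis on $J_f$, is not proved in the paper either; it is supplied by citation. Since $|\theta'|\le 1$, the relation extends from a small disc to all of $\mathbb{D}_r$. The proof of \cite[Claim 8.3]{saleh-26} then gives that $J_f$ is locally connected, so you should aim for local connectivity rather than disconnectedness. \cite[Theorem 1.1]{saleh-26} then makes $\{P_1=0\}$ preperiodic under $(f,f)$, and this contradicts $|\theta'|<1$.

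In your Green-function paragraph, drop the phrase ``invariance under $(f,f)$'' and the unproved claim that the curve is a fibre component of $G_f(x)-G_f(y)$. The curve is not invariant, and neither statement is needed. What closes the argument is the following: the $n$-th image of $\{P_1=0\}$ contains the branch $(\Psi_f(\theta'^{\,d^n}w),\Psi_f(w))$, on which $G_f(x)-G_f(y)=-d^n\log|\theta'|$. These constants are pairwise distinct when $0<|\theta'|<1$. A preperiodic curve has only finitely many images, each with finitely many branches at $(\infty,\infty)$, so it can carry only finitely many such constants. This is the contradiction the paper uses.
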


\begin{proof}
Note that since $\theta'$ is multiplicatively dependent with respect to $\theta$ by definition, it suffices to show that $\theta'$ is a root of unity. By Lemma \ref{BottcherLemma1} we have 
\[
P_1(\Psi_f(\theta'z), \Psi_f(z)) = 0,
\]
for all $z \in D_\epsilon$ for some $\epsilon > 0$. We may assume by symmetry that $|\theta'| \le 1$. Since the relation holds for every $z \in D_\epsilon$ then it must also hold for every $z \in \D_r$. It suffices to show that $|\theta'| = 1$ and then the lemma follows from \cite[Lemmas 5.9 and 6.11]{Sch23}. Therefore, we assume for the sake of contradiction that $|\theta'| < 1$. It follows from the proof of \cite[Claim 8.3]{saleh-26} that $J_f$ must be locally connected. Therefore, by \cite[Theorem 1.1]{saleh-26} we conclude that the curve defined by $\{P_1 = 0\}$ is $f$-special in the sense of \cite{saleh-26}. In fact, it must be preperiodic under $(f,f)$. This is impossible since $|\theta'| < 1$. This contradiction shows $|\theta'| = 1$ and we conclude the proof from \cite[Lemmas 5.9 and 6.11]{Sch23}.  
\end{proof}

We recall the following theorem of Medvedev and Scanlon.

\begin{theorem}[Theorem 6.24, \cite{MS14}] \label{ScanlonTheorem1}
Let $f$ be a non-exceptional polynomial. Then if $C \subseteq \bb{A}^2$ is a curve whose Zariski closure in $\bP^1 \times \bP^1$ is ample and is invariant under $(f,f)$, there exists $h$ and a linear polynomial $L$ such that
$$C = \{y = L \circ h^{l}(x)\} \text{ or } \{x = L \circ h^{l}(y) \} \text{ for some } l \geq 0,$$
where $h^r = f$ for some $r$ and $L$ commutes with a compositional power of $f$.
\end{theorem}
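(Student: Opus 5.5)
Since this statement is quoted from \cite[Theorem 6.24]{MS14}, in the paper it suffices to cite it. If I had to reprove it, I would reduce it to a semiconjugacy problem for polynomials and then use Ritt's theory of polynomial decompositions. The plan is to normalize the curve, show its normalization is $\mathbb{P}^1$ carrying a polynomial self-map, and then compare the two coordinate projections.

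\emph{Step 1: lift the dynamics to the normalization.} Let $\nu:\widehat C\to \overline{C}$ be the normalization of the closure of $C$, and let $\pi_1,\pi_2:\widehat C\to\mathbb{P}^1$ be the two coordinate projections composed with $\nu$. Ampleness means both $\pi_i$ are nonconstant. Invariance under $(f,f)$ gives a self-map $\sigma:\widehat C\to\widehat C$ with
\[
f\circ\pi_1=\pi_1\circ\sigma,\qquad f\circ\pi_2=\pi_2\circ\sigma.
\]
Comparing degrees shows $\deg\sigma=d\ge 2$.

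\emph{Step 2: show $\widehat C\cong\mathbb{P}^1$ and $\sigma$ is a polynomial.} Since $C\subseteq\mathbb{A}^2$ and $\infty$ is totally invariant for $f$, the finite set $\pi_1^{-1}(\infty)$ is totally invariant for $\sigma$. Riemann--Hurwitz then gives three conclusions:
\begin{itemize}
\item $\widehat C$ has genus at most $1$;
\item a nonempty totally invariant finite set forces genus $0$;
\item such a set has at most two points.
\end{itemize}
If it had exactly two points, $\sigma$ would be conjugate to $z^{\pm d}$, and the semiconjugacy $f\circ\pi_1=\pi_1\circ\sigma$ would make $f$ exceptional. So $\pi_1^{-1}(\infty)$ is a single point, and after a coordinate change $\sigma$ is a polynomial. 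The same holds for $\pi_2$. Hence $\pi_1,\pi_2$ are polynomials, and $C$ is the image of $t\mapsto(\pi_1(t),\pi_2(t))$.

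\emph{Step 3: apply Ritt theory.} We now have polynomial semiconjugacies $f\circ\pi_i=\pi_i\circ\sigma$ with $f$ non-exceptional. Ritt theory, in the form of Medvedev--Scanlon's skew-twist and decomposition analysis or Pakovich's description of semiconjugacies, says that $\pi_1$ and $\pi_2$ each factor through a compositional root $h$ of $f$ (with $h^r=f$), up to a linear change of variable. The target is to show $\pi_2=L\circ h^{l}\circ\pi_1$ or $\pi_1=L\circ h^{l}\circ\pi_2$. Substituting either form back into the invariance relation forces $L$ to commute with an iterate of $f$, which is exactly the shape asserted in Theorem \ref{ScanlonTheorem1}.

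\emph{Main obstacle.} I expect Step 3 to be the hard part. The required input is a "common right factor" statement: two semiconjugacies from the same $\sigma$ to the same non-exceptional $f$ must be related by an iterate of a root of $f$. Establishing this needs the full Ritt move analysis and invariance of decomposition length, and it is precisely the content of \cite{MS14}. I would therefore cite it rather than reprove it.
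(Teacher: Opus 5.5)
Your proposal is correct and takes the same approach as the paper: the paper gives no proof of this statement and simply cites \cite[Theorem 6.24]{MS14}. Your Steps 1--2 are a sound outline of the standard reduction to polynomial semiconjugacies $f\circ\pi_i=\pi_i\circ\sigma$, and you rightly identify Step 3 as the part that must be cited. The one point worth adding is why $\pi_1^{-1}(\infty)=\pi_2^{-1}(\infty)$: two distinct totally invariant points would again force $\sigma$ to be conjugate to $z^{\pm d}$.
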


This along with Lemma \ref{BottcherLemma2} allows us to prove that the degrees $d$ and $e$ are multiplicatively dependent.

\begin{lemma} \label{BottcherLemma3}
There exists $l_1,l_2 \geq 1$ such that $d^{l_1} = e^{l_2}$. 
\end{lemma}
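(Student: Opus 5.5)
The plan is to reuse the relation $R(\Psi_f(\theta_2 z^d),\Psi_f(z^e))=0$ from the proof of Lemma \ref{BottcherLemma1}, show that the curve it defines is preperiodic under $(f,f)$, apply Theorem \ref{ScanlonTheorem1} to an invariant iterate, and then compare pole orders at $\infty$.

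\textbf{Step 1: the curve $D$ and its ampleness.} Equation \eqref{eq:Bottcher2} gives the relation $R(\Psi_f(\theta_2 z^d),\Psi_f(z^e))=0$. By Lemma \ref{BottcherLemma2}, $\theta$ is a root of unity. Since $\theta_1^e=\theta^{-1}$, $\theta_1$ is also a root of unity, and hence so is $\theta_2=(\theta\theta_1)^d$.
\begin{itemize}
\item Replace $R$ by an irreducible factor that still vanishes on the analytic arc $z\mapsto(\Psi_f(\theta_2 z^d),\Psi_f(z^e))$. Let $D=\{R=0\}$.
\item Both coordinate functions of this arc are non-constant near $0$. So $D$ is neither a horizontal nor a vertical line, and its closure in $\bP^1\times\bP^1$ is ample.
\item More generally, for a root of unity $\zeta$, let $D_\zeta$ denote the irreducible curve containing the arc $w\mapsto(\Psi_f(\zeta w^d),\Psi_f(w^e))$. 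By Fact \ref{fact:good-param-det-branch} this arc determines a branch at $(\infty,\infty)$, working in local coordinates $1/x,1/y$. Hence it determines the irreducible algebraic curve containing it, so $D_\zeta$ is well defined.
\end{itemize}

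\textbf{Step 2: $D$ is preperiodic under $(f,f)$.} Iterate the functional equation \eqref{eqn:bottcher-inv-eqns}. The curve $(f,f)^n(D)$ contains the arc
\[
z\mapsto\bigl(\Psi_f(\theta_2^{d^n}z^{d^{n+1}}),\Psi_f(z^{ed^n})\bigr).
\]
Setting $w=z^{d^n}$, this is a reparametrization of the arc defining $D_{\zeta}$ with $\zeta=\theta_2^{d^n}$. Hence $(f,f)^n(D)=D_{\theta_2^{d^n}}$. Since $\theta_2$ is a root of unity, $\theta_2^{d^n}$ takes only finitely many values. Therefore $D$ is preperiodic under $(f,f)$.

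It follows that some iterate $E=D_\zeta$ is invariant under $(f^a,f^a)$ for some $a\ge1$. The curve $E$ is ample, being the image of an ample curve under a finite map. Also $f^a$ is non-exceptional, since $f$ is.

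\textbf{Step 3: apply Theorem \ref{ScanlonTheorem1} and compare pole orders.} Theorem \ref{ScanlonTheorem1}, applied to $f^a$, gives $E=\{y=L\circ h^l(x)\}$ or $E=\{x=L\circ h^l(y)\}$, where $h^r=f^a$ and $L$ is linear.

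We use that $\Psi_f$ has a simple pole at $0$, since it is the inverse of the Böttcher coordinate at $\infty$.
\begin{itemize}
\item Along the arc $(\Psi_f(\zeta w^d),\Psi_f(w^e))$, the coordinate $x$ has a pole of order $d$ at $w=0$.
\item Along the same arc, $y$ has a pole of order $e$.
\end{itemize}
Write $\delta=\deg h$, so that $\delta^r=d^a$. In the first case, comparing pole orders of $y$ and $L(h^l(x))$ gives $e=d\,\delta^l$. Raising to the $r$-th power yields
\[
e^r=d^{r}\delta^{lr}=d^{r+al}.
\]
In the second case one gets symmetrically $d=e\,\delta^l$, and so $d^r=e^{r+al}$. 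In either case $d$ and $e$ are multiplicatively dependent with positive exponents, which is the claim.

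\textbf{Main obstacle.} I expect the delicate point to be Step 2. One must check carefully that the analytic arc really determines a unique irreducible algebraic curve: it is a genuine branch, rather than a curve singular at the point. One must also check that the identification $(f,f)^n(D)=D_{\theta_2^{d^n}}$ is legitimate, including the case where the arc is not injective, i.e.\ the parametrization is not good. In that case I would pass to a good reparametrization $w\mapsto w^{k}$, which does not affect the pole-order ratio used in Step 3.
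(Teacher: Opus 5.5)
Your argument is essentially the paper's proof. In both, the curve cut out by $R$ is shown to be $(f,f)$-preperiodic because $\theta_2$ is a root of unity, Theorem \ref{ScanlonTheorem1} is applied to an invariant image, and the behaviour at $w=0$ is compared. Your pole-order bookkeeping via $\deg(L\circ h^l)=\delta^l$ is slightly cleaner than the paper's, since it avoids the identity $h(\Psi_f(y))=\Psi_f(y^{d'})$.

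There is one arithmetic slip: your second case is not symmetric to the first, because $\delta$ is tied to $d$ rather than to $e$. From $d=e\,\delta^l$ and $\delta^r=d^a$ you get $d^r=e^r d^{al}$, that is, $d^{r-al}=e^r$, not $d^r=e^{r+al}$. The conclusion still holds: $r-al\geq 1$ is forced because $e^r\geq 2$.

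The concern you flag in Step 2 is not a real obstacle. Two distinct irreducible curves meet in only finitely many points, so the infinitely many points on the arc already determine $D_\zeta$ uniquely, and this is exactly how the paper argues. Non-injectivity of the arc when $\gcd(d,e)>1$ is likewise harmless.
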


\begin{proof}
By  \eqref{eq:Bottcher2} in the proof of Lemma \ref{BottcherLemma1}, we can find an irreducible $R \in \bb{C}[x,y]$ such that 
$$R(\Psi_f(\theta_2 z^d), \Psi_f(z^e)) = 0$$
for $\theta_2 = (\theta \theta_1)^d$ where $\theta_1$ satisfies $\theta_1^e = \theta^{-1}$. By Lemma \ref{BottcherLemma2}, we know that $\theta$ is a root of unity. Hence $\theta_2$ is a root of unity too and so we can find $m > n \geq 0$ such that $\theta_2^{d^m} = \theta_2^{d^n}$. Let $T$ be the polynomial that defines the image of $R$ under $(f^n,f^n)$. Then $T$ is irreducible and
\begin{equation} \label{eq:BottcherCurve1}
T(\Psi_f(\theta_2^{d^n}z^{d^{n+1}}), \Psi_f(z^{d^n e})) = 0
\end{equation}
for all $z$ in a small neighbourhood near $0$. Hence $$T(\Psi_f(\theta_2^{d^n} y^d), \Psi_f(y^e)) = 0$$ for all $y$ near $0$. Let $C_1$ be the curve defined by $T$. We claim that $C_1$ is invariant under $(f^{m-n},f^{m-n})$. To see this, let $C_2$ be the image of $C_1$ under $(f^{m-n},f^{m-n})$. Then observe that for all $z$ near $0$, we have that $(\Psi_f(\theta_2^{d^n} z^{d}), \Psi_f(z^{e}))$ is a point on $C_1$. Hence 
$$(f^{m-n}(\Psi_f(\theta_2^{d^n} z^{d})), f^{m-n}(\Psi_f(z^{e}))) = (\Psi_f(\theta_2^{d^m} z^{d^{m-n+1} }), \Psi_f(z^{d^{m-n} e})))$$
lies on $C_2$ for all small enough $z$. But since $\theta_2^{d^m} = \theta_2^{d^n}$, this is simply $(\Psi_f(\theta_2^{d^n} z^{d^{m-n} d}, \Psi_f(z^{d^{m-n}e }))$ which lies on $C_1$ since it is of the form $(\Psi_f(\theta_2^{d^n}y^d), \Psi_f(y^e))$ for $y$ small. Thus $C_1 \cap C_2$ is of infinite size which implies that $C_1 = C_2$ since they are both irreducible. 
\par 
We now apply Theorem \ref{ScanlonTheorem1} to $C_1$. Since it is invariant under $(f^{m-n},f^{m-n})$, we can find a polynomial $h$ of degree $d'$ satisfying $h^r = f^{m-n}$ and a linear endomorphism $L$ such that $C_1$ is given by $\{ x = L \circ h^l(y)\}$ or $\{y = L \circ h^l(x)\}$. Without loss of generality, let us assume that $C_1$ is of the former form. Then by \eqref{eq:BottcherCurve1}, we have
$$\Psi_f( \theta_2^{d^n} y^d ) = L(h^l(\Psi_f(y^e)))$$
for all $y$ near $0$. Since $h^r = f^{m-n}$, they share the same Julia set and Bottcher coordinates. In particular, we have $h(\Psi_f(y)) = \Psi_f(y^{d'})$. Thus we get
$$\Psi_f(\theta_2^{d^n}y^d ) = L\left( \Psi_f (y^{e (d')^l}) \right).$$
In particular by comparing the order of vanishing at $0$, we must have $d = (d')^l e $. Since $h^r = f^{m-n}$, we have $(d')^r = d^{m-n}$ and in particular $d'$ and $d$ are multiplicatively dependent. Hence so is $d$ and $e$ and so we can find $l_1,l_2$ such that $d^{l_1} = e^{l_2}$ as desired. 
\end{proof}

Finally using Lemmas \ref{BottcherLemma2} and \ref{BottcherLemma3}, we can prove Theorem \ref{BottcherTheorem1} 

\begin{proof}[Proof of Theorem \ref{BottcherTheorem1}]
By Lemma \ref{BottcherLemma2}, we know that $\theta$ is a root of unity. By Lemma \ref{BottcherLemma3}, we can find $l_1,l_2 > 0$ such that $d^{l_1} = e^{l_2}$. Since $\theta$ is a root of unity, there exists $m > n$ such that $\theta^{d^{m l_1}} = \theta^{d^{n l_1}}$. Let $C$ be the curve defined by $\{P = 0 \}$ and let $C' = (f^{nl_1},g^{nl_2})(C)$. Then there is an irreducible polynomial $P_2 \in \bb{C}[x,y]$ such that $C'$ is given by $\{P_2 = 0\}$. Then
$$P_2(f^{nl_1}(\Psi_f(\theta z^i)), g^{n l_2}(\Psi_g(z^j))) = 0 \implies P_2(\Psi_f (\theta^{d^{n l_1}} z^{d^{n l_1} i}), \Psi_g(z^{e^{n l_2} j})) = 0$$
for all $z$ in a small neighbourhood near $0$. Since $d^{l_1} = e^{l_2}$, this is equivalent to
$$P_2(\Psi_f(\theta^{d^{n l_1}} y^i), \Psi_g(y^j)) = 0$$
for all $y$ in a small neighbourhood near $0$. We claim that $C'$ is invariant under $(f^{(m-n)l_1}, g^{(m-n)l_2})$. Indeed we have that 
$$\left(f^{(m-n)l_1}(\Psi_f(\theta^{d^{n l_1}}y^i)), g^{(m-n)l_2} (\Psi_g(y^j)) \right) = \left(\Psi_f(\theta^{d^{ml_1}} y^{d^{(m-n)l_1}i}), \Psi_g (y^{e^{(m-n)l_2}j}) \right)$$
lies on $C_2 := (f^{(m-n)l_1}, g^{(m-n)l_2})C'$ for all $y$ near $0$. But since $\theta^{d^{ml_1}} = \theta^{d^{nl_1}}$ and $d^{l_1} = e^{l_2}$, replacing $y^{d^{(m-n)}l_1} = y^{e^{(m-n)}l_2}$ with $y$, this is of the form 
$$\left( \Psi_f( \theta^{d^{nl_1}} y^i), \Psi_g(y^j) \right)$$
and so it lies on $C'$ too. Hence $C' \cap C_2$ is infinite and so $C' = C_2$ as they are both irreducible. Thus $C$ is preperiodic under $(f^{\ell_1},g^{\ell_2})$ as desired.    
\end{proof}

\section{Degree growth for \texorpdfstring{$\vphi^n(C)$}{phi	extasciicircum n(C)}} \label{sec:Degree}

Given a curve $C \subseteq (\bb{P}^1)^2$, let $(a,b)$ be its bidegree and let $(a_n,b_n)$ be the bidegree of $\vphi^n(C)$. Our aim in this section is to use the results of Section~\ref{sec:Bottcher} to prove Theorem~\ref{IntroDegreeTheorem1} which asserts that, for any ample curve $C$, the integers $a_n$ and $b_n$ grow on the order of $e^n$ and $d^n$, respectively, unless $C$ is preperiodic under $(f^a,g^b)$ or an algebraic coset in the case where $f$ and $g$ are both exceptional.
\par 

Let us quickly recall the definition of bidegree of a curve. On $(\bb{P}^1)^2$, the Picard group can be identified with $\bb{Z}^2$ via $\pi_1^*O(a) \otimes \pi_2^*O(b) \mapsto (a,b)$, where $\pi_i$ are the projection maps $\pi_i: (\bb{P}^1)^2 \to \bb{P}^1$. Let $f,g:\bb{A}^1 \to \bb{A}^1$ be two polynomials of degree $d$ and $e$ respectively with $d,e \geq 2$. The bidegree of a curve is defined to be the image of its linear equivalence class under this identification.  Then the map $\vphi = (f,g): (\bb{A}^1)^2 \to (\bb{A}^1)^2$ naturally extends to the compactification $\vphi: (\bb{P}^1)^2 \to (\bb{P}^1)^2$. 
\par

The first step in the proof of Theorem \ref{IntroDegreeTheorem1} is to observe that the bidegree of $\varphi^n(C)$ grows at the maximum possible rate provided that the degrees of the restrictions $\varphi^n|_C$ remain bounded. To see this, we need the following lemma from intersection theory.

\begin{lemma} \label{Intersection1}
Let $f,g$ be two polynomials of degree $d,e$ respectively and let $\vphi: (\bb{P}^1)^2 \to (\bb{P}^1)^2$ be the endomorphism $(f,g)$. Let $C$ be an irreducible curve with bidegree $(a,b)$ and $\vphi^n(C)$ have bidegree $(a_n,b_n)$. Let $d_n$ be the degree of the map $\vphi^n: C \to \vphi^n(C)$. Then
$$(e^n a, d^n b) = (d_n a_n, d_n b_n).$$
\end{lemma}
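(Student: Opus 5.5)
The plan is to compute the pushforward class $(\vphi^n)_*[C]$ in two ways and compare them in $\operatorname{Pic}((\bb{P}^1)^2)\cong \bb{Z}^2$. Since $\vphi^n = (f^n,g^n)$ is again a product map, it suffices to treat $n=1$ with $f,g$ of degrees $d,e$, and then replace $(f,g,d,e)$ by $(f^n,g^n,d^n,e^n)$.

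First, by the projection formula for the finite surjective morphism $\vphi$, for any divisor class $D$ we have
\[
\vphi_*[C]\cdot D = [C]\cdot \vphi^*D .
\]
On the other hand, $\vphi_*[C] = \deg(\vphi|_C)\,[\vphi(C)] = d_1 [\vphi(C)]$ by definition of the pushforward of cycles.

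Second, I compute $\vphi^*$ on the generators. Write $F_1 = \{pt\}\times \bb{P}^1$ and $F_2 = \bb{P}^1\times\{pt\}$. A curve of bidegree $(a,b)$ means its class is $aF_2+bF_1$ in the convention where $\pi_1^*O(1)$ corresponds to $F_1$ and $\pi_2^*O(1)$ corresponds to $F_2$. Then $[C]\cdot F_1 = a$ is the degree of $\pi_1|_C$ with the appropriate convention, and $[C]\cdot F_2=b$. Since $\pi_1\circ\vphi = f\circ\pi_1$, we get $\vphi^*\pi_1^*O(1) = \pi_1^* f^*O(1) = \pi_1^*O(d)$, so $\vphi^*F_1 = dF_1$, and similarly $\vphi^*F_2 = eF_2$. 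Pairing, we obtain
\[
d_1 [\vphi(C)]\cdot F_1 = [C]\cdot dF_1, \qquad d_1[\vphi(C)]\cdot F_2 = [C]\cdot eF_2 .
\]

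The only real care needed is matching these intersection numbers with the bidegree convention. This is the main obstacle; it is a bookkeeping issue rather than a conceptual one. With the convention that the bidegree $(a,b)$ pairs so that the first coordinate gets multiplied by $e$ (as in the example $\{y=x\}\mapsto\{y=x^{2^n}\}$ under $(z^2,z^4)$, of bidegree $(2^n,1)$), the two displayed identities read $d_1 a_1 = e a$ and $d_1 b_1 = d b$. Applying this to $\vphi^n=(f^n,g^n)$ gives $(e^n a, d^n b) = (d_n a_n, d_n b_n)$. One could alternatively argue elementarily: $a$ is the number of intersection points of $C$ with a generic horizontal line $\{y=c\}$. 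The preimage under $g^n$ of a generic $c$ has $e^n$ points, each generic line pulls back to $e^n$ lines, and each point of $\vphi^n(C)$ has $d_n$ preimages on $C$ generically. Counting then gives $e^n a = d_n a_n$, and symmetrically $d^n b = d_n b_n$.
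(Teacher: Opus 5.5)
Your proposal is correct and follows the paper's proof: you apply the projection formula for $\varphi^n$ against the two generators of $\operatorname{Pic}((\mathbb{P}^1)^2)$ and use $\varphi^n_*[C]=d_n[\varphi^n(C)]$. The one flaw is the convention: under the paper's identification $\pi_1^*O(a)\otimes\pi_2^*O(b)\mapsto(a,b)$, the class of $C$ is $aF_1+bF_2$, not $aF_2+bF_1$ as you wrote (which would yield the transposed pair $(da,eb)$). Hence $[C]\cdot F_1=b$ and $[C]\cdot F_2=a$, and your displayed identities give $d_1b_1=db$ and $d_1a_1=ea$ directly, with no need to appeal to the example.
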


\begin{proof}
Since $\vphi^n: (\bb{P}^1)^2 \to (\bb{P}^1)^2$ is a flat and proper morphism, the projection formula \cite[\href{https://stacks.math.columbia.edu/tag/0B0C}{Tag 0B0C}]{stacks-project} applies and we have an equality of intersection numbers
$$\vphi^n_*(C) \cdot L = C \cdot (\vphi^n)^*L $$
where $\vphi^n_*(C)$ is the proper pushforward of $C$ via $\vphi^n_*$. If we pick $L$ to have bidegree $(1,0)$, then $(\vphi^n)^*L$ has bidegree $(d^n,0)$ and $C \cdot (\vphi^n)^*L = d^n b$. 
\par 
On the other hand, by the definition of the proper pushforward, we have 
$$\vphi^n_*(C) = [K(C):K(\vphi^n(C))] \cdot [\vphi^n(C)]$$ where $K(C), K(\vphi^n(C))$ denotes the function field of $C$ and $\vphi^n(C)$ respectively. Then $[K(C):K(\vphi^n(C))]$ is exactly the degree of the map $\vphi^n: C \to \vphi^n(C)$. Thus we get $d_n b_n = d^n b$ as desired. Similarly one gets $e^n a = d_n a_n$. 
\end{proof}

From Lemma \ref{Intersection1}, we see that if $d_n$ remains bounded, then $a_n,b_n$ will necessarily grow on the order of $e^n,d^n$, respectively. Hence we shall now assume that the degree of $\vphi^n: C \to \vphi^n(C)$ is unbounded as $n$ goes to $\infty$ and show that this forces $C$ to be of a special form.
\par 
We sketch our strategy to show this. We first analyze the case where $(f,g) = (z^d,z^e)$. In this case, it is not hard to directly characterize when $\deg(\vphi^n:C \to \vphi^n(C))$ is unbounded. It turns out that this occurs only when $C$ is a translate of an algebraic subgroup. 
\par 
For general polynomials $f,g$, we use the Bottcher coordinates to transform the situation back to $(z^d,z^e)$. Under the Bottcher coordinates $\Phi$, the curve $C$ is transformed to an analytic curve $\Phi(C)$ satisfying the unbounded degree hypothesis for $(z^d,z^e)$. It turns out that even though $\Phi(C)$ is analytic, it still must be a translate of an algebraic subgroup if the degree of $\vphi^n$ remains unbounded. We then use the transcendence results of Section \ref{sec:Bottcher} to show that this forces a forward image of $C$ to be $(f^a,g^b)$-invariant. 
\par 
As a warm up, we handle the case of an algebraic curve for $(z^d,z^e)$. 

\begin{proposition} \label{AlgebraicCurve1}
Let $C \subseteq \bb{G}_m^2$ be an algebraic curve and let $\vphi = (z^d,z^e)$. If the degree of $\vphi^n:C \to \vphi^n(C)$ is unbounded as $n \to \infty$, then $C$ is a translate of an algebraic subgroup.     
\end{proposition}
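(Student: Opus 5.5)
The plan is to identify the degree of $\varphi^n|_C$ with the size of a finite group of torsion translations preserving $C$. Once those groups are unbounded, the full stabilizer of $C$ in $\mathbb{G}_m^2$, which is an algebraic group, must be infinite, and this forces $C$ to be a coset.

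\emph{Step 1: the degree equals the size of a stabilizer.} Write $\mu_N$ for the $N$-th roots of unity. Two points of $\mathbb{G}_m^2$ have the same image under $\varphi^n=(z^{d^n},z^{e^n})$ exactly when they differ by multiplication by some $\omega\in G_n:=\mu_{d^n}\times\mu_{e^n}$. So for $p\in C$, the fiber of $\varphi^n|_C$ through $p$ is $\{\omega p:\omega\in G_n,\ \omega p\in C\}$. For each fixed $\omega\in G_n$, the set $\{p\in C: \omega p\in C\}=C\cap\omega^{-1}C$ is either finite or, since $C$ is irreducible, all of $C$; the latter happens exactly when $\omega C=C$. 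Choose $p$ generic, meaning outside the finitely many finite sets $C\cap \omega^{-1}C$ with $\omega C\neq C$ and outside the branch locus. Then the fiber through $p$ is $\{\omega p : \omega\in\Stab(C)\cap G_n\}$, and distinct $\omega$ give distinct points. Hence
$$\deg(\varphi^n|_C)=\#\bigl(\Stab(C)\cap G_n\bigr),\qquad \Stab(C):=\{\omega\in\mathbb{G}_m^2:\ \omega C= C\}.$$

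\emph{Step 2: $\Stab(C)$ is an algebraic subgroup.} The set $\{\omega:\omega C\subseteq C\}=\bigcap_{c\in C}\{\omega:\omega c\in C\}$ is an intersection of Zariski closed sets, hence Zariski closed. For an irreducible curve, $\omega C\subseteq C$ already implies $\omega C=C$. So this closed set equals $\Stab(C)$, and it is a group, so $\Stab(C)$ is an algebraic subgroup of $\mathbb{G}_m^2$.

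\emph{Step 3: conclude.} By hypothesis and Step 1, $\Stab(C)$ contains finite subgroups of unbounded order, so it is infinite. Its identity component $T$ therefore has positive dimension. It cannot be all of $\mathbb{G}_m^2$, since then $C=\mathbb{G}_m^2$. So $T$ is a one-dimensional subtorus. For any $p\in C$ we have $pT\subseteq C$, and $pT$ is an irreducible closed curve inside the irreducible curve $C$. Hence $C=pT$ is a translate of an algebraic subgroup.

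The main obstacle is Step 1: the genericity argument needs care, because a priori the fiber might contain points $\omega p$ with $\omega$ not stabilizing $C$. Irreducibility of $C$ disposes of this, since each such $\omega$ contributes only finitely many bad points $p$. Steps 2 and 3 are standard.
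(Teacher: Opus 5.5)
Your proof is correct and follows essentially the same route as the paper: both rest on the fact that a torsion translation $\omega\in\mu_{d^n}\times\mu_{e^n}$ with $\omega C\neq C$ meets $C$ in only finitely many points. Hence generic fibers of $\varphi^n|_C$ come from elements of $\Stab(C)$, which forces $\Stab(C)$ to be infinite and $C$ to be a coset. Where the paper argues by pigeonhole and contradiction (assume $\Stab(C)$ finite and take $d_n>|\Stab(C)|$), you prove the exact identity $\deg(\varphi^n|_C)=\#\bigl(\Stab(C)\cap G_n\bigr)$. You also make explicit why $\Stab(C)$ is an algebraic group and why $C=pT$, points the paper leaves implicit.
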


\begin{proof}
Let $d_n$ be the degree of $\vphi^n: C \to \vphi^n(C)$ and let $G = \Stab_{\bb{G}_m^2} C$ be the stabilizer of $C$. Our aim is to show that if $d_n$ is unbounded, then $G$ must be positive dimensional. 
\par 
For a generic choice of $(\alpha,\beta) \in C$, we have $d_n$ distinct pairs $(\alpha_i,\beta_i) \in C$ such that 
$$(\alpha_i^{d^n}, \beta_i^{e^n}) = (\alpha^{d^n}, \beta^{e^n}).$$
Thus $\alpha_i = \omega_i \alpha, \beta_i = \omega_i' \beta$ for some $d^n$th root of unity $\omega_i$ and $e^n$th root of unity $\omega_i'$. Since there are infinitely many $(x,y) \in C$, by pigeonhole we can find a pair $(\omega,\omega') \in \mu_{d^n} \times \mu_{e^n}$ such that $(\omega x, \omega'y) \in C$ for infinitely many $x,y$. This implies that $(\omega,\omega')$ is in our stabilizer $G$.
\par 
If $G$ is a finite set, since $d_n$ is unbounded we may choose $n$ large enough such that $d_n > |G|$. Then for most $(x,y) \in C$, we can find a pair $(\omega,\omega') \in \mu_{d^n} \times \mu_{e^n}$ such that $(\omega,\omega') \not \in G$ and $(\omega x, \omega' y) \in C$. Again by pigeonhole, we can find a pair $(\omega,\omega') \not \in G$ such that $(\omega x , \omega' y) \in C$ for infinitely many pairs $(x,y)$. This contradicts $(\omega,\omega') \not \in G$ and thus $G$ must be infinite. 
\par 
In particular, $G$ must be positive dimensional. Since $C$ is an irreducible curve and is stabilized by $G$, it must be a translate of $G$ as desired. 
\end{proof}

We now move onto the case where $C \subseteq \bb{G}_m^2$ is possibly analytic and we have to possibly restrict to an open ball. Here, we will add in an additional assumption that the closure of $C$ passes through $(0,0)$ inside $\bb{A}^2$. The proof of Proposition \ref{AlgebraicCurve1} no longer applies since there are many analytic subgroups of $\bb{G}_m^2$ that are not algebraic. For example, the curve parametrized by $ f(t) = (e^{t}, e^{t \alpha})$ with $\alpha$ irrational will be an analytic subgroup that is not algebraic. We instead directly analyze the Puiseux series of $C$.

\begin{lemma} \label{AnalyticCurve1}
Let $\bb{D}_r \subset \bb{C}$ be an open disc of radius $r$ with $r < 1$ and let $C \subset (\bb{D}_r)^2$ be a non-horizontal/vertical complex analytic curve passing through $(0,0)$. Let $\vphi: (\bb{D}_r)^2 \to (\bb{D}_r)^2$ be given by $(z^d,z^e)$ where $d,e \geq 2$. Assume that the degree of $\vphi^n: C \to \vphi^n(C)$ is unbounded as $n \to \infty$. Then $C$ must contain the analytic curve given by $(z^i, cz^j)$ for $z$ near $0$, some $i,j \in \bb{N}$ and $c \in \bb{C^\ast}$. 
\end{lemma}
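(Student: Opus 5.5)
Take a branch of $C$ through $(0,0)$. Since it is neither horizontal nor vertical, the Newton--Puiseux theorem gives it a good parametrization
\[
\gamma(t) = \Bigl(t^m,\ \sum_{k\ge k_0} c_k t^k\Bigr), \qquad c_{k_0}\neq 0.
\]
The map $\varphi^n$ sends this branch to the branch of $\varphi^n(C)$ parametrized by $\gamma_n(t) = \bigl(t^{md^n},\ (\sum_k c_k t^k)^{e^n}\bigr)$. The plan is to compare the degree $d_n$ of $\varphi^n|_C$ with the non-injectivity of $\gamma_n$.

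\textbf{Step 1: reduce to a fixed branch.} Fix a small $\epsilon$. A generic point of $\varphi^n(C)$ near the origin has $d_n$ preimages on $C$. As $n$ grows, $\varphi^n$ contracts $(\mathbb{D}_r)^2$ toward $(0,0)$, so all of these preimages can be taken inside a fixed small neighbourhood of the origin. There $C$ has finitely many branches by Fact~\ref{fact:finite-branches}. By pigeonhole, some branch $\gamma$ carries at least $d_n/N$ preimages for infinitely many $n$, where $N$ is the number of branches. Hence the generic fibre cardinality of $\gamma_n$ is unbounded. I expect this localisation to be the main technical obstacle: one must check that the degree is realised near the origin. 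If the contraction argument feels shaky, a cleaner route is to assume $C$ is itself a germ at $(0,0)$ and define the degree on germs.

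\textbf{Step 2: turn fibres into symmetries.} Write $\sum_k c_k t^k = c\,t^{k_0}u(t)$ with $u(0)=1$. Suppose $\gamma_n(t)=\gamma_n(s)$ with $s,t$ small. The first coordinate gives $s=\zeta t$ with $\zeta^{md^n}=1$. The second gives
\[
\zeta^{k_0e^n}\,u(\zeta t)^{e^n} = u(t)^{e^n}.
\]
Taking the branch of the $e^n$-th root that is close to $1$, we get $u(\zeta t)=\omega\,u(t)$ with $\omega^{e^n}=\zeta^{-k_0e^n}$ and $\omega$ near $1$. Since $\omega$ is a root of unity close to $1$, this forces $\omega=1$ once $t$ is small, uniformly in $n$. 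So we need $u(\zeta t)=u(t)$ as germs, because the identity holds for infinitely many $t$ and the possible $\zeta$ range over a finite set for fixed $n$.

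\textbf{Step 3: conclude.} Let $H$ be the group of roots of unity $\zeta$ with $u(\zeta t)=u(t)$. The fibre cardinality of $\gamma_n$ is at most $|H\cap \mu_{md^n}|$, so $H$ is infinite. An element $\zeta$ fixes $u$ exactly when $c_k=0$ for every $k$ with $\zeta^{k-k_0}\neq1$. Since $H$ is infinite, for each $k>k_0$ it contains some $\zeta$ of order exceeding $k-k_0$, and then $c_k=0$. Therefore $u\equiv 1$, and the branch is $(t^m, c\,t^{k_0})$, which gives the conclusion with $i=m$ and $j=k_0$.
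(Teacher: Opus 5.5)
Your argument is correct and is essentially the paper's proof: pass to a single Puiseux branch $(t^m,a(t))$, turn the unbounded degree into infinitely many roots of unity $\zeta$ with $a(\zeta t)=\zeta^{k_0}a(t)$ (the paper's relation $a(\omega z)=\omega_i' a(z)$, with $\omega_i'$ pinned down by your normalization $u(0)=1$), and observe that a second nonzero exponent would bound the order of $\zeta$. Two small corrections: the step $\omega=1$ in Step 2 is not uniform in $n$, since the candidate roots of unity accumulate at $1$ as $n$ grows, but you only use it for fixed $n$, where the candidates form a finite set and a threshold on $|t|$ depending on $n$ suffices; and in Step 1 the fibres stay near the origin not because of contraction but because $\varphi^n(x,y)=\varphi^n(x',y')$ forces $|x|=|x'|$ and $|y|=|y'|$.
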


\begin{proof}
We may assume that $C$ is given by the zero locus of a power series $F(x,y) = 0$ near $(0,0)$. By Puiseux's theorem, we may parametrize its branches by $(z^j, a(z))$ for some $j > 0$ and where $a(z)$ is a power series in $z$ that converges near $0$. Let $C_1,\ldots,C_r$ be the branches of $C$ at $(0,0)$. Then for some $i$, the degree of $\vphi^n: C_i \to \vphi^n(C_i)$ is unbounded as $n \to \infty$. Thus we may assume that $C$ is given by $C_i$ and so that $C$ is parametrized by some $(z^j,a(z))$.
\par 
As in the proof of Proposition \ref{AlgebraicCurve1}, we know that the stabilizer of $C$ contains infinitely many pairs of roots of unity $(\omega_i,\omega_i')$. In particular, for any $z$ near $0$ we must have 
$$(\omega_i z^j, \omega_i' a(z)) = ((z')^j, a(z'))$$
for some other $z'$ near $0$. Then $z' = \omega z$ where $\omega^j = \omega_i$ and $a(\omega z) = \omega_i' a(z)$. Since there are only finitely many possible $\omega$'s, there exists a $\omega$ satisfying $\omega^j = \omega_i$ such that $a(\omega z) = \omega_i' a(z)$ for infinitely many $z$ and so $a(\omega z) = \omega_i' a(z)$ as power series. 
\par 
Now assume that $a(z)$ is not a monomial. If $j_1$ and $j_2$ are the two smallest degrees with non-zero coefficients of $a(z)$, by comparing the coefficients we must have 
$$\omega^{j_1} = \omega^{j_2} = \omega_i'.$$
Hence $\omega^{j_2 - j_1} = 1$ and so the order of $\omega$ is bounded. This implies that the order of both $\omega_i$ and $\omega_i'$ are bounded, which is a contradiction as the stabilizer contains infinitely many distinct pairs. Hence $a(z)$ must be a monomial and so $C$ is of the form $\{x^i = cy^j\}$. 
\end{proof}

We now move onto the general case where $f,g$ are both polynomials of degree $d,e \geq 2$. We will be assuming that $f$ is not exceptional. As mentioned at the beginning of the section, we will use the Bottcher coordinates to conjugate to the situation of $(z^d,z^e)$ and then apply Lemma \ref{AnalyticCurve1}. However Lemma \ref{AnalyticCurve1} requires our curve $C$ to pass through $(0,0)$ and so we will need to show that $C$ passes through a pair of superattracting periodic points. 

\begin{lemma} \label{Degree1}
Let $f,g$ be polynomials of degree $d,e \geq 2$ and let $C \subseteq (\bb{P}^1)^2$ be a curve. Let $\vphi$ be the endomorphism $(f,g)$ and assume that the degree of $\vphi^n:C \to \vphi^n(C)$ is unbounded as $n \to \infty$.  Then there exists $\alpha$ such that $(\infty,\alpha) \in C$ and the forward orbit of $\alpha$ under $g$ contains a superattracting periodic point for $g$. Furthermore, the local degree of $\vphi^n$ when restricted to $C$ at $(\infty,\alpha)$ is unbounded as $n$ goes to infinity.
\end{lemma}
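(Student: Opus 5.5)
The plan is to locate the unbounded degree at a single branch of $C$ over $x=\infty$, and then read off the superattracting cycle from the second coordinate. Throughout, let $d_n=\deg(\varphi^n|_C:C\to\varphi^n(C))$ and work with the normalizations $\widetilde C\to\widetilde{\varphi^n(C)}$, i.e. with branches. I will assume $C$ is not a vertical or horizontal line. For $C=\{c\}\times\mathbb P^1$ or $C=\mathbb P^1\times\{c\}$ the restricted map is an iterate of $g$ or $f$ between lines, so by Lemma \ref{Intersection1} $d_n=1$ for all $n$, and the hypothesis excludes this case.

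\textbf{Step 1: an unbounded branch at infinity.} The first projection satisfies $\pi_1\circ\varphi^n=f^n\circ\pi_1$. Moreover $(f^n)^{-1}(\infty)=\{\infty\}$, so $\varphi^n$ carries the branches of $C$ lying over $x=\infty$ onto the branches of $\varphi^n(C)$ lying over $x=\infty$. Conversely, every branch of $\widetilde C$ mapping to such an image branch lies over $x=\infty$.

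Fix one branch $\Gamma'$ of $\varphi^n(C)$ over $x=\infty$. The local degrees of $\varphi^n$ at the branches of $C$ above $\Gamma'$ sum to $d_n$. The number of branches of $C$ over $x=\infty$ is at most $C\cdot(\{\infty\}\times\mathbb P^1)=b$. Hence some branch $\Gamma$ of $C$ at a point $(\infty,\alpha)$ has local degree $k_n(\Gamma)\ge d_n/b$.

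There are only finitely many branches, so by pigeonhole one fixed branch $\Gamma$ has $k_n(\Gamma)$ unbounded along a subsequence. Local degrees are multiplicative under composition, so $k_n(\Gamma)$ is nondecreasing in $n$. Therefore $k_n(\Gamma)$ is unbounded along all $n$. The local degree of $\varphi^n|_C$ at the point $(\infty,\alpha)$ is at least $k_n(\Gamma)$, which gives the final assertion of the lemma.

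\textbf{Step 2: bounding the local degree by the $g$-coordinate.} Take a good parametrization $t\mapsto(x(t),y(t))$ of $\Gamma$ with $y(0)=\alpha$. Here $y$ is nonconstant, since $C$ is not horizontal; use a local coordinate at $\infty$ if $\alpha=\infty$.

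The map $\varphi^n$ restricted to $\Gamma$ factors as $t\mapsto s=t^{k_n}$ followed by a good parametrization of the image branch. Consequently $k_n$ divides, and in particular is at most, $\ord_t\bigl(g^n(y(t))-g^n(\alpha)\bigr)$, computed in local coordinates at each point. By the chain rule for orders this equals
\[
\ord_t(y(t)-\alpha)\cdot\prod_{m=0}^{n-1}\deg_{g^m(\alpha)}(g),
\]
where $\deg_z(g)$ is the local degree of $g$ at $z$. Since $k_n\to\infty$, the product of local degrees along the forward orbit of $\alpha$ is unbounded.

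\textbf{Step 3: conclusion, and the expected obstacle.} An unbounded product of local degrees means the orbit of $\alpha$ passes through critical points of $g$ infinitely often. As $g$ has finitely many critical points, some critical point $c$ is visited twice. Then $c$ is periodic and its cycle is superattracting. This covers $\alpha=\infty$ too, since $\infty$ is a superattracting fixed point of $g$.

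The main care point is Step 2: justifying that the local degree of a parametrized branch divides the order of each coordinate of the image parametrization. It also requires using a good parametrization, so that the branch-level degree count in Step 1 matches the global degree $d_n$. Both follow from Fact \ref{fact:good-param-det-branch} and the Newton--Puiseux description of branches.
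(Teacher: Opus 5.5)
For curves that are neither horizontal nor vertical, your argument is correct and is essentially the paper's. Total invariance of $\infty$ under $f$ plus pigeonhole gives a point $(\infty,\alpha)\in C$ where the local degree of $\varphi^n|_C$ is unbounded. Projecting to the second factor then makes $\deg_\alpha(g^n)$ unbounded, and that forces the orbit of $\alpha$ into a superattracting cycle. Your differences are refinements rather than a new route:
\begin{itemize}
\item Your branch bookkeeping (ramification indices over a point of the normalization summing to $d_n$, monotonicity of $k_n$) is a tidier version of the paper's neighbourhood-splitting argument.
\item Your Step 3 proves directly what the paper cites as \cite[Lemma 4.3]{Sil93}.
\item Because you pick $\alpha$ where the local degree blows up, both conclusions hold at the same point even when $\alpha=\infty$. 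The paper's shortcut ``if $(\infty,\infty)\in C$ we are done'' does not address the final assertion.
\end{itemize}

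The one incorrect step is your dismissal of lines. For $C=\{c\}\times\mathbb{P}^1$ the restricted map is $y\mapsto g^n(y)$, so $d_n=e^n$, as Lemma \ref{Intersection1} also gives. For $C=\mathbb{P}^1\times\{c\}$ one gets $d_n=d^n$. So these curves satisfy the unbounded-degree hypothesis rather than violate it.

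Moreover, the conclusion fails for them:
\begin{itemize}
\item $\{c\}\times\mathbb{P}^1$ with $c\neq\infty$ contains no point $(\infty,\alpha)$.
\item $\mathbb{P}^1\times\{c\}$ with $c$ a repelling fixed point of $g$ contains only $(\infty,c)$, whose orbit meets no superattracting cycle.
\end{itemize}
So the lemma genuinely needs $C$ to be neither horizontal nor vertical, e.g.\ ample. That is the setting of Theorem \ref{DegreeTheorem1}, and the paper's proof tacitly uses it too: it needs finitely many, and at least one, $\alpha_i$, and a nonconstant $\pi_2|_C$. You should state this as a hypothesis instead of claiming it follows from the unbounded degree.
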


\begin{proof}
If $(\infty,\infty) \in C$, then we are done. Otherwise,  we let $\{\alpha_1,\ldots,\alpha_r\}$ be the points of $\bb{P}^1$ such that $(\infty,\alpha_i) \in C$. For convenience, we will let $p_i = (\infty,\alpha_i)$. Let $d_n$ be the degree of $\vphi^n: C \to \vphi^n(C)$. Then for any open set $U \subseteq C$, we have that 
$$\vphi^n:  (\vphi^n)^{-1}(\vphi^n(U)) \to \vphi^n(U)$$
is a $d_n$ to $1$ map. For each $n$, we let $U_n$ be a small open neighbourhood around $p_1$. If we take $U_n$ sufficiently small depending on $n$, the preimage $(\vphi^{-n})(\vphi^n(U_n))$ splits into disjoint open sets, each containing a point of $\vphi^{-n}(\vphi^n(p_1))$ that lies on $C$. But since $\infty$ is totally invariant by $f$, we know that 
$$\vphi^{-n}(\vphi^n(p_1)) \cap C \subseteq \{p_1,\ldots,p_r\}$$
which is a finite set independent of $n$. Thus for each $n$, we can find some $1 \leq i \leq r$ and a neighbourhood $V$ about $p_i$ in $C$ such that $\vphi^n: V_i \to \vphi^n(V_i)$ has degree $\geq \frac{d_n}{r}$. We can then find a fixed $p_i$ such that as $n$ goes to infinity, we can find an open neighbourhood $V_n$ such that the degree of $\vphi^n: V_n \to \vphi^n(V_n)$ goes to infinity. 
\par 
If we let $\pi_i: (\bb{P}^1)^2 \to \bb{P}^1$ denote the projection maps for $i = 1,2$. We have the following commutative diagram
\begin{equation}
\label{eqn:com-diagram}
\begin{tikzcd}
V_n \cap C \arrow[r, "\varphi^n"] \arrow[d, "\pi_2"'] & \varphi^n(V_n) \cap \varphi^n(C) \arrow[d, "\pi_2"]\\
\pi_2(V_n \cap C) \arrow[r, "g^n"] & g^n(\pi_2(V_n \cap C)).
\end{tikzcd}
\end{equation}
Since the degree of $\varphi^n|_{V_n \cap C}$ goes to infinity, it follows that the degree of $g^n: \pi_2(V_n) \to \pi_2(\vphi^n(V_n))$ goes to infinity as well. Since $\alpha_i \in \pi_2(V_n \cap C)$ for all $n$, it follows that the multiplicity of $\alpha_i$ as a root of $g^n(z) = g^n(\alpha_i)$ goes to infinity as $n \to \infty$. By \cite[Lemma 4.3]{Sil93}, this implies that the forward orbit of $\alpha_i$ contains a superattracting periodic point as desired.
\end{proof}

With Lemma \ref{Degree1}, we can apply Lemma \ref{AnalyticCurve1} along with the results of Section \ref{sec:Bottcher} to deduce that the bidegree of $\vphi^n(C)$ must grow on the order of $(e^n,d^n)$ unless some forward image of $C$ is invariant under $(f^a,g^b)$.

\begin{theorem} \label{DegreeTheorem1}
Let $f,g \in \C[x]$ be polynomials of degrees $d,e \geq 2$, respectively, and let
$C \subseteq \mathbb{P}^1 \times \bP^1$ be an ample irreducible curve defined over $\C$. Assume that one
of the following conditions holds:
\begin{enumerate}
    \item $f$ is non-exceptional, and $C$ is not preperiodic under $(f^a,g^b)$ for any
    $a,b \geq 1$; or

    \item both $f$ and $g$ are exceptional, and $C$ is not a weakly $(f,g)$-special curve.
\end{enumerate}
If $(a_n,b_n)$ denotes the bidegree of $\varphi^n(C)$, then there exists
$M \in \mathbb{N}$ such that
\[
(a_n,b_n) = \frac{1}{M}(e^n a_0, d^n b_0)
\]
for all sufficiently large $n$, where $(a_0,b_0)$ is the bidegree of $C$.
\end{theorem}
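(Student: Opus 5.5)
By Lemma \ref{Intersection1}, $(e^n a_0, d^n b_0) = d_n(a_n,b_n)$ with $d_n = \deg(\vphi^n|_C)$. Since $\vphi^{n+1}|_C = \vphi|_{\vphi^n(C)}\circ \vphi^n|_C$, we get $d_n \mid d_{n+1}$, so $(d_n)$ is a non-decreasing divisibility chain. Hence it suffices to show that $d_n$ is bounded: a bounded divisibility chain of positive integers is eventually constant, equal to some $M$, and this gives $(a_n,b_n)=\frac1M(e^na_0,d^nb_0)$ for all large $n$. I would therefore argue by contradiction and assume $d_n\to\infty$.

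\textbf{Case (1).} Apply Lemma \ref{Degree1} to get a point $(\infty,\alpha)\in C$ whose $g$-orbit reaches a superattracting periodic point $\gamma$ of period $k$, with the local degree of $\vphi^n|_C$ at $(\infty,\alpha)$ unbounded. First replace $C$ by a forward image $\vphi^{m}(C)$ so that $\alpha=\gamma$ lies exactly on the cycle. This preserves the unboundedness of $d_n$ and reduces non-preperiodicity to non-preperiodicity of the image. Then pass to $\vphi^k$, i.e.\ to $(f^k,g^k)$, so that $\gamma$ is a superattracting fixed point of $g^k$ of local degree $e'\geq 2$ and $\infty$ is superattracting of degree $d^k$. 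Conclusions for $(f^{ka},g^{kb})$ still contradict the hypothesis. Near $(\infty,\gamma)$ the Böttcher coordinates $(\Phi_f,\Phi_g)$ conjugate $\vphi$ to $(z^{d},z^{e'})$ on a polydisc, sending $(\infty,\gamma)\mapsto(0,0)$. The image of a branch of $C$ there is an analytic curve through $(0,0)$ on which the local degrees are unbounded. Because $C$ is ample, $C$ contains neither $\{\infty\}\times\bP^1$ nor $\bP^1\times\{\gamma\}$, so this branch is not horizontal or vertical.

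Lemma \ref{AnalyticCurve1} then gives that the branch contains $(z^i,cz^j)$. Equivalently, $(\Psi_f(w^i),\Psi_g(cw^j))$ lies on $C$, and after rescaling $w$ by a $j$-th root of $c^{-1}$ this becomes $(\Psi_f(\theta z^i),\Psi_g(z^j))$ with $\theta\ne0$. So the defining polynomial $P$ of $C$ (in an affine chart) satisfies $P(\Psi_f(\theta z^i),\Psi_g(z^j))=0$, and Theorem \ref{BottcherTheorem1} makes $C$ preperiodic under some $(f^a,g^b)$. This contradicts the hypothesis, so $d_n$ is bounded.

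Two technical points need care here. One is checking that Lemma \ref{AnalyticCurve1} applies to the local branch: the unbounded local degree must concentrate on a single branch, which is handled inside that lemma's proof. The other is that Theorem \ref{BottcherTheorem1} is stated for $\Psi_g$ at a superattracting point of $g$ itself, so I would use it with $g^k$ and the Böttcher coordinate of $g^k$ at $\gamma$.

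\textbf{Case (2).} Both maps are exceptional, so after affine conjugation each is $\pm$ a Chebyshev polynomial or a power map. Using the semiconjugacy $z\mapsto z+z^{-1}$ from power maps to Chebyshev maps, I would lift $C$ to a curve $\tilde C\subset\bb G_m^2$ for $(z^{d},z^{e})$, up to roots of unity. Unbounded $d_n$ for $C$ forces unbounded degrees for the lift, since the covering has degree at most $4$. Proposition \ref{AlgebraicCurve1} then makes $\tilde C$ a torsion translate of a subgroup, so $C$ is weakly $(f,g)$-special, again a contradiction.

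The main obstacle I expect is the bookkeeping in Case (1): reducing to a fixed superattracting point while keeping both the unbounded local degree and the non-preperiodicity hypothesis intact. The Case (2) lifting also needs to be checked against the definition in \cite{Myrto-Schmidt}.
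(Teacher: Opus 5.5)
Your proposal follows the paper's proof essentially step for step. Lemma \ref{Intersection1} reduces the statement to boundedness of $d_n$; Lemma \ref{Degree1}, followed by pushing forward and passing to an iterate, puts you at a superattracting fixed point; then Böttcher coordinates with Lemma \ref{AnalyticCurve1} and Theorem \ref{BottcherTheorem1} finish, and in the exceptional case the semiconjugacy to $(z^d,z^e)$ with Proposition \ref{AlgebraicCurve1} does the job. Your extra care is sound, namely $d_n\mid d_{n+1}$, the rescaling to $(\Psi_f(\theta z^i),\Psi_g(z^j))$, and the degree-$\le 4$ lift. One small correction: Proposition \ref{AlgebraicCurve1} only yields a translate of an algebraic subgroup, not a torsion translate. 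For example, $\{y=\theta x\}$ with $\theta$ non-torsion under $(z^2,z^2)$ has $d_n=2^n$. So the contradiction in Case (2) needs the definition of weakly $(f,g)$-special to cover images of arbitrary translates.
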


\begin{proof}
First, suppose that both $f$ and $g$ are exceptional. Using the standard semiconjugacy from an exceptional polynomial to a power map given by the identity in the monomial case and by $z\mapsto z+z^{-1}$ in the Chebyshev case, we reduce to the case of
$(f, g) = (z^d, z^e)$. The result then follows from Lemma \ref{Intersection1} and Proposition \ref{AlgebraicCurve1}.

Now, assume that $f$ is non-exceptional. By Lemma \ref{Intersection1}, we know that 
$$(a_n,b_n) = \frac{1}{d_n} (e^n a_0, d^n b_0)$$
where $d_n$ is the degree of $\vphi^n: C \to \vphi^n(C)$. Hence it suffices to prove that $d_n$ is eventually constant. Since $d_n$ is a non-decreasing sequence of positive integers, it suffices to prove that if $d_n$ is unbounded, then there is a forward image of $C$ under $\vphi$ which is $(f^a,g^b)$-invariant. 
\par 
Now assume that $d_n$ is unbounded as $n \to \infty$. By Lemma \ref{Degree1}, there exists $\alpha$ such that $(\infty,\alpha) \in C$ and the forward orbit of $\alpha$ under $g$ contains a superattracting periodic point of $g$. By replacing $C$ with a forward image under $\vphi$, we may assume that $\alpha$ is a superattracting periodic point. Also by replacing $\vphi$ with an iterate $\vphi^k$, we may assume that $\alpha$ is a superattracting fixed point, with multiplicity $e' \geq 2$. 
\par 
Let $\Phi_f: B_{\infty} \simeq \bb{D}_r$ be the Bottcher coordinates for $f$ at $\infty$ and let $\Phi_g: B_{\alpha} \simeq \bb{D}_{r'}$ be the Bottcher coordinates for $g$ at $\alpha$, where $B_{\infty},B_{\alpha}$ are neighbourhoods of $\infty$ and $\alpha$ respectively. By possibly shrinking them, we may assume that $r = r'$ and also that $U = C \cap (B_{\infty} \times B_{\alpha})$ is an irreducible analytic curve. Then if $\Phi = (\Phi_f,\Phi_g)$, we have that $\Phi(U)$ is an analytic curve in $(\bb{D}_r)^2$.
\par 
By Lemma \ref{Degree1}, we know that the local degree of $\vphi^n$ when restricted to $C$ at $(\infty,\alpha)$ is unbounded. Since $(\infty,\alpha)$ is superattracting, we know that $\vphi^n(U) \subseteq U$. In particular, if we let $\phi = (z^d, z^{e'})$, this means that the degree of $\phi^n: \Phi(U) \to \phi^n(\Phi(U))$ is unbounded as $n \to \infty$. By Lemma \ref{AnalyticCurve1}, this means that $\Phi(U)$ contains an analytic curve given by $(cz^i, z^j)$ for $z$ near $0$, some $i,j \in \bb{N}$ and $c \in \bb{C}$. 
\par 
If we let $\Psi_f,\Psi_g$ be the inverse of the Bottcher coordinates of $f$ at $\infty$ and $g$ at $\alpha$, then $(\Psi_f,\Psi_g)$ is the inverse to $\Phi$. Thus if $P$ is the irreducible polynomial defining $C$, this implies that 
$$P(\Psi_f(cz^i), \Psi_g(z^j)) = 0$$
for all $z$ near $0$. As $C$ is not horizontal nor vertical, we know that $c \not = 0$. Since $f$ is non-exceptional, we can then apply Theorem \ref{BottcherTheorem1} to conclude that some forward image of $C$ under $(f,g)$ is invariant under $(f^a,g^b)$ as desired.
\end{proof}

\section{Intersection multiplicity growth at fixed points}
\label{sec:mult-growth}
The goal of this section is to prove Theorem~\ref{Multiplicity4}, from which Theorem~\ref{IntroIntersectionTheorem1} follows. Recall that in Theorem \ref{IntroIntersectionTheorem1}, we are interested in bounding from above the intersection multiplicity of $\vphi^n(C)$ and $C'$ at $p$ as $n$ goes to infinity where $p$ is assumed to be a fixed point under the product map $(f,g)$. 
\par 
Let $C_1,\ldots,C_r$ be the branches of $C$ at $p$. Then the branches of $\vphi^n(C)$ at $p$ are given by  $\vphi^n(C_1),\ldots,\vphi^n(C_r)$. Given a branch $C_i$, we may parametrize it by $(a(z),b(z))$, where $a(z),b(z)$ are power series, so that $\vphi^n(C_i)$ is parametrized by $(f^n(a(z)), g^n(b(z)))$. If $\vphi^n: C \to \vphi^n(C)$ is injective, then $(f^n(a(z)),g^n(b(z)))$ will be a good parametrization of $\vphi^n(C_i)$ if $(a(z),b(z))$ is a good parametrization for $C_i$. Thus to understand the intersection multiplicity $i_{p}(\vphi^n(C) \cap C')$, it suffices to understand
$$\ord_z(P(f^n(a(z)),g^n(b(z))))$$
where $\ord_z$ is the order of vanishing at $z = 0$ and $P$ is the defining equation of $C'$. Our approach to analyzing the growth rate of the order of vanishing follows the strategy of \cite[Theorem~1]{Arnold-bounded-mult} and uses the Skolem--Mahler--Lech theorem to show that the multiplicities exhibit the expected growth.
\par 
We start with the following proposition, which is key to our analysis of the case when $(f,g)$ is fixed at $p = (p_1,p_2)$ and $f$ and $g$ have local degree at least $2$ at $p_1$ and $p_2$, respectively.

\begin{proposition} \label{Multiplicity1}
Let $P_1,\ldots,P_r \in \bb{C}[[z]]$ be non-zero power series such that $P_i(0) \not = 0$ for all $i=1,2,\ldots,r$. Let $c_1,\ldots,c_r$ be non-zero complex numbers. Then 
$$\ord_z \left(\sum_{i=1}^{r} c_i P_i(z)^n \right)$$
is bounded unless $P_i(z)/P_j(z)$ is a root of unity for some $i \not = j$. 
\end{proposition}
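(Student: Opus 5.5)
We argue by contradiction. Suppose no ratio $P_i/P_j$ with $i\neq j$ is a constant root of unity, and suppose the orders are unbounded along some sequence of $n$. Each $P_i$ is a unit in $\C[[z]]$, so for every $k\ge 0$ the coefficient of $z^k$ in $S_n(z):=\sum_i c_iP_i(z)^n$ is an exponential polynomial in $n$. Writing $P_i(z)=\lambda_i(1+u_i(z))$ with $\lambda_i=P_i(0)\neq 0$ and $u_i(0)=0$, we have
\[
[z^k]\,P_i(z)^n=\lambda_i^n\,Q_{i,k}(n),
\]
where $Q_{i,k}$ is a polynomial in $n$ obtained from the binomial expansion $\sum_m \binom{n}{m}[z^k]u_i^m$. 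Hence $a_k(n):=[z^k]S_n=\sum_i c_i\lambda_i^nQ_{i,k}(n)$ is a linear recurrence sequence in $n$.

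The plan is to apply Skolem--Mahler--Lech to these sequences, in the spirit of Arnold's argument. The set $Z_k=\{n: a_0(n)=\dots=a_k(n)=0\}$ is a finite union of a finite set and full arithmetic progressions, since each $a_l$ has this form and so do finite intersections. The sets $Z_k$ are decreasing in $k$, and the assumption that the order of $S_n$ is unbounded means every $Z_k$ is infinite. Each infinite $Z_k$ contains a full progression $a+b\N$. A Noetherian-type argument follows: the periods are all bounded by a fixed $N$, namely the lcm of the orders of the roots of unity among the ratios $\lambda_i/\lambda_j$, because the SML periods for these sequences depend only on their characteristic roots, and those are the $\lambda_i$ for every $k$. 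So each $Z_k$ contains a full class $a\bmod N$ up to finitely many exceptions. There are only $N$ classes, so some class $a \bmod N$ works for all $k$. Along $n\equiv a \pmod N$ we then get $a_k(n)=0$ for all large $n$ in the class, for every $k$. I would make this uniform by using that a nondegenerate recurrence vanishing on infinitely many terms of a class vanishes identically on that class.

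It follows that on the class $n=a+Nm$, every coefficient sequence is identically zero as a function of $m$. I would then derive a contradiction directly at the level of power series. Consider $\sum_i c_iP_i^{a}\,(P_i^N)^m=0$ for all large $m$. Grouping the $i$ by equal values of $P_i^N$, a Vandermonde argument (or linear independence of distinct characters $m\mapsto R^m$ over the field $\C((z))$) shows that each group sum $\sum_{i\in G}c_iP_i^a$ vanishes. Since all $c_i\neq 0$ and $P_i^a\neq 0$, every group has at least two elements. Thus $P_i^N=P_j^N$ for some $i\neq j$, so $(P_i/P_j)^N=1$ in $\C[[z]]$. This forces $P_i/P_j$ to be a constant $N$-th root of unity, which contradicts the hypothesis.

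The main obstacle is the uniformity step. Skolem--Mahler--Lech applies to one recurrence at a time, but we need a single progression on which all $a_k$ vanish simultaneously. The claim of a uniform period bound needs care for degenerate recurrences, and the exceptional finite sets could a priori grow with $k$. If that cannot be controlled, I would instead use the $p$-adic version of the proof directly. Choose a prime $p$ and an embedding such that all $\lambda_i$, all coefficients of the $u_i$, and the $c_i$ lie in a $p$-adic field, with the $\lambda_i^{N}\equiv 1$ suitably. Then on each residue class, $n\mapsto S_n(z)$ is a $p$-adic analytic function of $n$ with values in power series. Its vanishing to order $\ge k$ on infinitely many $n$ of a class gives vanishing of each coefficient identically on that class. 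Passing to the union over $k$ then gives the identical vanishing used in the final step.
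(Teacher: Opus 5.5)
Your proposal is correct and follows essentially the paper's route: the paper likewise writes the coefficient of $z^m$ in $P_i(z)^n$ as $Q_{i,m}(n)P_i(0)^n$ (Lemma \ref{Multiplicity2}). It then splits into residue classes modulo a period depending only on the $P_i(0)$, so that each coefficient sequence becomes nondegenerate, gets $\sum_i c_iP_i(z)^n=0$ on a whole class via Skolem--Mahler--Lech, and finishes with Lemma \ref{Multiplicity3}. Your Vandermonde step over $\C((z))$ is a cleaner version of that lemma's telescoping induction, and the $p$-adic fallback is unnecessary because your nondegeneracy observation already gives the uniformity in $k$.
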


To prove Proposition \ref{Multiplicity1}, we need two auxiliary lemmas. The first is about the coefficient of $z^m$ in $P_i(z)^n$. 

\begin{lemma} \label{Multiplicity2}
Let $P(z) = \sum_{i=0}^{\infty} a_i z^i$ with $a_0 \not = 0$. Then the coefficient of $z^m$ in $P(z)^n$ is given by $Q_m(n) a_0^n$, where $Q_m(x)$ is a polynomial of degree at most $m$ that depends only on the coefficients $a_0,\dots,a_m$. 
\end{lemma}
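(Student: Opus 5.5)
We will factor out the constant term and then expand with the binomial theorem. Write $P(z) = a_0(1 + u(z))$, where
\[
u(z) = \sum_{k\ge 1} \frac{a_k}{a_0} z^k \in z\,\C[[z]].
\]
Then $P(z)^n = a_0^n (1+u(z))^n$. By the binomial theorem,
\[
(1+u(z))^n = \sum_{k\ge 0} \binom{n}{k} u(z)^k .
\]
This identity holds as formal power series for every integer $n \ge 0$, and the right-hand side is a locally finite sum, since $u(z)^k \in z^k\C[[z]]$.

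Next, extract the coefficient of $z^m$. Because $u^k$ has order at least $k$, only the terms with $k \le m$ contribute. Let $u_{k,m}$ denote the coefficient of $z^m$ in $u(z)^k$; it is a polynomial in $a_1/a_0, \dots, a_m/a_0$, since only $a_1,\dots,a_m$ can appear in a product of $k\ge1$ factors of total degree $m$. The coefficient of $z^m$ in $P(z)^n$ is then
\[
a_0^n \sum_{k=0}^m \binom{n}{k} u_{k,m}.
\]
We therefore define
\[
Q_m(x) := \sum_{k=0}^m \frac{x(x-1)\cdots(x-k+1)}{k!}\, u_{k,m}.
\]
Each summand has degree $k \le m$ in $x$, so $Q_m$ is a polynomial of degree at most $m$. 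Its coefficients depend only on $a_0,\dots,a_m$, and $Q_m(n)$ equals the bracketed sum for every $n \ge 0$. Note also that $u_{0,m} = \delta_{m,0}$.

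No step here is a real obstacle. The only care needed is to check that the binomial expansion is valid formally, that truncating at $k \le m$ is exact, and that $Q_m$ is independent of $n$, which holds because the $u_{k,m}$ do not involve $n$.
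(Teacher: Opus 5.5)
Your proof is correct, but it takes a different route from the paper. The paper inducts on $m$. Comparing coefficients in $P^{n+1}=P^n\cdot P$ gives $a_{m,n+1}=a_0a_{m,n}+a_0^n\sum_{i=0}^{m-1}Q_i(n)a_{m-i}$. So the normalized quantity $a_{m,n}/a_0^n$ has first differences in $n$ that are polynomials of degree at most $m-1$, and is therefore itself a polynomial of degree at most $m$; this last step tacitly uses that partial sums of a degree-$(m-1)$ polynomial form a polynomial of degree at most $m$.

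You instead factor $P=a_0(1+u)$ and expand binomially. This produces the closed form $Q_m(x)=\sum_{k=0}^m\binom{x}{k}u_{k,m}$ in one stroke, with no induction and no summation step. Your remark that $\binom{n}{k}=0$ for $k>n$ also makes it transparent that the formula holds for every $n\ge 0$, including $n<m$.

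The explicit form gives slightly more. Since $u_{m,m}=(a_1/a_0)^m$, the top coefficient of $Q_m$ is $(a_1/a_0)^m/m!$, so $Q_m$ has degree exactly $m$ if and only if $a_1\neq 0$. This is worth knowing because the proof of Proposition~\ref{Multiplicity1} later describes $Q_{i,m}$ as having degree $m$, which in general holds only as ``at most $m$'' (for $P=1+z^2$ one gets $Q_1\equiv 0$). Only polynomiality is used there, so nothing breaks. The paper's recursive argument, for its part, needs nothing beyond the multiplication rule for power series and never requires identifying the coefficients $u_{k,m}$.
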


\begin{proof}
The proof is by induction on $m$. The base case $m = 0$ is clear since the constant coefficient in $P(z)^n$ is equal to $a_0^n$. This shows that 
\[
Q_0(n) = 1,
\]
for all $n \ge 0$. 

Now suppose that the coefficient of $z^k$ in $P(z)^n$ is given by $Q_k(n)a_0^n$ for all $ 0 \le k \le m - 1$ where $Q_k$ is a polynomial of degree $k$ depending only on the coefficients $a_0,\dots,a_k$. Let $a_{m,n}$ be the coefficient of $z^m$ in $P(z)^n$. We can write
\begin{align}
P(z)^{n + 1} &= P(z)^nP(z) \notag \\
&= \left(a_0^nQ_0(n) + \cdots a_0^nQ_{m - 1}(n)z^{m-1} + a_{m,n}z^m + \cdots \right)(a_0 +\cdots + a_mz^m + \cdots).
\end{align}
This shows that 
\[
a_{m, n+ 1} = a_0a_{m,n} + a_0^nQ_{m-1}(n)a_1 + \cdots + a_0^nQ_0(n)a_m.
\]
Define $a'_{m,n} = \frac{a_{m,n}}{a_0^n}$ for all $n \ge 0$. Then, by the above equation we get
\begin{equation}
\label{eqn:coeff-rec}
a'_{m,n+1} = a'_{m,n} + \frac{\sum_{i = 0}^{m-1}Q_i(n)a_{m - i}}{a_0}.
\end{equation}
By the inductive hypothesis
\[
\frac{\sum_{i = 0}^{m-1}Q_i(n)a_{m - i}}{a_0}
\]
is a polynomial of degree at most $m - 1$ depending only on $a_0, \dots, a_{m}$. Therefore, the above recurrence from \eqref{eqn:coeff-rec} shows that $a'_{m,n}$ must be a polynomial of degree at most $m$ depending only on the coefficients $a_0,\dots,a_m$. The desired conclusion follows since $a_{m,n} = a'_{m,n}a_0^n$. 
\end{proof}

The second lemma is about the case when $\sum_{i=1}^{r} c_i P_i(z)^n$ is identically zero. 

\begin{lemma} \label{Multiplicity3}
Let $P_1,\ldots,P_r \in \bb{C}[[z]]$ be non-zero power series and let $Q_1,\ldots,Q_r$ be non-zero power series too. If
$$\sum_{i=1}^{r} Q_i(z) P_i(z)^n = 0$$
for all $n \equiv k \pmod l$, where $k,l$ are positive integers, then $P_i(z)/P_j(z)$ is a root of unity for some $i \not = j$. 
\end{lemma}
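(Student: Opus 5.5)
The plan is a Vandermonde argument over the field of fractions $K=\C((z))$ of the integral domain $\C[[z]]$. Argue by contradiction: suppose that $P_i/P_j$ is not a root of unity for every $i\neq j$, where these quotients are taken in $K$.

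First, reduce to an exponential-polynomial identity in a single integer variable. Every $n\equiv k \pmod l$ with $n\ge 1$ can be written as $n=k+lm$ with $m\ge 0$ (replacing $k$ by its least positive representative if needed). Set
\[
A_i:=Q_iP_i^{k}\in K \quad\text{and}\quad R_i:=P_i^{l}\in K .
\]
The hypothesis then reads
\[
\sum_{i=1}^r A_i R_i^{m}=0 \qquad\text{for all } m\ge 0 .
\]
Each $A_i$ is nonzero, because $Q_i$ and $P_i$ are nonzero in the domain $\C[[z]]$. The $R_i$ are nonzero for the same reason, and they are pairwise distinct. Indeed, if $R_i=R_j$ for some $i\neq j$, then $(P_i/P_j)^l=1$, so $P_i/P_j$ would be a root of unity, contrary to our assumption.

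Next, take the $r$ equations for $m=0,1,\dots,r-1$. They say that the vector $(A_1,\dots,A_r)^T$ lies in the kernel of the Vandermonde matrix $(R_i^{m})_{0\le m\le r-1,\,1\le i\le r}$ over $K$. Its determinant is $\prod_{i<j}(R_j-R_i)$, which is nonzero because the $R_i$ are distinct and $K$ is a field. So the matrix is invertible, and hence $A_i=0$ for every $i$. This contradicts $A_i\neq 0$.

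The degenerate case $r=1$ is covered by the same argument: there $Q_1P_1^n\neq 0$ for every $n$, so the hypothesis can never hold. If the hypothesis is only known for sufficiently large $n$ in the progression, the same argument works with the exponents $m=m_0,\dots,m_0+r-1$, using the scaled Vandermonde determinant $\prod_i R_i^{m_0}\prod_{i<j}(R_j-R_i)\neq 0$.

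The only point needing care is making sure all the objects live in a field where Vandermonde applies and where the nonvanishing of products holds. Working in $K=\C((z))$ and using that $\C[[z]]$ is an integral domain handles this, so I do not expect a genuine obstacle.
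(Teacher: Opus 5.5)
Your proof is correct and uses the same reduction as the paper: replace $Q_i$ by $Q_iP_i^k$ and $P_i$ by $P_i^l$, so that the $P_i^l$ must be pairwise distinct unless some ratio $P_i/P_j$ is a root of unity. The only difference is in the final step: the paper eliminates terms one at a time (multiplying the identity by $P_1$, subtracting the shifted identity, and inducting on $r$), which is the row-reduction proof of the Vandermonde invertibility that you invoke directly over $\C((z))$.
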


\begin{proof}
First, by replacing $Q_i(z)$ with $Q_i(z)P_i(z)^k$ and $P_i(z)$ with $P_i(z)^l$, it suffices to prove that $\sum_{i=1}^{r} Q_i(z)P_i(z)^n = 0$ for all $n \geq 1$, then $P_i(z) = P_j(z)$ for some $i \not = j$. 
\par 
We now consider the expression
$$P_1(z) \left(\sum_{i=1}^{r} Q_i(z) P_i(z)^n \right) - \sum_{i=1}^{r} Q_i(z) P_i(z)^{n+1} = \sum_{i=2}^{r} \big(P_1(z)Q_i(z) - P_i(z)Q_i(z) \big) P_i(z)^n = 0.$$
If any of $P_1(z)Q_i(z) - P_i(z)Q_i(z) = 0$, we obtain $P_1(z) = P_i(z)$ as desired. Else, we obtain the case of $r-1$ power series. If $r = 2$, then since $P_2(z) \not = 0$, we must have $P_1(z) = P_2(z)$. If $r > 2$, we can apply induction and we are done. 
\end{proof}

We can now prove Proposition \ref{Multiplicity1}.

\begin{proof}[Proof of Proposition \ref{Multiplicity1}]
Suppose there is an increasing sequence of positive integers $\{a_n\}_{n \geq 1}$ such that $\ord_z(\sum_{i=1}^{r} c_i P_i(z)^{a_n})$ goes to $\infty$ as $n$ goes to $\infty$. For each $i$, we write $P_i(z) = \sum_{j=0}^{\infty} a_{ij} z^j$. Then by Lemma \ref{Multiplicity2}, the coefficient of $z^m$ in $P_i(z)^n$ is given by $Q_{i,m}(n) a_{i0}^{n}$ where $Q_{i,m}$ is a polynomial of degree $m$ that depends only on the coefficients of $P_i$, since $a_{i0} \not = 0$. 
\par 
Hence by our assumption that $\ord_z(\sum_{i=1}^{r} c_i P_i(z)^{a_n})$ going to $\infty$ as $n$ goes to $\infty$, we have that
$$\sum_{i=1}^{r} c_i Q_{i,m}(a_n) a_{i0}^{a_n} = 0$$
for all sufficiently large $n$, depending on $m$. We now apply Proposition 3.2 of \cite{Ghi19}, or more precisely the discussion before Proposition 3.2. Letting 
$$b_n = \sum_{i=1}^{r} c_i Q_{i,m}(n) a_{i0}^{n},$$
we may split it into a finite disjoint union of nondegenerate linear recurrences, each of the form $c_n = b_{nk + l}$ for some $k$ and $l$, that depend only on the coefficients $a_{i0}$ and not on $m$. Then by \cite[Proposition 3.2]{Ghi19}, as one of these nondegenerate linear recurrence must attain $0$ infinitely many times, it must be identically $0$. In particular we can find $k$ and $l$ such that 
$$\sum_{i=1}^{r} c_i Q_{i,m}(n) a_{i0}^{n} = 0$$
for all $n \equiv k \pmod l$. 
\par 
Observe that $k,l$ depend only on $a_{i0}$ and not on $m$. In particular, we obtain
$$\sum_{i=1}^{r} c_i P_i(z)^n = 0$$
for all $n \equiv k \pmod l$. By Lemma \ref{Multiplicity3}, this implies that $P_i(z)/P_j(z)$ is a root of unity for some $i \not = j$ as desired. 
\end{proof}

The following lemma allows us to change to a more convenient coordinate system in our proof of Theorem \ref{Multiplicity4}. 
\begin{lemma} \label{Multiplicity6}
Let $F(x, y) \in \C[[x,y]]$ and suppose $\sigma_1(x), \sigma_2(x) \in \C[[x]]$ are such that $\sigma_1(0) = \sigma_2(0) = 0$. Let $\ell_1, \ell_2$ be positive integers and let $S$ be the set of $(i,j)$ for which $x^i y^j$ has a non-zero coefficient for $F$ and let $T$ be the corresponding set for $F(\sigma_1(x), \sigma_2(y))$. Then, 
\[
\min_{(i, j) \in S}\{\ell_1 i + \ell_2 j\} \le \min_{(i, j) \in T}\{\ell_1 i + \ell_2 j\}.
\]
 Moreover, equality happens if both $\sigma_1$ and $\sigma_2$ have an inverse under composition in $\C[[x]]$.
\end{lemma}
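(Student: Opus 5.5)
We argue directly with the Newton-polygon type weight $w(i,j)=\ell_1 i+\ell_2 j$. Write $F=\sum_{(i,j)\in S} f_{ij}x^iy^j$ and set $m=\min_{(i,j)\in S}\{\ell_1 i+\ell_2 j\}$. The basic observation is that $w$ behaves like a valuation on $\C[[x,y]]$, once we define $w(G)$ as the minimum of $w$ over the monomials of $G$. Concretely, $w(GH)\ge w(G)+w(H)$, and $w(G+H)\ge\min\{w(G),w(H)\}$; the latter also holds for convergent infinite sums in the $(x,y)$-adic topology.

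Since $\sigma_1(0)=0$, every monomial of $\sigma_1(x)$ has $x$-degree at least $1$, so $w(\sigma_1(x))\ge \ell_1$. Similarly, $w(\sigma_2(y))\ge\ell_2$. It follows that
\[
w\bigl(\sigma_1(x)^i\sigma_2(y)^j\bigr)\ge \ell_1 i+\ell_2 j .
\]
The series $F(\sigma_1(x),\sigma_2(y))=\sum f_{ij}\sigma_1(x)^i\sigma_2(y)^j$ converges formally, because each term lies in $(x,y)^{i+j}$. Hence only finitely many terms contribute to any given monomial, and every monomial that appears with nonzero coefficient in it has weight at least $\min_{(i,j)\in S}(\ell_1 i+\ell_2 j)=m$. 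This is exactly the stated inequality. If $T$ is empty, i.e.\ the composite is $0$, the right-hand side is $+\infty$ and the inequality holds trivially.

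For the equality statement, assume $\sigma_1$ and $\sigma_2$ have compositional inverses $\tau_1$ and $\tau_2$. These also satisfy $\tau_k(0)=0$. Put $G(x,y)=F(\sigma_1(x),\sigma_2(y))$, so that $F(x,y)=G(\tau_1(x),\tau_2(y))$. Applying the inequality just proved to $G$ and $(\tau_1,\tau_2)$ gives $\min_T w\le\min_S w$. Combined with the first part, this yields equality. The substitution is well-defined here since $F\ne0$ forces $G\ne0$.

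The only point needing care, rather than being a genuine obstacle, is the formal legitimacy of the rearrangement, namely finiteness of the contributions to each coefficient. This is handled by the $(x,y)$-adic convergence noted above.
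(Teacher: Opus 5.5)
Your proof is correct and takes essentially the same approach as the paper: each term $\sigma_1(x)^i\sigma_2(y)^j$ involves only monomials divisible by $x^iy^j$, which gives the inequality, and applying that inequality to the compositional inverses gives equality. You also justify the formal convergence of the substituted series, which the paper leaves implicit.
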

\begin{proof}
We have 
\[
F(\sigma_1(x), \sigma_2(y)) = \sum_{i,j \in S} a_{i,j}\sigma_1(x)^i\sigma_2(y)^j
\]
For each term $a_{i,j} \sigma_1(x)^i \sigma_2(y)^j$, since $\sigma_1(0) = \sigma_2(0) = 0$, the monomials appearing in $$a_{i,j} \sigma_1(x)^i \sigma_2(y)^j$$ are all divisible by $x^i y^j$. Thus certainly we must have 
$$\min_{(i,j) \in T} \{l_1 i + l_2 j \} \geq \min_{(i,j) \in S} \{l_1 i + l_2 j\}.$$
Note that if $\sigma_1$ and $\sigma_2$ have inverses $\sigma_1^{-1}$ and $\sigma_2^{-1}$, then applying the lemma to $(F, \sigma_1,\sigma_2)$ and then to $(F(\sigma_1, \sigma_2), \sigma_1^{-1}, \sigma_2^{-1})$ we get 
\[
\min_{(i, j) \in S}\{\ell_1 i + \ell_2 j\} \le \min_{(i, j) \in T}\{\ell_1 i + \ell_2 j\} \le \min_{(i, j) \in S}\{\ell_1 i + \ell_2 j\} 
\]
which gives us the second conclusion of the lemma.
\end{proof}

We can now prove our main theorem that computes the intersection multiplicity $\mult_p(\vphi^n(C)\cap C')$ at a fixed point $p$ for $\vphi$. To simplify the situation, we will replace $C$ with an analytic branch at $p$, so that $C$ is given by a good parametrization $(a(z),b(z))$. We will also assume that $\vphi^n: C \to \vphi^n(C)$ is injective, which Section \ref{sec:Degree} shows we can usually assume. By translating, we may assume that $p = (0,0)$ is the origin of $\bb{A}^2$.
        
\begin{theorem} \label{Multiplicity4}
Let $f,g$ be convergent power series in $\bb{C}\{z\}$ such that $(0,0) \in (\bb{A}^1)^2$ is a fixed point for $\vphi = (f,g)$. Let $C$ be an analytic branch passing through $(0,0)$ with good parametrization $(a(z),b(z))$ and let $C'$ be an analytic curve given by $$F(x,y) = \sum_{(i,j) \in S} c_{i,j}x^i y^j$$
where $S$ is a subset of $\bb{Z}_{\geq 0}^2$ and $c_{i,j}$ are non-zero complex numbers. Suppose that the branch $C$ is not preperiodic under $\varphi$ and the restriction $\varphi^n: C\lra \varphi^n(C)$ is locally injective for every $n \ge 1$. Then, we have
$$i_{(0,0)}(\vphi^n(C) , C') = \min_{(i,j) \in S} \{d^n k_1 i + e^n k_2 j\} + O(1)$$
where 
$$\ord_z(f(z)) = d, \, \ord_z(g(z)) = e, \, \ord_z(a(z)) = k_1 \text{ and } \ord_z(b(z)) = k_2.$$
\end{theorem}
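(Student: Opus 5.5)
The plan is to compute $\ord_z F\bigl(f^n(a(z)),g^n(b(z))\bigr)$ directly. By local injectivity of $\varphi^n|_C$, the pair $(f^n\circ a,\,g^n\circ b)$ is a good parametrization of $\varphi^n(C)$, so this order is exactly $i_{(0,0)}(\varphi^n(C),C')$.

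\textbf{Lower bound.} Since $\ord_z f^n(a(z)) = d^nk_1$ and $\ord_z g^n(b(z))=e^nk_2$, each monomial $c_{i,j}\,f^n(a)^i g^n(b)^j$ has order exactly $d^nk_1 i+e^nk_2 j$. Hence the order of the sum is at least
\[
m_n:=\min_{(i,j)\in S}\{d^nk_1 i+e^nk_2 j\}.
\]

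\textbf{Upper bound, superattracting case $d,e\ge 2$.} First conjugate to monomial maps. Let $\Psi_f,\Psi_g$ be inverse B\"ottcher coordinates at $0$, so that $f^n(\Psi_f(w))=\Psi_f(w^{d^n})$. Put $A=\Phi_f\circ a = z^{k_1}\alpha(z)$ and $B=\Phi_g\circ b=z^{k_2}\beta(z)$ with $\alpha(0),\beta(0)\neq 0$. Then
\[
F\bigl(f^n(a),g^n(b)\bigr)=G\bigl(A^{d^n},B^{e^n}\bigr),\qquad G:=F(\Psi_f,\Psi_g).
\]
By Lemma~\ref{Multiplicity6} with weights $(d^nk_1,e^nk_2)$, the minimal weighted degree of $G$ equals $m_n$ for every $n$. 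The support of $G$ may be infinite, but only finitely many monomials are relevant: those on the lower-left part of the Newton polygon, i.e.\ those that can be weight-minimizing for some $n$.

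For large $n$ the direction $(d^nk_1,e^nk_2)$ stabilizes. If $d\ne e$ it tends to a coordinate axis, and the minimizing monomial of $G$ is eventually a unique vertex. If $d=e$ the direction is constant and the minimizers form a fixed face $\Sigma$. In both cases the face contribution is
\[
z^{m_n}\sum_{(i,j)\in\Sigma}c'_{i,j}\bigl(\alpha^i\beta^j\bigr)^{d^n}.
\]
For the remaining monomials I will show their weights are at least $m_n+\delta\, d^n$ for some $\delta>0$ (or at least $m_n+1$ in the vertex case), so they cannot cancel the face term beyond its order. Applying Proposition~\ref{Multiplicity1} along $N=d^n$ with $P_{i,j}=\alpha^i\beta^j$ then shows the face sum has order $O(1)$, unless some ratio $\alpha^{i-i'}\beta^{j-j'}$ is a root of unity $\zeta$. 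In that exceptional case $A^{s}=\zeta z^{\ast}B^{t}$ after clearing the $z$-powers. This says $\Phi(C)$ is a monomial curve $\{x^{s}=\zeta' y^{t}\}$ in B\"ottcher coordinates, which is invariant under $\phi^N=(z^{d^N},z^{e^N})$ for suitable $N$ (root of unity). Transporting back, $C$ is preperiodic under $\varphi$, contradicting the hypothesis.

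\textbf{Cases $d=1$ or $e=1$.} Here I replace B\"ottcher coordinates by Koenigs linearization, $f^n(\Psi(w))=\Psi(\lambda^n w)$, when the multiplier $\lambda$ satisfies $0<|\lambda|\ne1$ or is Brjuno. The same argument then goes through: the face sum becomes $\sum c'\lambda^{ni}\mu^{nj}(\text{unit})^{\ast}$, whose coefficients are linear recurrences in $n$, and Skolem--Mahler--Lech is applied as in the proof of Proposition~\ref{Multiplicity1}. Mixed cases, with one coordinate superattracting and the other linear, combine the two normal forms. In these cases $m_n$ is dominated by the bounded coordinate, so the relevant face is eventually a single vertex or a fixed one-dimensional face, and the same recurrence argument applies.

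\textbf{Main obstacle.} I expect the main difficulty to be the non-linearizable multiplier-one cases: parabolic germs or Cremer points, where no conjugacy to a linear map exists. There I would first try Arnold's original approach, treating the Taylor coefficients of $F(f^n(a),g^n(b))$ as functions of $n$ and extracting an exponential-polynomial or polynomial structure, for instance via the Fatou coordinate or via the polynomial-in-$n$ coefficient expansion of Lemma~\ref{Multiplicity2}. I would then use Skolem--Mahler--Lech to show that vanishing of the first $K$ coefficients for infinitely many $n$ forces an identity in $n$, which in turn forces preperiodicity of $C$.
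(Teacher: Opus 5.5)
\textbf{Where your argument stands.} Your treatment of $d=e\ge 2$ is the paper's argument: B\"ottcher conjugation, Lemma~\ref{Multiplicity6} to preserve the weighted minimum, restriction to the minimizing face, Proposition~\ref{Multiplicity1}, and the root-of-unity alternative forcing preperiodicity.

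The case $d\neq e$ is easier than you make it. Your own lower-bound computation shows that each monomial $c_{i,j}f^n(a)^ig^n(b)^j$ has order exactly $d^nk_1i+e^nk_2j$. For $d\neq e$ this weight has a unique minimizer on $S$ once $n$ is large, so the order equals $m_n$ exactly, with no conjugation; this is how the paper proceeds. The point matters for your mixed cases ($d=1$, $e\ge 2$ or vice versa). There you invoke a linearization of the multiplier-one coordinate, which need not exist (parabolic or Cremer germs), whereas the single-vertex argument needs no normal form at all.

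\textbf{The gap: $d=e=1$.} Here $m_n$ is constant, so the theorem says exactly that $\mult_{(0,0)}(\varphi^n(C),C')$ stays bounded, and your proposal does not prove this. Koenigs/Brjuno covers only linearizable germs. Even there you do not say why a single residue class works for all Taylor coefficients at once. The parabolic and Cremer cases are left as a plan.

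The paper closes this case by citing Arnold's theorem \cite[Theorem 1]{Arnold-bounded-mult}. It says that, for a germ of biholomorphism, these intersection multiplicities are uniformly bounded over all $n$ for which they are finite. Finiteness fails for only finitely many $n$: the branches $\varphi^n(C)$ are pairwise distinct by non-preperiodicity, and $C'$ has finitely many branches.

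\textbf{How your own sketch can be completed.} Your last paragraph can also be finished without any normal form, in five steps.
\begin{enumerate}
\item Set $\lambda=f'(0)\neq0$ and $\mu=g'(0)\neq0$. The Taylor coefficients of $f^n$ and $g^n$ are exponential polynomials in $n$ whose roots lie in the group $\Gamma=\langle\lambda,\mu\rangle\subset\C^*$. This follows by induction from $c_m(n+1)=\lambda c_m(n)+E_m(n)$, where $E_m$ involves only lower coefficients. Hence each Taylor coefficient of $F(f^n(a),g^n(b))$ is also such an exponential polynomial.
\item $\Gamma$ is finitely generated, so its torsion subgroup is finite, of order $l$ say. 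A single modulus $l$ then makes every one of these exponential polynomials nondegenerate on every residue class mod $l$.
\item Suppose the orders are unbounded along some sequence $n_t\to\infty$. Pick a residue class containing infinitely many $n_t$. Each coefficient vanishes at all but finitely many of those $n_t$, so by Skolem--Mahler--Lech it vanishes identically on that class.
\item Hence $F(f^n(a),g^n(b))\equiv 0$ for all $n$ in the class, that is, $\varphi^n(C)\subseteq C'$ for infinitely many $n$.
\item This contradicts non-preperiodicity, because the $\varphi^n(C)$ are pairwise distinct and $C'$ has finitely many branches.
\end{enumerate}
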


\begin{proof}
Since $\vphi^n: C \to \vphi^n(C)$ is injective, $$(f^{\circ n}(a(z)), g^{\circ n}(b(z)))$$ is a good parametrization for $\vphi^n(C)$. Thus we have
$$i_{(0,0)}(\vphi^n(C),C') = \ord_z \left(\sum_{(i,j) \in S} c_{i,j} f^{\circ n}(a(z))^i g^{\circ n}(b(z))^j \right).$$

Suppose we have proven the theorem for an iterate $\varphi^M$ instead of $\varphi$. Observe that replacing $C$ with $\varphi^j(C)$, changes $(k_1,k_2)$ to $(d^jk_1, e^jk_2)$. So, for any $0 \le  j \le M - 1$ we must have
\begin{align}
i_{(0,0)}(\vphi^{Mn+j}(C) , C') = i_{(0,0)}(\vphi^{Mn}(\varphi^{j}(C)) , C') &= \min_{(i,j) \in S} \{d^{Mn}d^j k_1 i + e^{Mn}e^j k_2 j\} + O(1) \notag \\
&= \min_{(i,j) \in S} \{d^{Mn + j} k_1 i + e^{Mn + j} k_2 j\} + O(1) \notag.
\end{align}

Therefore, it suffices to prove Theorem \ref{Multiplicity4} for an iterate $\varphi^M$ instead of $\varphi$.

\textbf{Case 1:} $d \not = e$. Let us assume $d > e$. The order of vanishing of each term is given by $d^n k_1 i + e^n k_2 j$. Then since $d > e$, for suitably large values of $n$ there is a unique minimal value among $d^n i k_1 + e^n j k_2$ with $(i,j) \in S$. Indeed, it is given by $(i',j')$ where $i'$ minimizes all values of $i$ that appear in $S$, and $j'$ minimizes all values of $j$ that the pair $(i',j)$ appears in $S$. Thus for large enough $n$, we have
$$\ord_z \left(\sum_{i,j \in S} c_{i,j} f^n(a(z))^i g^n(b(z))^j \right) = d^n i' k_1 + e^n k_2 j' = \min_{(i,j) \in S} \{d^n k_1 i + e^n k_2 j\}$$
as desired.
\par 
\textbf{Case 2:} $d = e = 1$. This case follows from \cite[Theorem 1]{Arnold-bounded-mult}. 
\par 
\textbf{Case 3:} $d = e \geq 2$. Since $0$ is a fixed point for both $f$ and $g$ of multiplicity at least 2, we can conjugate them to the form $z^d$. If $\sigma_1,\sigma_2$ are the conjugacies for $f$ and $g$ respectively, we may formally conjugate $(f,g), (a(z),b(z))$ and $F(x,y)$ by $(\sigma_1,\sigma_2)$ and thus assume that $f(z) = g(z) = z^d$. We may assume so since, by Lemma \ref{Multiplicity6}, this does not change the minimum of $\{d^n k_1 i + e^n k_2 j\}$. Thus we would like to compute
$$\ord_z \left( \sum_{(i,j) \in S} c_{i,j} a(z)^{d^n i} b(z)^{d^n j} \right) = \ord_z \left( \sum_{(i,j) \in S} c_{i,j} \left(a(z)^i b(z)^j \right)^{d^n} \right).$$
Let $L= \min_{(i,j) \in S} \{k_1 i + k_2 j\}$. Then $z^L \mid a(z)^i b(z)^j$ for any $(i,j) \in S$. Factoring it out, we wish to show that 
$$\ord_z \left( \sum_{(i,j) \in S} \left(\frac{a(z)^i b(z)^j}{z^L} \right)^{d^n} \right)$$
is uniformly bounded over all $n \geq 1$. Let $S'$ be the subset of pairs $(i,j)$ for which $k_1 i + k_2 j = L$. Then it suffices to show that 
$$\ord_z \left( \sum_{(i,j) \in S'} \left(\frac{a(z)^i b(z)^j}{z^L} \right)^{d^n} \right)$$
is uniformly bounded over all $n \geq 1$ as the other terms will vanish to an order that goes to $\infty$ as $n$ goes to $\infty$. Let $A_{i,j}(z) = \frac{a(z)^ib(z)^j}{z^L}$. Then for $(i,j) \in S'$, we have $A_{i,j}(0) \not = 0$. Thus we are in the position to apply Proposition \ref{Multiplicity1} and deduce that the order of vanishing is bounded, unless 
$$\frac{a(z)^i b(z)^j}{z^L} = \zeta' \frac{a(z)^{i'} b(z)^{j'}}{z^L}$$
for some root of unity $\zeta'$ and distinct pairs $(i,j),(i',j')$. This implies that $\frac{a(z)^i}{b(z)^j}$ is a root of unity $\zeta$ for some integers $i,j$ that are both non-zero. 
\par 
If this is the case, then we can find an infinite sequence of positive integers $\{n_i\}$ such that $\zeta^{d^{n_i}}$ are all equal, say to $\tilde{\zeta}$. Then $(a(z)^{d^{n_i}},b(z)^{d^{n_i}})$ give rise to a solution to $x^i = \tilde{\zeta} y^j$ and hence form an analytic branch of it, since in the case of $d = e \geq 2$, the conjugacy actually converges in a neighbourhood. Since there are only finitely many analytic branches, there must exist $i,j$ such that $(a(z)^{d^{n_i}}, b(z)^{d^{n_i}})$ and $(a(z)^{d^{n_j}}, b(z)^{d^{n_j}})$ define the same branch. Conjugating back, this implies that $\vphi^{n_i}(C) = \vphi^{n_j}(C)$ and so $C$ is preperiodic, which is a contradiction. Thus 
$$\ord_z \left(\sum_{(i,j) \in S'} \left(\frac{a(z)^i b(z)^j}{z^L} \right)^{d^n} \right)$$
is bounded independently of $n$ as desired.
\end{proof}
We end this section by noting that Theorem \ref{IntroIntersectionTheorem1} follows immediately from Theorem \ref{Multiplicity4}. 

\section{Finiteness of Backward Orbit on Curves} \label{sec:BackwardOrbit}
The aim of this section is to prove the following result which will be needed in the proof of Theorem \ref{IntroTheorem1}. 

\begin{theorem} \label{BackwardOrbit1}
Let $C \subset \bP^1 \times \bP^1$ be an ample irreducible curve over $\mathbb{C}$ and $\varphi = (f,g) :(\bb{P}^1)^2 \to (\bb{P}^1)^2$ a product endomorphism of rational maps over $\mathbb{C}$ with degrees $d_1,d_2 \geq 2$. Let $p \in (\bb{P}^1)^2(\bb{C})$ be a non-periodic point for $\varphi$ such that $p \in \varphi^n(C)$ for infinitely many positive integers $n$. Then $C$ is preperiodic under $(f^a,g^b)$ for some $a,b \geq 1$. 
\end{theorem}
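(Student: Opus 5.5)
The plan is to turn the hypothesis into the statement that $C$ contains infinitely many points of the backward orbit of $p$. Then we use heights and a dynamical Bogomolov-type theorem for split maps to conclude that $C$ is special.

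Write $p=(\alpha,\beta)$. For each of the infinitely many $n$ with $p\in\varphi^n(C)$, choose $q_n=(x_n,y_n)\in C$ with $\varphi^n(q_n)=p$, so that $f^n(x_n)=\alpha$ and $g^n(y_n)=\beta$.

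\textbf{Step 1: the $q_n$ are pairwise distinct.} Suppose $q_n=q_m$ with $n<m$. Then $p=\varphi^m(q_n)=\varphi^{m-n}(p)$, so $p$ is periodic, contrary to hypothesis. Hence $C$ contains infinitely many distinct points of $\bigcup_n\varphi^{-n}(p)$.

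\textbf{Step 2: $\alpha$ and $\beta$ are non-exceptional.} Suppose $\alpha$ is exceptional for $f$. Then $\bigcup_n f^{-n}(\alpha)$ is a finite set, so the $x_n$ take only finitely many values. Since $C$ is ample, each fibre $C\cap(\{x\}\times\mathbb P^1)$ is finite, so only finitely many $q_n$ are possible, contradicting Step 1. The same argument applies to $\beta$. The same fibre argument also shows that both coordinates $x_n$ and $y_n$ take infinitely many distinct values.

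\textbf{Step 3: the $q_n$ have small height.} First assume everything is defined over a number field $K$. Let $\widehat h_f$ and $\widehat h_g$ be the Call--Silverman canonical heights. Then
\[
\widehat h_f(x_n)+\widehat h_g(y_n)=\frac{\widehat h_f(\alpha)}{d_1^{\,n}}+\frac{\widehat h_g(\beta)}{d_2^{\,n}}\longrightarrow 0 .
\]
So $C$ carries an infinite sequence of distinct points whose height with respect to the adelic metrized bundle $\pi_1^*\mathcal O(1)_f\otimes\pi_2^*\mathcal O(1)_g$ tends to $0$. In the general case over $\mathbb C$, I would spread out over a finitely generated field. Then either specialize, keeping the hypotheses by Call--Silverman specialization, or work directly with Moriwaki heights over the finitely generated field, where the same identity holds.

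\textbf{Step 4: apply equidistribution and a Bogomolov-type result.} By arithmetic equidistribution (Yuan; Yuan--Zhang in the finitely generated case), the small points $q_n$ equidistribute on $C$ with respect to the restriction of both $\pi_1^*\mu_f$ and $\pi_2^*\mu_g$. This is the input behind the dynamical Bogomolov / Manin--Mumford results for split maps, namely Ghioca--Nguyen--Ye and Mavraki--Schmidt \cite{Myrto-Schmidt}. Those results then say that an irreducible curve with infinitely many small points is special: either a horizontal or vertical fibre, or a curve preperiodic under $(f^a,g^b)$. Horizontal and vertical fibres are excluded because $C$ is ample. For an ample $C$, the equidistribution statement also forces the measures to match, which should force $d_1^{a}=d_2^{b}$ for suitable $a,b$, giving preperiodicity under $(f^a,g^b)$.

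\textbf{Main obstacle.} I expect Step 4 to be the hardest, in two respects. First, the precise dynamical Bogomolov statement must be available for split rational maps of possibly different degrees, with the conclusion phrased as preperiodicity under $(f^a,g^b)$ rather than a weaker ``$\mu_f$ and $\mu_g$ correspond'' statement. Second, the result must be transported from number fields to $\mathbb C$. For the first issue, I would try to reduce to equal degrees by passing to iterates once the measure comparison gives multiplicative dependence of $d_1$ and $d_2$. I would then invoke the equal-degree classification, in the spirit of Theorem \ref{ScanlonTheorem1} for the polynomial case.
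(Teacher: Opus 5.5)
Your number-field case (distinct preimages $q_n\in C$, $\widehat h_f(x_n)+\widehat h_g(y_n)\to 0$, then the Ghioca--Nguyen--Ye dynamical Bogomolov theorem for split maps over $\overline{\mathbb Q}$) is exactly the base case of the paper's proof. The gap is the passage from $\overline{\mathbb Q}$ to $\mathbb C$. You list it as an obstacle but do not resolve it, and it is where the real work lies. Neither of your suggested routes works as stated.

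\textbf{Specialization.} Call--Silverman specialization may keep $p_t$ non-periodic. It gives no control over the non-preperiodicity of $C$, which is the hypothesis you must carry down in a proof by contradiction (equivalently, the conclusion you must lift back).
\begin{itemize}
\item The parameters $t$ for which $C_t$ is $(f_t^a,g_t^b)$-preperiodic form only a countable union of algebraic subsets, indexed by preperiod and period.
\item These indices are unbounded. Already $\{y=tx\}$ under $(z^2,z^2)$ is preperiodic at every root of unity $t$, with arbitrarily large preperiod and period.
\item A countable union of proper closed subsets can contain every algebraic point of the base.
\end{itemize}
So knowing that every good specialization $C_t$ is preperiodic says nothing about $C$ without a uniform bound. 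Such a bound is essentially a Bogomolov statement over the function field.

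\textbf{Moriwaki/Yuan--Zhang heights.} This route needs exactly that statement: a dynamical Bogomolov theorem for split maps of possibly different degrees over a finitely generated field. It is not available by citation. The Ghioca--Nguyen--Ye Bogomolov theorem is over $\overline{\mathbb Q}$; over general characteristic-zero fields they have Manin--Mumford, which concerns preperiodic points rather than small points. As the paper notes, Mavraki--Schmidt's geometric version requires equal degrees.

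\textbf{How the paper closes the gap.} It inducts on $\operatorname{trdeg}(K/\overline{\mathbb Q})$, writing $K=k(B)$ for a curve $B$ over an algebraically closed $k$ of smaller transcendence degree and using geometric heights.
\begin{itemize}
\item If $f$ and $g$ are both non-isotrivial, the Mavraki--Schmidt--Wilms height-coincidence theorem shows that, up to finitely many exceptions, points $(x,y)\in C$ with $\widehat h_f(x)=0$ have $\widehat h_\varphi(x,y)=0$. Hence $C$ carries infinitely many preperiodic points, and the Ghioca--Nguyen--Ye Manin--Mumford theorem gives $(f^a,g^b)$-preperiodicity.
\item If $f$ is isotrivial, height zero no longer forces preperiodicity. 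A bound on fields of definition of preperiodic points (Mavraki--Schmidt, Cor.~4.9) forces $g$ to be isotrivial as well and the conjugated curve $(M_1,M_2)(C)$ to be defined over $k$. The transcendence degree then drops and induction applies.
\end{itemize}

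Separately, your fallback of reducing to equal degrees and invoking Medvedev--Scanlon would not apply, since $f,g$ are rational maps here, not polynomials. It is also unnecessary, because the Ghioca--Nguyen--Ye theorem already yields preperiodicity under $(f^a,g^b)$.
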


Clearly we may reduce to the case where $(f,g,C)$ are all defined over a field $K$ which is finitely generated over $\ovl{\bb{Q}}$. If $K = \ovl{\mathbb{Q}}$, then Theorem \ref{BackwardOrbit1} follows easily from the dynamical Bogomolov conjecture for $(\mathbb{P}^1)^n$ proven by Ghioca--Nguyen--Ye \cite{GNY18, GNY19}. 
\par 
In general, we would like to apply the strategy used by Mavraki and the second named author in \cite[Section 5]{MY25} by reducing to the case of function fields over a curve. We thus need a geometric dynamical Bogomolov conjecture for $(\mathbb{P}^1)^2$. Such a result was proven by Mavraki--Schmidt \cite{Myrto-Schmidt}, but only when $f,g$ are of the same degree. Luckily for us, it is straightforward to modify their argument to the case of different degrees, which we will do so now. 
\par 
Let $K = k(B)$ be the function field of a smooth projective curve over an algebraically closed characteristic zero field $k$ and let $\varphi = (f,g)$ be our product endomorphism on $(\bb{P}^1)^2$ that is defined over $K$. For $p = (x,y) \in (\bb{P}^1)^2(\ovl{K})$, we set $\h_{\varphi}(p) := \h_{f}(x) + \h_{g}(y)$. 

\begin{proposition} \label{GeometricBogomolov2}
Assume that both $f$ and $g$ are not isotrivial. Let $C \subseteq (\bb{P}^1)^2$ be an ample irreducible curve that contains a generic sequence of points $\{p_n\}$ with $\h_{\varphi}(p_n) \to 0$. Then $C$ is $(f^a,g^b)$-preperiodic for some $a,b \geq 1$.  
\end{proposition}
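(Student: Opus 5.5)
We would follow the strategy of Mavraki--Schmidt \cite{Myrto-Schmidt} for the equal-degree case and adapt the one place where equal degrees are used. The goal is to show that $C$ is invariant, after passing to a forward image, under some $(f^a,g^b)$ with $d^a = e^b$ after possibly first establishing multiplicative dependence of the degrees. Concretely, the plan has three steps. First, use equidistribution of small points on the curve. The hypothesis gives a generic sequence $p_n=(x_n,y_n)\in C$ with $\h_f(x_n)+\h_g(y_n)\to 0$. Applying the equidistribution theorem for small points with respect to the adelic (Berkovich) measures at every place $v$ of $K=k(B)$ (as in \cite{Myrto-Schmidt}, using Yuan--Gubler/Chambert-Loir type results for the semipositive metrized bundles $\pi_1^*\overline{O(1)}_f$ and $\pi_2^*\overline{O(1)}_g$), we get for every place $v$ the equality of measures
\[
(\pi_1)_*\mu_{C,v} = \mu_{f,v}, \qquad (\pi_2)_*\mu_{C,v} = \mu_{g,v},
\]
where $\mu_{C,v}$ is the limit distribution of Galois orbits of $p_n$ on $C^{an}_v$. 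More precisely, one shows that $\h_{\varphi}$ restricted to $C$ has essential minimum zero, which forces the canonical heights of $C$ with respect to both $\pi_1^*\overline{O(1)}_f$ and $\pi_2^*\overline{O(1)}_g$ (and mixed combinations) to vanish; Zhang's inequality with the ample class then gives that the two pullback measures on $C$ are proportional, i.e. $\pi_1^*\mu_{f,v}\big|_C$ and $\pi_2^*\mu_{g,v}\big|_C$ agree up to the constant normalizations $1/b$ and $1/a$, where $(a,b)$ is the bidegree. This step is insensitive to $d\neq e$ because the heights $\h_f$ and $\h_g$ are each separately normalized.

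Second, exploit non-isotriviality to get a place of bad reduction where the measures are rigid. Since $f,g$ are not isotrivial, there is a place $v$ at which, after base change and change of coordinates, the Julia sets are non-classical; following \cite{Myrto-Schmidt} we pass to a place (or to the complex fibers over $B$, via specialization to a generic $t\in B(k)$ with $k=\C$) where the equality of measures becomes an equality of complex maximal entropy measures: for generic parameters $t$, the specialized curve $C_t$ satisfies $\pi_1^*\mu_{f_t}|_{C_t}\propto \pi_2^*\mu_{g_t}|_{C_t}$. Then the local argument near a point of $C_t$ over the Julia sets shows that $C_t$ is locally a graph of a symmetry-compatible correspondence between $J(f_t)$ and $J(g_t)$ pushing $\mu_{f_t}$ to $\mu_{g_t}$. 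By the rigidity results for measures of maximal entropy (Levin--Przytycki / Dujardin--Favre--Gauthier type classification, which is what \cite{Myrto-Schmidt} invoke), non-exceptional $f_t,g_t$ admitting such a correspondence must share a common iterate up to symmetry, and in particular $d^{l_1}=e^{l_2}$ for some $l_1,l_2\ge1$. Exceptional parameters are excluded since non-isotrivial families are non-exceptional for generic $t$.

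Third, once $d^{l_1}=e^{l_2}$, replace $\varphi$ by $\psi=(f^{l_1},g^{l_2})$, whose two coordinates have equal degree $D$. Canonical heights are unchanged ($\h_{f^{l_1}}=\h_f$), so $C$ still contains a generic sequence of $\h_\psi$-small points, and \cite[Theorem]{Myrto-Schmidt} for equal degrees gives that $C$ is weakly $\psi$-special; since $f$ (hence $f^{l_1}$) is non-exceptional, this means $C$ is $\psi$-preperiodic, i.e. $(f^{a},g^{b})$-preperiodic with $a=l_1 m$, $b=l_2 m$. The main obstacle is the second step: proving multiplicative dependence of $d$ and $e$ from the measure equality. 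I would try first to reduce it to the Böttcher analysis of Section~\ref{sec:Bottcher}: near $(\infty,\infty)\in C_t$ (ampleness guarantees $C_t$ meets the fibre over $\infty$, and totally invariant $\infty$ lets us iterate), the equality of Green functions $b\,G_{f_t}\circ\pi_1 = a\,G_{g_t}\circ\pi_2$ on $C_t$ forces a branch to be given by $\Phi_{g_t}=\theta\,\Phi_{f_t}^{i/j}$, so Theorem~\ref{BottcherTheorem1} together with Lemma~\ref{BottcherLemma3} yields $d^{l_1}=e^{l_2}$ and even preperiodicity directly.
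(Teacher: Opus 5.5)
\textbf{There is a genuine gap at your Step 2.} Step 1 is fine via equidistribution: at every place $v$ of $K$ you get proportionality of $\pi_1^*\mu_{f,v}|_C$ and $\pi_2^*\mu_{g,v}|_C$. Step 3 also works: once $d^{l_1}=e^{l_2}$ is known, pass to $(f^{l_1},g^{l_2})$, which has the same canonical heights, and apply the equal-degree theorem of \cite{Myrto-Schmidt}. But everything depends on Step 2, and as written it does not go through.

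\begin{enumerate}
\item The equalities from Step 1 live on the Berkovich curves $C_v^{an}$ at the places $v\in B$ of $K=k(B)$, and all of these places are non-Archimedean. They do not specialize to the complex statement $\pi_1^*\mu_{f_t}|_{C_t}\propto\pi_2^*\mu_{g_t}|_{C_t}$ for generic $t\in B(\C)$. A fibrewise complex statement would have to be derived directly from $\h(C)=0$ by a different argument, for instance reading heights over $\C(B)$ as masses of fibred Green currents on the total space and adding a Hodge-index step. You do not supply this.
\item Even granting the fibrewise equality, you still need a rigidity theorem for maximal-entropy measures that covers unequal degrees. What you need is that $C_t$ itself is $(f_t^m,g_t^n)$-preperiodic, not that $f_t$ and $g_t$ share an iterate. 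You only gesture at such a result, and this is exactly the difficulty you flag and leave open.
\item Non-isotrivial does not imply generically non-exceptional for rational maps. Latt\`es maps attached to a non-isotrivial elliptic curve over $K$ are non-isotrivial and exceptional at every $t$. This also undercuts your appeal to non-exceptionality in Step 3.
\item The B\"ottcher fallback does not close the gap either.
\begin{itemize}
\item Equality of measures on $C_t$ only makes the relevant combination of $G_{f_t}\circ\pi_1$ and $G_{g_t}\circ\pi_2$ harmonic away from the points of $C_t$ over $\infty$, not identically zero. So no relation $\Phi_{g_t}=\theta\,\Phi_{f_t}^{i/j}$ on a branch follows.
\item Ampleness only gives a point of $C_t$ on $\{\infty\}\times\bP^1$, not at $(\infty,\infty)$ or at any superattracting pair. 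In Section~\ref{sec:Degree} that point came from the unbounded-degree hypothesis via Lemma~\ref{Degree1}, which is unavailable here.
\item Section~\ref{sec:Bottcher} requires polynomials.
\end{itemize}
\end{enumerate}

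\textbf{The missing idea is that you never have to prove multiplicative dependence of the degrees by hand.} The paper reduces to a Manin--Mumford statement instead:
\begin{enumerate}
\item Since $C$ carries a generic sequence of $\h_\varphi$-small points, the height-coincidence theorem \cite[Theorem 1.1]{MSW23} gives the following: for all but finitely many $p=(x,y)\in C(\ovl{K})$, $\h_f(x)=0$ implies $\h_\varphi(p)=0$.
\item Take $x\in\operatorname{Prep}(f)$ and any point $(x,y)\in C$ above it. Then $\h_g(y)=0$, so $y$ is preperiodic because $g$ is not isotrivial. Hence $C$ contains infinitely many $\varphi$-preperiodic points.
\item The dynamical Manin--Mumford theorem for split maps \cite[Theorem 1.1]{GNY19} then gives $(f^a,g^b)$-preperiodicity directly. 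It is an algebraic statement, so it applies over $\ovl{K}$ by the Lefschetz principle, and it already allows $\deg f\neq\deg g$. The relation $d^a=e^b$ comes out as a consequence.
\end{enumerate}
This route needs no complex specialization, no rigidity of maximal-entropy measures, and no appeal to the equal-degree result of \cite{Myrto-Schmidt}. Your route, if completed, would essentially re-prove the unequal-degree part of \cite{GNY19} in families.
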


\begin{proof}
We apply \cite[Theorem 1.1]{MSW23} to obtain that outside of finitely many exceptions for $p = (x,y) \in C(\ovl{K})$, we have 
$$\h_{f}(x) = 0 \implies \h_{\varphi}(p) = 0.$$
It follows that $C$ contains infinitely many preperiodic points $p = (x,y)$ by choosing $x$ to be preperiodic, as $f,g$ are both not isotrivial and hence $\h_g(y) = 0$ implies that $y$ is preperiodic. We now apply \cite[Theorem 1.1]{GNY19}, which gives us $a,b \geq 1$ such that $C$ is preperiodic under $(f^a,g^b)$ as desired. 
\end{proof}

We now handle the case where at least one of $f$ or $g$ is isotrivial. Here, we say $f$ is isotrivial to mean there exists a mobius transformation $M$ such that $M \circ f \circ M^{-1}$ is defined over $\ovl{k}$.  

\begin{proposition} \label{GeometricBogomolov3}
Assume that $f$ is isotrivial. Let $C \subseteq (\bb{P}^1)^2$ be an ample irreducible curve that contains a sequence of points $\{p_n\}$ with $\h_{\varphi}(p_n) \to 0$. Then $g$ is also isotrivial. Furthermore if $M_1,M_2$ are m\"obius transformations such that $M_1 \circ f \circ M_1^{-1}$ and $M_2 \circ g \circ M_2^{-1}$ are both defined over $k$, then $(M_1,M_2)(C)$ is also defined over $k$.
\end{proposition}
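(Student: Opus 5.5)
First I would conjugate by $(M_1,M_2)$ to assume $f$ is defined over $k$. This conjugation replaces $C$ by $(M_1,M_2)(C)$ and $\h_\varphi$ by the canonical height of the conjugated map, which is the same function composed with $(M_1,M_2)^{-1}$; so the hypotheses are preserved and the final claim becomes "$C$ is defined over $k$". For a map defined over the constant field $k$, the canonical height $\h_f$ is the naive Weil height on $\bP^1_K$. Thus $\h_f(x_n)\to 0$ for $p_n=(x_n,y_n)$. The naive height takes values in $\frac{1}{D}\mathbb Z_{\ge 0}$ on points of degree $\le D$, and the degree of $x_n$ over $K$ is bounded by the degree of $\pi_1|_C$ applied to $y_n$ in a suitable way. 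So I would first pass to a subsequence and use ampleness: the projection $\pi_2\colon C\to\bP^1$ is finite, hence $[K(x_n):K]\le b\,[K(y_n):K]$, where $(a,b)$ is the bidegree of $C$, and symmetrically. This alone does not bound degrees. I would instead use the height inequality on $C$: since $C$ is ample, $h_{(1,0)}|_C$ and $h_{(0,1)}|_C$ are comparable up to constants. That is, $h_{\bP^1}(\pi_2(p))\le c\,h_{\bP^1}(\pi_1(p))+O(1)$ on $C$, by functoriality of Weil heights for the ample classes $(1,0)$ and $(0,1)$ restricted to $C$. This gives $h(y_n)=O(1)$, which is not yet enough.

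The sharper route I would take is the geometric one. A point $x\in\bP^1(\ovl K)$ has naive height $0$ iff $x\in\bP^1(k)$, and small naive height means $x$ corresponds to a multisection of $B\times\bP^1\to B$ of small degree relative to its degree over $B$. I would first show that $C$ is not defined over $k$ up to isotriviality leads to a contradiction as follows. Spread $C$ out to a surface $\mathcal C\subset B\times\bP^1\times\bP^1$ and take the fiber $C_b$ at a general $b\in B$. If $C$ is not constant, the family $\{C_b\}$ varies, and a point of $C$ with $x$-coordinate in (or near) $\bP^1(k)$ gives an $\ovl K$-point. Then $\h_g(y_n)\to0$ together with $\h_f(x_n)\to 0$ gives a generic small sequence for $\varphi$ on $C$.

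The cleanest formulation of the key step is to apply the Mavraki--Schmidt--Wilms height inequality \cite[Theorem 1.1]{MSW23} as in Proposition \ref{GeometricBogomolov2}, but in the form comparing $\h_f(x)$ and $\h_g(y)$ on $C$. Outside finitely many points, $\h_f(x)=0$ forces $\h_\varphi(p)=0$. Taking all $x\in\bP^1(k)$ (infinitely many, since $f$ is constant and these points have height zero) then gives infinitely many $y$ with $\h_g(y)=0$. If $g$ were not isotrivial, these $y$ would be $g$-preperiodic by Baker's theorem, and the $x$'s would be arbitrary constant points. Then $C$ would contain infinitely many points in $\bP^1(k)\times\mathrm{Prep}(g)$. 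I would get the contradiction by specializing: $\mathrm{Prep}(g)$ has bounded degree fibers over each constant $x$, while the $x$ range freely over $k$. So $C$ would contain infinitely many points $(x,y)$ with $y$ in a set of height zero for a non-isotrivial $g$. By Benedetto/Baker, that set has no infinite family of bounded degree, and the degree of $y$ over $K$ is bounded by $a$ since $x\in k$. This is a contradiction, so $g$ is isotrivial.

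Next conjugate $g$ to be defined over $k$ as well. Now $\h_\varphi$ is the naive height on $(\bP^1)^2$, and $C$ contains infinitely many points of height $0$, namely all points over $x\in\bP^1(k)$ with $y$ of height $0$. A point of height $0$ and bounded degree lies in $(\bP^1)^2(k)$, since a height-zero point over a function field is constant. Hence $C$ contains infinitely many $k$-points, and an irreducible curve with Zariski dense $k$-points is defined over $k$ (take Galois/derivation invariance of its ideal). The main obstacle is making the MSW step rigorous when $f$ is isotrivial, since their theorem may assume non-isotriviality. If it fails, I would instead argue directly: the hypothesis $\h_\varphi(p_n)\to0$ with $\h_f$ naive and $C$ ample, plus the Call--Silverman comparison $\h_g=h+O(1)$, yields points with both coordinates of small naive height. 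Small naive height forces constant points after a height-gap argument on bounded-degree points via the finite projections of $C$.
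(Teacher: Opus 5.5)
Your main line (the third and fourth paragraphs) is essentially the paper's proof. You apply \cite[Theorem 1.1]{MSW23} so that first coordinates of $\h_f$-height zero force $\h_g(y)=0$, derive isotriviality of $g$ from the resulting infinitely many bounded-degree $g$-preperiodic points, and then conclude because $(M_1,M_2)(C)$ contains infinitely many $k$-points. The paper differs only in citing \cite[Corollary 4.9]{Myrto-Schmidt} where you cite Baker/Benedetto, and in using the $f$-preperiodic $x$ rather than all of $M_1^{-1}(\bP^1(k))$. It also applies \cite{MSW23} to isotrivial $f$ without comment, so the obstacle you flag is left unaddressed there too. Your first two paragraphs and your final fallback can simply be dropped; the fallback would not work anyway, since $\h_g=h+O(1)$ gives only $h(y_n)=O(1)$, not small naive height.
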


\begin{proof}
Again by \cite[Theorem 1.1]{MSW23}, outside of finitely many exceptions for $p = (x,y) \in C(\ovl{K})$, we have
$$\h_f(x) = 0 \implies \h_{\varphi}(p) = 0.$$
Now assume without loss of generality that $k = \overline k$. Since $f$ is isotrivial, all preperiodic points of $f$ are defined over a finite extension $K'$ of $K$. Each such preperiodic point $x$ corresponds to a $\varphi$-preperiodic point $p$ on $C$ and hence a $g$-preperiodic point $y$. Furthermore as $(x,y)$ lie on a curve $C$, it follows that $y$ is defined over a finite extension $L$ of $K'$ with $[L:K']$ bounded solely in terms of $\deg C$. Applying \cite[Corollary 4.9]{Myrto-Schmidt}, it follows that $g$ is also isotrivial. 
\par 
Now assuming that $g$ is isotrivial, it follows that $(M_1,M_2)(C)$ contains infinitely many $(M_1 \circ f \circ M_1^{-1}, M_2 \circ g \circ M_2^{-1})$-preperiodic points and hence infinitely many points of $(\bb{P}^1)^2(k)$. Then $(M_1,M_2)(C)$ must be defined over $k$ as desired.
\end{proof}

We are now ready to begin the proof of Theorem \ref{BackwardOrbit1}. 

\begin{proof}[Proof of Theorem \ref{BackwardOrbit1}]
We may assume that $f,g$ and $C$ are all defined over $K$ which is finitely generated over $\ovl{\mathbb{Q}}$. We will induct on the transcendence degree of $K$. 
\par 
We first handle the case where $\trdeg(K) = 0$, i.e. $K \subseteq \ovl{\bb{Q}}$. Then there exist infinitely many $p_n$ such that $\vphi^n(p_n) = p$ and $p_n \in C$. Since $p$ is non-periodic, each $p_n$ is distinct and if $p_n = (x_n,y_n)$ and $\deg f, \deg g \geq 2$, we have 
$$\lim_{n \to \infty} \widehat{h}_f(x_n) = \widehat{h}_g(y_n) = 0.$$
Thus by \cite[Theorem 1.1]{GNY19}, it must be that $C$ is preperiodic under $(f^a,g^b)$ as desired. 
\par 
We now handle the case where $\trdeg(K) = m+1$, assuming the case of $\trdeg(K) = m$. Taking the compositum with $\ovl{\bb{Q}}$, we may assume that $K = \ovl{\bb{Q}}(V)$ for some normal projective irreducible variety $V$ by \cite[1.4.10]{BG06}. Pick $m$ algebraically independent elements $t_1,\ldots,t_m$ and let $k$ be the algebraic closure of $t_1,\ldots,t_m$ inside $K$. Then $K$ is finitely generated and of transcendence degree one over $k$. Since $k$ is algebraically closed in $K$, it follows that $K = k(B)$ for a smooth projective curve $B$ over $k$. We may replace $k$ with $\ovl{k}$ and hence assume that $k$ is algebraically closed.
\par 
Now as in the case of $\ovl{\bb{Q}}$, we have a sequence of distinct points $\{p_n\}$ with $\h_{\vphi}(p_n) \to 0$ that lie on $C$. If $f,g$ are both not isotrivial, it follows that $C$ is $(f^a,g^b)$-preperiodic by Proposition \ref{GeometricBogomolov2}. If $f,g$ are isotrivial, then if $M_1 \circ f \circ M_1^{-1}, M_2 \circ g \circ M_2^{-1}$ are both defined over $k$ where $M_1,M_2$ are mobius transformations, by Proposition \ref{GeometricBogomolov3} it follows that $(M_1,M_2)(C)$ is also defined over $k$. Conjugating by $M_1,M_2$, we are reduced to the case where $\trdeg(K) = m$, which we may now conclude by induction as desired.
\end{proof}

\section{Proof of Theorem \ref{IntroTheorem1}}
\label{sec:pf-of-intersection-thm}

In this section, we prove Theorem~\ref{IntroTheorem1}. Recall that the theorem asserts that, if the pair $(C,C')$ satisfies its hypotheses, then

$$
\bigcup_{i=1}^{\infty}\left(\varphi^{n_i}(C)\cap C'\right)
$$
is Zariski dense in $C'$, where $\varphi$ is a product polynomial endomorphism of $(\bP^1)^2$.

Our proof follows the general strategy employed by Silverman in his proof of Theorem \ref{IntroSilverman1}. We first use Theorem \ref{IntroDegreeTheorem1} to show that the degree of $\varphi^n(C)$ grows on the order of $\lambda_1(\varphi)^n$. This plays the role of the exponential growth of the canonical height $\widehat{h}_{\varphi}$ under iteration in Silverman’s argument. Next, Theorem \ref{BackwardOrbit1} implies that the only points of $X$ that can occur infinitely often in the intersections $\varphi^n(C)\cap C'$ are periodic points of $\varphi$. A crucial step is therefore to use Theorem \ref{Multiplicity4} to bound the intersection multiplicities of $\varphi^n(C)$ and $C'$ at such points. The following lemma establishes the required bound when $p$ is a non-exceptional fixed point of $\varphi$.

\begin{lemma} \label{OrbitLemma2}
Assume that $\vphi^n: C \to \vphi^n(C)$ is injective for all $n \geq 1$, $p$ is a fixed point of $\vphi$, and $C'$ is an ample irreducible curve. If either $d \not = e$ or $p$ is non-exceptional for $\vphi$, we have
$$\lim_{n \to \infty} \frac{\mult_p(\vphi^n(C) \cap C')}{d^n} = 0.$$
\end{lemma}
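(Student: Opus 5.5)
We reduce to Theorem \ref{Multiplicity4} branch by branch. Write $p=(\alpha,\beta)$ and let $d',e'$ be the local degrees of $f$ at $\alpha$ and $g$ at $\beta$; we have $d'\le d$ and $e'\le e$. Since $C'$ is ample, it is neither $\{\alpha\}\times\bP^1$ nor $\bP^1\times\{\beta\}$. We only need branches of $C$ that pass through $p$; if no branch of $\vphi^n(C)$ passes through $p$, the multiplicity is $0$. The branches of $\vphi^n(C)$ at $p$ are images of branches of $C$ at points of $\vphi^{-n}(p)\cap C$. To keep the count finite, we first treat branches of $C$ at $p$ itself, and then argue that other preimage points contribute only through finitely many branch orbits. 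Because $\vphi^n|_C$ is injective, each such branch has a good parametrization $(f^n(a),g^n(b))$. By Theorem \ref{Multiplicity4}, with $F$ the local equation of $C'$ at $p$, we get
\[
\mult_p(\vphi^n(C_i)\cap C') = \min_{(i,j)\in S}\{(d')^n k_1 i + (e')^n k_2 j\}+O(1).
\]
This uses that $C$ is not preperiodic, which holds because injectivity plus preperiodicity would force bounded bidegree, contradicting Lemma \ref{Intersection1}.

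The key point is that, since $C'$ is not a vertical or horizontal line through $p$, its local equation $F$ is not divisible by a power of $x$ alone or of $y$ alone. Hence $S$ contains a pair $(i_0,0)$ with $i_0\ge1$ or a pair $(0,j_0)$ with $j_0\ge1$; in fact, $F(x,0)\not\equiv0$ gives $(i_0,0)\in S$. The minimum is therefore at most $(d')^n k_1 i_0$, and similarly at most $(e')^n k_2 j_0$. This gives $\mult_p\le c\min\{d',e'\}^n+O(1)$.

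It then suffices to show $\min\{d',e'\}<d$ under the hypothesis. If $d\ne e$, say $e<d$, then $e'\le e<d$. If $d<e$, then $d'\le d$, but we would need strictness. So we instead use the minimum directly: $\min\{d',e'\}\le\min\{d,e\}$, and we need $\min\{d,e\}<d$ or $d'<d$. In the remaining case $d'=d$, the point $\alpha$ is totally ramified for $f$. If also $e'=e\ge d$, then $p$ is totally invariant, i.e.\ exceptional. So the hypothesis that either $d\ne e$ or $p$ is non-exceptional should leave $\min\{d',e'\}<d$. I expect the main obstacle to be the subcase $d<e$ with $d'=d$ and $e'$ large, where the hypothesis gives nothing directly. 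To handle it, I would try to reinterpret the normalization "$d^n$" as $\max$ or $\lambda_1$ in the intended statement, or else exploit $(0,j_0)\in S$ with $(e')^n$ only when $e'<d$.

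Finally, summing over the finitely many branches and dividing by $d^n$ gives the limit $0$. The remaining technical point is to handle the case $d=e=1$ locally via Arnold's theorem (Case 2 of Theorem \ref{Multiplicity4}), where the multiplicity is $O(1)$.
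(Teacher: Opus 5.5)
Your argument is the paper's. Theorem \ref{Multiplicity4}, together with the fact that the local equation of an ample irreducible $C'$ at $p$ contains pure monomials $x^{i_0}$ and $y^{j_0}$, gives $\operatorname{mult}_p(\varphi^n(C)\cap C')=O(\min\{d',e'\}^n)$. Everything then reduces to comparing $\min\{d',e'\}$ with $d$.

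The subcase you could not close ($d<e$, $d'=d$, $e'\ge d$) is not a gap in your reasoning. It is a genuine counterexample to the statement as literally written. Take $f(z)=z^2+c$ with $c\notin\{0,-2\}$, $g$ any cubic, $p=(\infty,\infty)$, and $C=C'$ the closure of the diagonal.
\begin{itemize}
\item By Lemma \ref{Intersection1}, the degree of $\varphi^n|_C$ divides $\gcd(2^n,3^n)=1$.
\item $C$ is not $(f^a,g^b)$-preperiodic, because $2^a\neq 3^b$ (same bidegree count).
\item $\varphi^{-n}(p)=\{p\}$, and $C$ is smooth at $p$.
\end{itemize}
Let $\tilde f(X)=1/f(1/X)$ and $\tilde g(Y)=1/g(1/Y)$, which have orders $2$ and $3$ at $0$. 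Then
\[
\operatorname{mult}_p\bigl(\varphi^n(C)\cap C'\bigr)=\operatorname{ord}_t\bigl(\tilde g^{n}(t)-\tilde f^{n}(t)\bigr)=2^n=d^n ,
\]
so the ratio is identically $1$.

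The paper's one-line proof (``we must have $d_i=e_i=d$'') silently uses $e\le d$. Indeed, the lemma is only invoked with $d\ge e$ in the proof of Theorem \ref{OrbitTheorem1}. Under that hypothesis, or equivalently normalizing by $\max\{d,e\}^n=\lambda_1(\varphi)^n$ as you suggest, your argument is complete. Since $d'\le d$ and $e'\le e\le d$, the equality $\min\{d',e'\}=d$ forces $d'=e'=d=e$, i.e.\ $p$ is totally invariant. Your fallback via $(0,j_0)\in S$ cannot help: in the bad subcase it only yields $(e')^n$ with $e'\ge d$.

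Separately, you are right that points $q\neq p$ of $C$ with $\varphi^m(q)=p$ contribute further branches of $\varphi^n(C)$ at $p$. Here you are more careful than the paper, whose Section \ref{sec:mult-growth} takes these branches to be just the images of the branches of $C$ at $p$. However, your claim of ``finitely many branch orbits'' is asserted, not proved. What is needed is finiteness of $C\cap\bigcup_{m\ge 0}\varphi^{-m}(p)$. These points are $\varphi$-preperiodic. So infinitely many of them would make $C$ preperiodic under some $(f^a,g^b)$ by \cite[Theorem 1.1]{GNY19}, as in Section \ref{sec:BackwardOrbit}, and this is excluded where the lemma is applied. Each such $q$ then contributes branches $\varphi^m(C_q)$ at $p$ with fixed exponents $k_1,k_2$. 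Theorem \ref{Multiplicity4} applies to these, so the total remains $O(\min\{d',e'\}^n)$.
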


\begin{proof}
Let $p = (\alpha,\beta)$. By Theorem \ref{Multiplicity4}, if $d_i,e_i$ are the multiplicity of $\alpha,\beta$ as a fixed point of $f$ and $g$ respectively, then we have 
$$\mult_p(\vphi^n(C) \cap C') \leq O(\min\{d_i,e_i\}^n).$$
Thus for the limit to not go to zero, we must have $d_i = e_i = d$, which implies that $d = e$ and that $p$ is exceptional for $\vphi$. 
\end{proof}

We now analyze the multiplicity growth of exceptional points. We first handle the case where both $f$ and $g$ are not exceptional. Then the only exceptional point is $(\infty,\infty)$. 

\begin{lemma} \label{OrbitLemma3}
Let $\varphi = (f,g)$, $f$ and $g$ be two non-exceptional polynomials of degree $d \ge 2$, and $C$ be an ample irreducible curve in $\bP^1 \times \bP^1$. Assume that $\vphi^n: C \to \vphi^n(C)$ is injective for all $n \geq 1$. Then for all $n$ and all irreducible curves $C' \subset \bP^1 \times \bP^1$ with proper intersection with the iterates of $C$, we have
$$\vphi^n(C) \cdot C' - i_{(\infty,\infty)}(\vphi^n(C),C') \geq d^n + O(1).$$
\end{lemma}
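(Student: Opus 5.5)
Write $(a_n,b_n)$ for the bidegree of $\vphi^n(C)$ and $(a',b')$ for that of $C'$. By Lemma \ref{Intersection1} and injectivity of $\vphi^n|_C$ we have $(a_n,b_n)=(d^n a_0, d^n b_0)$. Hence
\[
\vphi^n(C)\cdot C' = d^n(a_0 b' + b_0 a').
\]
The plan is to bound the local contribution $i_{(\infty,\infty)}(\vphi^n(C),C')$ from above by roughly $d^n(a_0b'+b_0a') - d^n$. The idea is to compare with the intersection of $\vphi^n(C)$ with the boundary divisor $D=(\{\infty\}\times\bP^1)\cup(\bP^1\times\{\infty\})$. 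If $(\infty,\infty)\notin C$, then $(\infty,\infty)\notin\vphi^n(C)$, since $\infty$ is totally invariant for $f$ and $g$. The local multiplicity is then $0$ and the claim is immediate, because $d^n(a_0b'+b_0a')\ge d^n$. So assume $(\infty,\infty)\in C$.

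Near $(\infty,\infty)$, use Böttcher coordinates $\Phi=(\Phi_f,\Phi_g)$ for both $f$ and $g$ at $\infty$. Both local degrees equal $d$, so $\vphi$ is conjugate to $(z^d,z^d)$ near $(0,0)$, and the conjugacy converges. Take the branches $C_1,\dots,C_r$ of $C$ at $(\infty,\infty)$, with good parametrizations $(a_k(z),b_k(z))$ of orders $(k_{1},k_{2})$ in Böttcher coordinates. By Lemma \ref{Multiplicity6} and Theorem \ref{Multiplicity4} (Case 3, $d=e\ge 2$), the local intersection satisfies
\[
i_{(\infty,\infty)}(\vphi^n(C_k),C') = d^n\min_{(i,j)\in S}\{k_1 i+k_2 j\}+O(1),
\]
where $S$ is the support of the local equation of $C'$ at $(\infty,\infty)$. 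The $O(1)$ is bounded provided no branch $\vphi^n(C_k)$ is preperiodic. If a branch were preperiodic, we would get an analytic curve of the form $x^i=\zeta y^j$ in Böttcher coordinates contained in $C$. Since $f$ is non-exceptional, Theorem \ref{BottcherTheorem1} would then make $C$ preperiodic. But injectivity of all $\vphi^n|_C$ together with $C$ ample and preperiodic would force bounded bidegree, contradicting $(a_n,b_n)=(d^na_0,d^nb_0)$. So the $O(1)$ is genuinely bounded.

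It remains to compare $\min_{(i,j)\in S}\{k_1i+k_2j\}$, summed over branches, with the global number $a_0b'+b_0a'$. Taking the point $(i',j')\in S$ realizing $\operatorname{mult}$ data gives $\min\{k_1i+k_2j\}\le k_1 i_0 + k_2 j_0$ for any $(i_0,j_0)\in S$. I would bound this using the local intersections of $C$ with the two boundary lines. We have $\sum_k k_1 = i_{(\infty,\infty)}(C,\{\infty\}\times\bP^1)\le b_0$ and $\sum_k k_2\le a_0$. Similarly, the exponents $i_0,j_0$ of a suitable monomial of $C'$ are bounded by its local intersections with the axes, hence by $b'$ and $a'$. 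The key point is that strict inequality must hold somewhere, so that we save at least one full $d^n$. Here $C'$ not being a boundary line, and $C$ ample, are what produce the saving. The cross terms $k_1i_0+k_2j_0$ are at most $b_0a'+a_0b'$, minus a positive integer defect coming from one of the following:
\begin{itemize}
\item intersection points of $C$ with $D$ away from $(\infty,\infty)$;
\item intersection points of $C'$ with $D$ away from $(\infty,\infty)$;
\item the fact that $\min$ over $S$ is strictly less than the mixed sum, because the Newton polygon of $C'$ has at least two vertices or the inequality $\min\le k_1i_0+k_2j_0$ is strict.
\end{itemize}

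I expect this last combinatorial step to be the main obstacle. It requires showing the defect is at least $1$ in every configuration, for instance when $C$ and $C'$ are both totally tangent to $D$ only at $(\infty,\infty)$. I would first try to handle this via the Newton polygon inequality $\min_S(k_1i+k_2j)\le \min(k_1 i_{\max}, k_2 j_{\max})$ combined with $k_1 i_{\max}+k_2 j_{\max}\le a_0b'+b_0a'$. This yields a saving of $\max(k_1i_{\max},k_2j_{\max})\ge 1$, hence a saving of at least $d^n$ after scaling.
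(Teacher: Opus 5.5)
Your proposal is correct and is essentially the paper's argument: apply Theorem~\ref{Multiplicity4} branchwise at $(\infty,\infty)$, use $\sum k_1\le b_0$ and $\sum k_2\le a_0$, and evaluate the minimum at a specific monomial of the local equation of $C'$ to save at least one unit. The paper takes any non-constant affine monomial $x^{i_1}y^{j_1}$ of $C'$, saving $i_1b_0+j_1a_0\ge 1$, while your axis-intercept choice saves $\max(k_1i_{\max},k_2j_{\max})\ge 1$ per branch and works in every configuration, so the case analysis of possible defects is unnecessary; so is the detour through Theorem~\ref{BottcherTheorem1} for the $O(1)$ term (which as written only yields $(f^a,g^b)$-preperiodicity), since a preperiodic branch directly forces $\varphi^{m}(C)=\varphi^{m'}(C)$ for some $m\neq m'$, contradicting the bidegree growth $(d^na_0,d^nb_0)$.
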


\begin{proof}
We may assume without loss of generality that both $C$ and $C'$ pass through $(\infty, \infty)$ because otherwise
$$
i_{(\infty,\infty)}(\vphi^n(C),C') = 0$$
for all $n \ge 0$ and the desired inequality is immediate. 

Let $C'$ be given by the polynomial $F = \sum_{(i,j) \in S} c_{i,j} x^i y^j$ and let it have bidegree $(v,u)$. Changing charts by setting $X = \frac{1}{x}$ and $Y = \frac{1}{y}$, we get the curve $\tilde{C'} = \sum_{(i,j) \in S} c_{i,j} X^{v-i} Y^{u-j}$ and we are interested in the intersection multiplicity at $(0,0)$. Let $\tilde{C_n}$ be our curve $\vphi^n(C)$ in this new chart and let the branches of $\tilde{C_0}$ at $(0,0)$ be given by $(t^{k_{w}} x_{w}(t), t^{s_{w}} y_{w}(t))$ where $x_w(0)y_w(0) \ne 0$ for $1 \leq w \leq l$. Then Theorem \ref{Multiplicity4} tells us that 
\begin{equation} \label{eq:Mult1}
i_{(\infty,\infty)}(\vphi^n(C),C') = i_{(0,0)}(\tilde{C}_n, \tilde{C'}) \leq \sum_{w=1}^{l} d^n \min_{(i,j) \in S}\{ (v-i)k_{w} + (u-j)s_{w}\} + O(1).
\end{equation}
Let $\tilde{C}_n$ have bidegree $(a_n,b_n)$. Observe that 
$$\sum_{w=1}^{l} k_w = i_{(\infty,\infty)}(C \cap (\{\infty\} \times \bb{P}^1)) \leq C \cdot (\{\infty\} \times \bb{P}^1) =  b_0$$
and similarly
$$\sum_{w=1}^{l} s_w = i_{(\infty,\infty)}(C \cap (\bb{P}^1 \times \{\infty\})) \leq C \cdot (\bb{P}^1 \times \{\infty\}) = a_0.$$
Hence if $(i_1,j_1) \in S$ such that $x^{i_1} y^{j_1}$ has non-zero coefficient, we get
$$\sum_{w=1}^{l} \min_{(i,j) \in S} \{ (v-i)k_{w} + (u-j)s_{w}\} \leq \sum_{w=1}^{l} ((v-i_1) k_{w} + (u-j_1)s_{w}) $$
$$\leq v \sum_{w=1}^{l} k_{w} + u \sum_{w=1}^{l} s_{w} \le v b_0 + u a_0 = C \cdot C'.$$
Equality only holds if $i_1 = j_1 = 0$ and so if equality holds for all pairs $(i,j) \in S$, this means that $S = \{(0,0)\}$ which is not possible. Thus 
\begin{equation}
\label{eqn:mult-bound}
\sum_{w=1}^{l} \min_{(i,j) \in S} \{(v-i) k_{w} + (u-j)s_{w}\} \le vb_0 + ua_0 - 1.
\end{equation}
Since $\vphi^n: C \to \vphi^n(C)$ is injective and $d = e$, using the proof of Theorem \ref{IntroDegreeTheorem1}, the bidegree of $\varphi^n(C)$ must be equal to $(d^na_0, d^nb_0)$. Hence,
\begin{equation}
\label{eqn:int-num-with-iterate}
\varphi^n(C)\cdot C' = d^nvb_0 + d^nua_0.  
\end{equation}
Using \eqref{eqn:mult-bound} and \eqref{eqn:int-num-with-iterate} we conclude that
$$d^n \left( \sum_{w=1}^{l} \min_{(i,j) \in S} \{(v-i) k_{w} + (u-j)s_{w} + 1\} \right)\leq \vphi^n(C) \cdot C' - d^n. $$
Our desired inequality now follows from \eqref{eq:Mult1}.
\end{proof}

If $g$ is exceptional. Then it is possible that $(\infty,0)$ is an exceptional point for $\vphi$. We strengthen Lemma \ref{OrbitLemma3} to handle this too.

\begin{lemma} \label{OrbitLemma4}
Let $\varphi = (f,g)$, where $f$ is a non-exceptional polynomial of degree $d \ge 2$ and $g$ is assumed to be an exceptional polynomial of degree $d$ with $\{0,\infty\}$ as its exceptional points. Let $C$ be an ample irreducible curve in $\bP^1 \times \bP^1$. Assume that $\vphi^n: C \to \vphi^n(C)$ is injective for all $n \geq 1$. If $C'$ is non-exceptional for $\vphi$ and intersects the iterates of $C$ properly, for all $n \geq 1$ we have
$$\vphi^n(C) \cdot C' - i_{(\infty,0)}(\vphi^n(C),C') - i_{(\infty,\infty)}(\vphi^n(C),C') \geq d^n + O(1).$$
\end{lemma}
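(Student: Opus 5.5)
We will follow the proof of Lemma~\ref{OrbitLemma3}, now treating the two fixed points $(\infty,\infty)$ and $(\infty,0)$ together. By Lemma~\ref{Intersection1} and injectivity, $\vphi^n(C)$ has bidegree $(d^na_0,d^nb_0)$. Hence if $C'$ is given by $F=\sum_{(i,j)\in S}c_{i,j}x^iy^j$ of bidegree $(v,u)$, we get $\vphi^n(C)\cdot C' = d^n(vb_0+ua_0)$. The goal is then to show
\[
i_{(\infty,0)}(\vphi^n(C),C') + i_{(\infty,\infty)}(\vphi^n(C),C') \le d^n\,(vb_0+ua_0-1)+O(1).
\]

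\textbf{Step 1: local bounds.} At $(\infty,\infty)$ we use the chart $X=1/x$, $Y=1/y$, as in Lemma~\ref{OrbitLemma3}. Let the branches of $C$ there be $(t^{k_w}x_w(t),t^{s_w}y_w(t))$, $1\le w\le l$. Theorem~\ref{Multiplicity4} gives the contribution
\[
\sum_w d^n\min_{(i,j)\in S}\{(v-i)k_w+(u-j)s_w\}+O(1).
\]
At $(\infty,0)$ we use the chart $X=1/x$, $y$. There $C'$ is given by $\sum c_{i,j}X^{v-i}y^j$. Writing the branches of $C$ as $(t^{k'_w}\cdot,\,t^{s'_w}\cdot)$, $1\le w\le l'$, Theorem~\ref{Multiplicity4} gives
\[
\sum_w d^n\min_{(i,j)\in S}\{(v-i)k'_w+j s'_w\}+O(1).
\]
Both points are fixed with local degree $d$ in each coordinate, since $\infty$ and $0$ are totally invariant for $g$ (assuming after conjugation that $g$ is a power map or has $0$ as a superattracting fixed point). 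So Theorem~\ref{Multiplicity4} applies; by non-preperiodicity Case~3 gives the $O(1)$ error.

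\textbf{Step 2: summation.} We have $\sum_w k_w+\sum_w k'_w\le C\cdot(\{\infty\}\times\bP^1)=b_0$. Also $\sum_w s_w\le a_0$ and $\sum_w s'_w\le a_0$, bounded by intersection with $\bP^1\times\{\infty\}$ and $\bP^1\times\{0\}$ respectively. Choose a single pair $(i_1,j_1)\in S$ and a pair $(i_2,j_2)\in S$. Bound the first sum by $(v-i_1)\sum k_w+(u-j_1)\sum s_w$ and the second by $(v-i_2)\sum k'_w + j_2\sum s'_w$. We want the total to be at most $vb_0+ua_0-1$. The only way to reach $vb_0+ua_0$ is to take $i_1=i_2=0$ with $j_1=0$ and $j_2=u$, with equalities forced at the boundary. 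This is where the care is needed: the $s$-terms use $a_0$ twice, once for $y=\infty$ and once for $y=0$, so a naive bound gives $2ua_0$ rather than $ua_0$.

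\textbf{Main obstacle.} To fix this double count, we pick $(i_1,j_1)$ to maximize $j$ and $(i_2,j_2)$ to minimize $j$. It is cleaner to use the same pair $(i,j)$ in both sums. Then the $s$-contributions total $(u-j)\sum s_w + j\sum s'_w\le ua_0$, and the $k$-contributions total $(v-i)(\sum k_w+\sum k'_w)\le vb_0$. Equality would require $i=0$ for every $(i,j)\in S$, or all the inequalities $\sum k_w+\sum k'_w = b_0$ being tight. If $i=0$ for every monomial, then $F$ is a polynomial in $y$ alone and $C'$ is a horizontal line $\bP^1\times\{\beta\}$. In that case the count changes: we must then use that $C'$ is non-exceptional, i.e.\ $\beta\notin\{0,\infty\}$, so $C'$ misses both points and the bound is trivial. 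Otherwise some $(i,j)\in S$ with $i\ge1$ gives a strict loss of at least $\sum k_w+\sum k'_w\ge1$, provided $C$ passes through one of the two points; if it passes through neither, the inequality is immediate. This yields the total $\le vb_0+ua_0-1$, and multiplying by $d^n$ and subtracting from $\vphi^n(C)\cdot C'$ gives the claim. The delicate point is verifying the equality cases carefully, in particular vertical $C'=\{\alpha\}\times\bP^1$. There the $k$-terms vanish only if $\alpha=\infty$, which is excluded by non-exceptionality; otherwise $C'$ avoids both points.
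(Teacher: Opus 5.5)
Your proposal is correct and follows essentially the same argument as the paper. You bound each local multiplicity via Theorem \ref{Multiplicity4}, evaluate every branch minimum at one common pair $(i,j)\in S$ so that the $s$-terms combine as $(u-j)\sum s_w + j\sum s'_w \le ua_0$ and the $k$-terms give $(v-i)(\sum k_w+\sum k'_w)\le vb_0$, and get the strict loss of at least $1$ from a monomial with $i\ge 1$. The one minor difference is the case where every monomial has $i=0$: you use non-exceptionality of $C'$ to force $C'=\bP^1\times\{\beta\}$ with $\beta\notin\{0,\infty\}$, which is a slightly cleaner version of the paper's remark that a horizontal $C'$ cannot pass through both $(\infty,\infty)$ and $(\infty,0)$.
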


\begin{proof}
If either $C$ or $C'$ only contains one of $(\infty,\infty)$ and $(\infty,0)$, then we may change charts and apply the same argument as Lemma \ref{OrbitLemma3}. Hence we assume that $C$ contains both $(\infty,\infty)$ and $(\infty,0)$. We again let the branches at $(\infty,\infty)$ of $C$, after changing charts to $(0,0)$, be given by $(t^{k_{w,1}} x_{w,1}(t), t^{s_{w,1}} y_{w,1}(t))$ for $1 \leq w \leq l_1$ and the branches at $(\infty,0)$ be given by $(t^{k_{w,2}} x_{w,2}(t), t^{s_{w,2}} y_{w,2}(t))$ for $1 \leq w \leq l_2$. 
\par 
Then we have
$$i_{(\infty,\infty)}(\vphi^n(C),C') \leq \sum_{w=1}^{l_1} d^n \min_{(i,j) \in S} \{ (v-i) k_{w,1} + (u-j) s_{w,1}\} + O(1),$$
$$i_{(\infty,0)} (\vphi^n(C),C') \leq \sum_{w=1}^{l_2} d^n \min_{(i,j) \in S} \{ (v-i) k_{w,2} + j s_{w,2}\} + O(1).$$
We now observe that 
$$\sum_{w=1}^{l_1} k_{w,1} + \sum_{w=1}^{l_2} k_{w,2} = i_{(\infty,\infty)}(C, \{\infty\} \times \bb{P}^1) + i_{(\infty,0)}(C, \{\infty\} \times \bb{P}^1) \leq C \cdot (\{\infty\} \times \bb{P}^1) = b_0$$
and similarly one obtains
$$\sum_{w=1}^{l_1} s_{w,1} \leq a_0, \quad \sum_{w=1}^{l_2} s_{w,2} \leq a_0.$$
Thus if $(i_1,j_1) \in S$, we have
$$\sum_{w=1}^{l_1} \min_{(i,j) \in S} \{ (v-i) k_{w,1} + (u-j) s_{w,1}\} + \sum_{w=1}^{l_2} \min_{(i,j) \in S} \{ (v-i) k_{w,2} + j s_{w,2}\} $$
$$\leq \sum_{w=1}^{l_1}\left( (v-i_1) k_{w,1} + (u-j_1) s_{w,1} \right) + \sum_{w=1}^{l_2} \left( (v-i_1)k_{w,2} + j_1 s_{w,2} \right)$$
$$\leq (v-i_1) b_0 + u a_0 \leq C \cdot C'.$$
Now note that if we can find a pair $(i_1,j_1) \in S$ with $i_1 \ne 0$, the above inequality becomes a strict inequality and the conclusion of the lemma follows in a similar manner to Lemma \ref{OrbitLemma3}. So, assume no such pair exists for the sake of a contradiction. This implies that the polynomial defining $C'$ has no dependence on $x$ which means that $C'$ must be a horizontal curve. This contradicts the assumption that $C'$ passes through both $(\infty, \infty)$ and $(\infty, 0)$ and finishes the proof.  
\end{proof}

We are now ready to complete the proof of Theorem \ref{IntroTheorem1}. We recall the statement here for convenience. 

\begin{theorem} \label{OrbitTheorem1}
Let $\vphi = (f,g):\bb{P}^1 \times \bP^1 \to \bb{P}^1 \times \bP^1$ be a product polynomial endomorphism defined over $\bb{C}$. Assume that $f$ and $g$ are not both exceptional. Let $C,C'$ be irreducible curves in $\bb{P}^1 \times \bP^1$ such that $C$ is ample and is not preperiodic under $(f^a, g^b)$ for any $a,b \ge 1$ and $C'$ is not exceptional. Then for any infinite subset $\{n_i\}_{i \ge 1} \subset \N$, the union
\[
\bigcup_{i = 1}^\infty \left(\varphi^{n_i}
(C) \cap C'\right)\]
is Zariski dense in $C'$. 
\end{theorem}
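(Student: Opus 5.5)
We argue by contradiction, following Silverman's strategy. Suppose the union is not Zariski dense in $C'$. Since $C'$ is an irreducible curve, the union is then a finite set $Z=\{q_1,\dots,q_s\}\subset C'$, so $\varphi^{n_i}(C)\cap C'\subseteq Z$ for every $i$.

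\textbf{Preliminary reductions.} First we dispose of trivial cases. If $C'$ were a component of $\varphi^{n_i}(C)$ for infinitely many $i$, then $C'=\varphi^{n_i}(C)=\varphi^{n_j}(C)$ for some $i<j$. This makes $C$ preperiodic under $\varphi=(f,g)$, contradicting the hypothesis with $a=b=1$. So, after discarding finitely many indices, the intersections are proper and
\[
\varphi^{n_i}(C)\cdot C'=\sum_{q\in Z} i_q(\varphi^{n_i}(C),C').
\]
Next, by Theorem \ref{DegreeTheorem1} (condition (1) holds after possibly swapping coordinates), the degree $d_n$ of $\varphi^n|_C$ is eventually constant. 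Replacing $C$ by $\varphi^{N}(C)$ and the sequence by a shift, we may assume $\varphi^n:C\to\varphi^n(C)$ is birational for all $n$. We will also arrange local injectivity at the relevant points, replacing $C$ by a further iterate if needed.

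Then the bidegree of $\varphi^n(C)$ is $(e^na_0,d^nb_0)$. If $C'$ has bidegree $(v,u)$, this gives
\[
\varphi^{n_i}(C)\cdot C'=ve^{n_i}a_0+ud^{n_i}b_0,
\]
which grows like $\max(d,e)^{n_i}$ because $C$ is ample.

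\textbf{Which points of $Z$ can contribute.} Each $q\in Z$ lies on infinitely many $\varphi^{n_i}(C)$. By Theorem \ref{BackwardOrbit1}, every non-periodic $q$ therefore lies on only finitely many of them, so after discarding finitely many indices every $q\in Z$ is $\varphi$-periodic. Passing to a subsequence and an iterate $\varphi^M$, we may assume every $q$ is fixed and all $n_i$ lie in one residue class mod $M$. We then apply Theorem \ref{Multiplicity4} at each $q$ and each branch of $C$ through the preimages of $q$.

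\textbf{Case $d\ne e$, say $d>e$.} Here $\varphi^{n_i}(C)\cdot C'\asymp d^{n_i}$ if $u\ge1$. Lemma \ref{OrbitLemma2}, suitably adapted to local degrees, gives $i_q=o(d^{n})$ at every fixed point whose $f$-local degree is less than $d$. The only points where the $f$-local degree equals $d$ are $q=(\infty,\beta)$. At such a point, Theorem \ref{Multiplicity4}, Case 1, gives multiplicity $d^nk_1i'+O(e^n)$, where $i'$ is the minimal $x$-exponent in the local equation. A fibre-counting argument like Lemma \ref{OrbitLemma3} bounds the sum over these points by $d^n(ub_0-\delta)$ with $\delta\ge1$. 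This uses that $C'$ is not the totally invariant fibre $\{\infty\}\times\bP^1$; note $u=0$ would mean $C'$ is vertical, and then the count is handled symmetrically with $e^n$. This yields the contradiction $\varphi^{n}(C)\cdot C'>\sum_q i_q$.

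\textbf{Case $d=e$.} Lemma \ref{OrbitLemma2} shows non-exceptional fixed points contribute $o(d^n)$. The exceptional points are handled by Lemma \ref{OrbitLemma3} when neither map is exceptional, and by Lemma \ref{OrbitLemma4} when one is; either way,
\[
\varphi^n(C)\cdot C'-\sum_{q\ \mathrm{exceptional}} i_q\ge d^n+O(1).
\]
Summing over $Z$ gives $\varphi^{n_i}(C)\cdot C'\le \varphi^{n_i}(C)\cdot C'-d^{n_i}+o(d^{n_i})$, a contradiction for large $i$.

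\textbf{Main obstacle.} The delicate step is the case $d\ne e$ at fixed points on the totally invariant fibres. There, the leading term of Theorem \ref{Multiplicity4} has the same exponential rate as the total intersection number, so a genuine deficit must be extracted from non-exceptionality of $C'$, as in Lemmas \ref{OrbitLemma3}–\ref{OrbitLemma4}. A second concern is ensuring the injectivity hypotheses on branches. I would handle this by noting that local degrees stabilize, which follows from Lemma \ref{Degree1}'s argument combined with Theorem \ref{DegreeTheorem1}.
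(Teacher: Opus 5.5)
Your strategy is the paper's: contradiction with a finite set, birationality of $\varphi^n|_C$ via Theorem~\ref{DegreeTheorem1}, Theorem~\ref{BackwardOrbit1} to discard non-periodic points, passage to fixed points, Lemma~\ref{OrbitLemma2} at non-exceptional fixed points, and Lemmas~\ref{OrbitLemma3}--\ref{OrbitLemma4} at exceptional ones when $d=e$.

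\textbf{Error in the case $d\neq e$.} Your intersection formula is mis-paired. If $\varphi^n(C)$ has bidegree $(e^na_0,d^nb_0)$ and $C'$ has bidegree $(v,u)$, then $\varphi^n(C)\cdot C'=u\,e^na_0+v\,d^nb_0$. So for $d>e$, the curves whose intersection number grows only like $e^n$ are the \emph{horizontal} ones $C'=\mathbb{P}^1\times\{\beta\}$ (where $v=0$). A vertical $C'$ has $\varphi^n(C)\cdot C'=d^nb_0$.

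As written, you claim $\varphi^n(C)\cdot C'\asymp d^n$ for horizontal $C'$ and then show only that local contributions are $o(d^n)$. Since the true total is $e^na_0$, this gives no contradiction. The repair is the paper's:
\begin{itemize}
\item If $\beta$ is not $g$-periodic, apply Theorem~\ref{BackwardOrbit1}.
\item Otherwise, at a fixed point $(\alpha,\beta)$ the multiplicity is $c(e')^n$, where $e'$ is the local degree of $g$ at $\beta$. Non-exceptionality of $C'$ forces $e'<e$, so these contributions are $o(e^n)$.
\item A non-exceptional vertical $C'$ belongs to the $d^n$ regime, with contributions $c(d')^n$ and $d'<d$.
\end{itemize}

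\textbf{The ``main obstacle'' is not there.} When $d\neq e$, Lemma~\ref{OrbitLemma2} already gives $o(d^n)$ at \emph{every} fixed point, including $(\infty,\beta)$; its hypothesis is ``$d\neq e$ or $p$ non-exceptional.'' Concretely, in Case~1 of Theorem~\ref{Multiplicity4}, the minimizing monomial has $i'$ equal to the least exponent of the local coordinate $X=1/x$ in the local equation of $C'$. Because $C'$ is irreducible and differs from $\{\infty\}\times\mathbb{P}^1$, that equation is not divisible by $X$. Hence $i'=0$, and the multiplicity is $O((e')^n)=O(e^n)=o(d^n)$. No fibre-counting deficit is needed.

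The same remark covers the points $(0,\beta)$ when $f$ is a power map; your list of points with $f$-local degree $d$ omits these. Your injectivity concern is also settled by birationality alone: birationality of $\varphi^n|_C$ forces each branch to map one-to-one onto its image germ, so $(f^n(a),g^n(b))$ is a good parametrization.

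With these corrections your argument coincides with the paper's.
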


\begin{proof}
Suppose for the sake of contradiction that there is an infinite sequence $\{n_i\}$ and a finite set $X \subset \bP^1 \times \bP^1$ such that
\[
\bigcup_{i = 1}^\infty \left(\varphi^{n_i}
(C) \cap C'\right) \subset X.
\]
Since $C$ is not preperiodic under $(f^a,g^b)$ for any $a,b \ge 1$, Theorem \ref{DegreeTheorem1} implies that after replacing $C$ by an iterate, we may assume that $\vphi^n: C \to \vphi^n(C)$ is injective for all $n \geq 1$.
\par 
First suppose that $d > e$. If $C'$ is not of the form $\bP^1 \times \{\alpha\}$, then $\varphi^n(C) \cdot C'$ must grow with the exponential rate $d$ by Theorem \ref{IntroDegreeTheorem1}. On the other hand, using Lemma \ref{OrbitLemma2} and Theorem \ref{BackwardOrbit1} we have
\[
\lim_{n \to \infty} \frac{\mult_p(\vphi^n(C) \cap C')}{d^n} = 0
\]
for any given point $p$. This clearly shows that the union of the sets $\varphi^{n_i}(C) \cap C'$ cannot be contained in any given finite set. 

Now suppose $C' = \bP^1 \times \{\alpha\}$. In this case, $\varphi^n(C) \cdot C'$ grows at the exponential rate $e$ by Theorem \ref{IntroDegreeTheorem1}. If $\alpha$ is not periodic under $g$, we can use Theorem \ref{BackwardOrbit1} to conclude the proof. If $\alpha$ is periodic, we may replace $\varphi$ with an iterate to assume that $\alpha$ is fixed. Since $C'$ is non-exceptional, the multiplicity of $g$ at the fixed point $\alpha$ is strictly less than $e$. Hence, for any fixed point of $\varphi$ in $\bP^1 \times \{\alpha\}$ we must have
$$
\lim_{n \to \infty} \frac{\mult_p(\vphi^n(C) \cap C')}{e^n} = 0.
$$
which concludes the proof. 

For the rest of the proof we assume that $d = e$. Let $p \in X$ be non-periodic. Since $C$ is not preperiodic under $(f^a,g^b)$ for any $a,b \ge 1$, it follows from Theorem \ref{BackwardOrbit1} that 
$$\lim_{n \to \infty} \frac{\mult_p(\vphi^n(C) \cap C')}{d^n} = 0.$$
For periodic points in $X$, since $X$ is a finite set, we may replace $C$ with an iterate $\varphi^k(C)$ and $\varphi$ with $\varphi^\ell$ for some $k,\ell \ge 1$ and assume all periodic points in $X$ are fixed points. If $C'$ is a non-exceptional horizontal or vertical curve, the proof proceeds in a manner similar to the case $d>e$. We may therefore assume that $C'$ is ample. For non-exceptional points in $X$, Lemma \ref{OrbitLemma2} again implies that
\[
\operatorname{mult}_p\bigl(\varphi^n(C)\cap C'\bigr)=o(d^n).
\]
For exceptional points, Lemmas \ref{OrbitLemma3} and \ref{OrbitLemma4} show that the sum of their intersection multiplicities is at most
\[
\varphi^n(C)\cdot C' - d^n + O(1).
\]
Hence
\begin{align}
\varphi^n(C)\cdot C'
-
\sum_{p\in X}
\operatorname{mult}_p\bigl(\varphi^n(C)\cap C'\bigr)
&=
\varphi^n(C)\cdot C'
-
\sum_{\substack{p\in X \\ p \text{ exceptional}}}
\operatorname{mult}_p\bigl(\varphi^n(C)\cap C'\bigr)
\notag \\
&\quad
-
\sum_{\substack{p\in X \\ p \text{ non-exceptional}}}
\operatorname{mult}_p\bigl(\varphi^n(C)\cap C'\bigr)
\notag \\
&\ge d^n + O(1) - o(d^n).
\end{align}
In particular, for all sufficiently large $n$, the intersection $\varphi^n(C)\cap C'$ cannot be contained in $X$, contradicting our assumption on $X$. This completes the proof of Theorem \ref{OrbitTheorem1}.
\end{proof}

\section{Degenerate cases of Theorem \ref{IntroTheorem1}}
\label{sec:degenerate-cases}

It is natural to ask what can be said about the intersections
\[
\varphi^n(C)\cap C'
\]
when the pair $(C,C')$ is excluded by the hypotheses of Theorem \ref{IntroTheorem1}. The aim of this section is to classify precisely those excluded pairs $(C,C')$ for which the intersections
\[
\varphi^n(C)\cap C'
\]
are contained in a prescribed finite set for infinitely many values of $n$.
\begin{theorem}
\label{thm:excluded-cases}
Suppose that we are in the setting of Theorem \ref{IntroTheorem1} and the tuple $(C, C')$ belongs to one of the following classes:
\begin{itemize}
    \item[(a)] $C$ is arbitrary and $C'$ is exceptional; or

    \item[(b)] $C$ is horizontal or vertical and $C'$ is non-exceptional; or

    \item [(c)] $C$ is ample and preperiodic under $(f^a,g^b)$ for some $a, b \ge 1$ and $C'$ is non-exceptional.
\end{itemize}
Then there exists a finite set $X \subset C'$ such that
\[
\varphi^n(C)\cap C' \subseteq X
\]
for infinitely many $n$ if and only if we are in one of the following situations
\begin{itemize}
    \item[(a')] The pair $(C, C')$ is of type $(a)$ and 
    \[
C\cap C' \subset \operatorname{Prep}(\varphi);
\]

    \item[(b')] The pair $(C, C')$ is of type $(b)$ and either
    \begin{enumerate}
        \item $C$ is preperiodic under $\varphi$ and the iterates $\varphi^n(C)$ do not stabilize to $C'$; or
        \item $C$ is not preperiodic and $C$ and $C'$ are both horizontal or both vertical;
    \end{enumerate}

    \item [(c')] The pair $(C, C')$ is of type $(c)$, $\deg(f) = \deg(g)$, and the iterates $\varphi^n(C)$ do not stabilize to $C'$.

    \item[(c'')] The pair $(C, C')$ is of type (c), $\deg(f) > \deg(g)$, and $C'$ is of the form $\bP^1 \times \{\gamma\}$ for some $\gamma \in \operatorname{Prep}(g)$.
\end{itemize}

\end{theorem}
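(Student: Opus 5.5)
The proof is a case-by-case analysis over the three classes (a), (b), (c). In each case we use the explicit structure of $C$, or of $C'$, to decide whether the intersections $\varphi^n(C)\cap C'$ eventually stay in a finite set. The recurring tools are: total invariance of exceptional curves; the fact that for horizontal or vertical $C$ the image $\varphi^n(C)$ is again horizontal or vertical; the bidegree formula of Lemma \ref{Intersection1}; and Theorem \ref{BackwardOrbit1} applied to one-dimensional projections.

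\textbf{Case (a).} Suppose $C'$ is exceptional, so $\varphi^{-1}(C')=C'$. Then $\varphi(C')=C'$ as well.

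A point $q\in\varphi^n(C)\cap C'$ has the form $q=\varphi^n(x)$ with $x\in C$. Total invariance gives $x\in C'$. Hence $\varphi^n(C)\cap C'=\varphi^n(C\cap C')$.

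If $C\subset C'$, then $C=C'$. In that case the intersection is all of $C'$, which is infinite, so no finite $X$ works; the degenerate condition should exclude this.

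Otherwise $C\cap C'$ is a finite set $\{x_1,\dots,x_k\}$. The set $\bigcup_n\varphi^n(C\cap C')$ is finite exactly when every $x_i$ has finite forward orbit, i.e. $x_i\in\operatorname{Prep}(\varphi)$. Moreover "contained in a finite $X$ for infinitely many $n$" forces a bounded orbit along a subsequence, which again means preperiodic. This proves (a').

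\textbf{Case (b).} Say $C=\mathbb{P}^1\times\{\beta\}$; the vertical case is symmetric. Then $\varphi^n(C)=\mathbb{P}^1\times\{g^n(\beta)\}$.

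If $C'$ is also horizontal, $C'=\mathbb{P}^1\times\{\gamma\}$, the intersection is either empty or all of $C'$. The good case fails only when $g^n(\beta)=\gamma$ for infinitely many $n$. That happens exactly when $\beta$ is preperiodic and the orbit stabilizes onto $\gamma$, i.e. $\varphi^n(C)=C'$ for infinitely many $n$.

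If $C'$ is vertical, every intersection is a single point, so take $X$ to be that finite set of points when $C$ is preperiodic. If $C$ is not preperiodic, the points $(\alpha',g^n(\beta))$ are distinct, so no finite $X$ contains them.

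If $C'$ is ample, the intersection $C'\cap(\mathbb{P}^1\times\{g^n(\beta)\})$ is the fiber of the finite map $\pi_2|_{C'}$ over $g^n(\beta)$. This fiber is nonempty. When $\beta$ is not preperiodic these fibers are pairwise disjoint for distinct $n$, so their union is infinite. When $\beta$ is preperiodic, only finitely many fibers occur, so their union is finite.

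Assembling these subcases gives (b'): (1) covers the preperiodic case, and (2) covers the non-preperiodic case with both curves horizontal or both vertical, where the intersections are empty. I will double check the exact wording against (b') at the end.

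\textbf{Case (c).} Replace $C$ by an iterate and $\varphi$ by an iterate, so that $C$ is invariant under $\psi=(f^a,g^b)$. Write $d=\deg f$ and $e=\deg g$.

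If $d=e$, take $a=b$ after a further iterate. Then $\varphi^n(C)$ runs through finitely many curves. So the intersections lie in a finite set unless one of these curves equals $C'$ infinitely often. This gives (c').

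If $d>e$, the curve $\varphi^n(C)$ is generally not periodic. Its bidegree grows, but only at rate $\deg(\varphi^n|_C)^{-1}(e^n a_0,d^n b_0)$, as in Lemma \ref{Intersection1}. I want to show that the union of the intersections is infinite unless $C'=\mathbb{P}^1\times\{\gamma\}$ with $\gamma$ preperiodic under $g$.

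For such $C'$ the intersection of $\varphi^n(C)$ with $\mathbb{P}^1\times\{g^{-n}\text{-stuff}\}$ is controlled through the invariance of $C$. Concretely, $\varphi^n(C)\cap(\mathbb{P}^1\times\{\gamma\})=\varphi^n\big(C\cap(\mathbb{P}^1\times g^{-n}(\gamma))\big)$. Since $\gamma$ is preperiodic, I expect to reduce this via the $\psi$-invariance of $C$ to finitely many configurations.

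For all other non-exceptional $C'$, I will copy the proof of Theorem \ref{OrbitTheorem1}. The ingredients are the intersection-number growth $\varphi^n(C)\cdot C'$, Theorem \ref{BackwardOrbit1} for non-periodic points, and Theorem \ref{Multiplicity4} for fixed points, which keeps the total multiplicity strictly below the intersection number.

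\textbf{Main obstacle.} The hardest step is case (c) with $d>e$. Here the degree of $\varphi^n(C)$ grows only at the reduced rate, so I must show that the local multiplicity bounds from Theorem \ref{Multiplicity4} still leave an unbounded remainder. I would first try to reduce to $C$ invariant under $(f^a,g^b)$ with $d^a=e^b$, and then apply the Medvedev–Scanlon description (Theorem \ref{ScanlonTheorem1}) to make $C$ a graph $y=L\circ h^l(x)$. After that, the intersections can be computed explicitly via the one-variable problem $f^n(x)\in\ldots$ and Silverman-type finiteness.
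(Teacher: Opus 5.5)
\textbf{Gap: type (c) with $\deg f>\deg g$.} Your treatment of types (a) and (b) is fine; your subcase analysis in (b) is more explicit than the paper's. The case $\deg f=\deg g$ of (c) is essentially right. The genuine gap is type (c) with $\deg f>\deg g$, i.e.\ both directions of (c''). There you have a plan, but the tools you name do not apply.
\begin{itemize}
\item Theorem \ref{BackwardOrbit1} is vacuous here. Its conclusion is exactly that $C$ is $(f^a,g^b)$-preperiodic, which is now your hypothesis, so it gives no control over non-periodic points lying on infinitely many $\varphi^n(C)$.
\item Theorem \ref{Multiplicity4} needs local injectivity of $\varphi^n|_C$. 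You cannot get this from Theorem \ref{DegreeTheorem1}, because $C$ lies in its excluded class. In fact $\deg(\varphi^n|_C)$ is unbounded in this situation: for $\varphi=(g^2,g)$ with $g$ non-exceptional and $C$ the diagonal, it equals $(\deg g)^n$.
\item Theorem \ref{ScanlonTheorem1} only classifies curves invariant under a diagonal map $(f,f)$ with $f$ non-exceptional. Your $C$ is invariant under $(f^a,g^b)$, with $f^a\neq g^b$ in general and one of $f,g$ possibly exceptional. So it does not put $C$ in the form $y=L\circ h^l(x)$.
\end{itemize}
The forward direction of (c'') (``I expect to reduce this\ldots'') has no argument. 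Neither does the requirement $\gamma\in\operatorname{Prep}(g)$ in the converse.

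\textbf{The missing idea: reduce to a map $(F,\operatorname{id})$.} Lemma \ref{Intersection1}, applied to $(f^a,g^b)$ and an ample periodic curve, forces $d^a=e^b$, hence $a<b$. (When $d=e$ the same identity gives $a=b$; that is the real reason behind your ``take $a=b$'', since iterating cannot change $a/b$.) From $(f^{\alpha+na},g^{\beta+nb})C=(f^\alpha,g^\beta)C$ one gets $\varphi^{\beta'+nb}(C)=(f^{n(b-a)},\operatorname{id})\,\varphi^{\beta'}(C)$ for $\beta'\ge\beta$. So along each residue class mod $b$ you are iterating $(F,\operatorname{id})$, with $F=f^{b-a}$, on an ample curve. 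The argument then runs as follows.
\begin{itemize}
\item Proposition \ref{Degenerate1} bounds the degrees of the restricted iterates. After passing to an iterate, the bidegrees are $(c_1,\deg(F)^nc_2)$.
\item A point lying on infinitely many iterates must be periodic. Otherwise its preimages $(p_1^i,p_2)\in C$ would give infinitely many points of $C$ on one horizontal line.
\item At the resulting fixed points, Theorem \ref{Multiplicity4} (with second local degree $1$) controls the multiplicities. So they cannot absorb an intersection number of size at least $\deg(F)^nc_1'c_2$ when $C'$ is not horizontal.
\item Suppose $C'=\mathbb{P}^1\times\{\gamma\}$ with $\gamma\notin\operatorname{Prep}(g)$. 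Take a periodic point $(\rho,\gamma)\in C\cap C'$. Its $(f^a,g^b)$-orbit would put infinitely many points of one of the finitely many ample curves in the $(f^a,g^b)$-orbit of $C$ on a single vertical line, which is impossible.
\item Conversely, for $\gamma\in\operatorname{Prep}(g)$ one has $\varphi^{\beta'+nb}(C)\cap C'=(F^n,\operatorname{id})\bigl(\varphi^{\beta'}(C)\cap C'\bigr)$. These points are $\varphi$-preperiodic because $\varphi^{\beta'}(C)$ is ample and $(f^a,g^b)$-preperiodic, so they lie in a finite set.
\end{itemize}

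\textbf{A smaller correction.} In (b) and (c'), the obstruction is $\varphi^n(C)=C'$ for all large $n$, not for infinitely many $n$. A cycle of length $>1$ through $C'$ still leaves infinitely many $n$ with finite intersection.
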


We first need the following proposition to show that, in the case $\varphi=(f,\operatorname{id})$, the bidegree of the curves $\varphi^n(C)$ grows proportionally to $(1,\deg(f)^n)$, provided that $C$ is ample.
\begin{proposition} \label{Degenerate1}
Let $\vphi = (f,\id)$ for some polynomial $f$ and assume that $C$ is an ample irreducible curve. Then the degree of $\vphi^n: C \to \vphi^n(C)$ is uniformly bounded over all $n$. 
\end{proposition}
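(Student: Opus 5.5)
Let $C$ have bidegree $(a_0,b_0)$ with $a_0,b_0\ge 1$, and let $d_n$ be the degree of $\vphi^n: C \to \vphi^n(C)$. The plan is to bound $d_n$ by the degree of the second projection $\pi_2|_C$, using only the fact that $\vphi^n$ acts trivially on the second factor. Concretely, we aim to show $d_n \le a_0$ for all $n$.

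First, since $\vphi^n = (f^n,\id)$, we have $\pi_2 \circ \vphi^n = \pi_2$ on $(\bP^1)^2$. Restricting to $C$, this gives a commutative triangle of finite maps of curves
\[
C \xrightarrow{\ \vphi^n\ } \vphi^n(C) \xrightarrow{\ \pi_2\ } \bP^1 ,
\]
whose composite is $\pi_2|_C$. Because $C$ is ample, $a_0 \ge 1$, so $C$ is not a horizontal curve $\bP^1\times\{\beta\}$. Hence $\pi_2|_C : C \to \bP^1$ is a dominant, and therefore finite, map whose degree equals the intersection number $C\cdot(\bP^1\times\{pt\}) = a_0$. The curve $\vphi^n(C)$ is then also not horizontal, since otherwise the composite would be constant. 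Thus $\pi_2|_{\vphi^n(C)}$ is finite of some degree $m_n\ge 1$.

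Next, degrees of finite maps of curves are multiplicative under composition, being degrees of field extensions $K(C)\supseteq K(\vphi^n(C)) \supseteq K(\bP^1)$. This gives $d_n \cdot m_n = a_0$, so $d_n \le a_0$ for every $n$, as desired. As a consistency check, Lemma \ref{Intersection1} with $e=1$ gives $d_n a_n = a_0$, matching $m_n = a_n$. In fact Lemma \ref{Intersection1} alone suffices: its proof only used the projection formula for the flat proper map $\vphi^n$, which remains valid when the second factor is $\id$. Its first coordinate reads $1^n\cdot a_0 = d_n a_n$ with $a_n \ge 1$, because $\vphi^n(C)$ is not horizontal.

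There is no serious obstacle here. The only point needing care is verifying that $\vphi^n(C)$ never becomes horizontal, so that $a_n\ge 1$. This follows because the second projection of $\vphi^n(C)$ equals that of $C$, which is all of $\bP^1$ since $a_0\ge1$. I would present the short field-degree argument and note the agreement with Lemma \ref{Intersection1}.
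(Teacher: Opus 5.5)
Your proposal is correct and is essentially the paper's argument. The paper observes that the fiber of $\vphi^n|_C$ over $\vphi^n(x,y)$ consists of points $(x',y)\in C$ with the same second coordinate, so it lies in $C\cap(\bP^1\times\{y\})$, which is finite of bounded size since $C$ is not horizontal. You encode the same observation $\pi_2\circ\vphi^n=\pi_2$ through the tower law, which gives the explicit bound $d_n \mid a_0$ (equivalently $d_n a_n = a_0$, from Lemma \ref{Intersection1} with $e=1$); you also rightly note that only non-horizontality of $C$ is needed.
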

\begin{proof}
Given $p = (x,y) \in C$, the preimages of $\vphi^n(p)$ for the map $\vphi^n: C\to \vphi^n(C)$ are given by $(x',y)$ where $f^n(x') = f^n(x)$. Since $C$ is not horizontal or vertical, there can only be a bounded number of possible such points $(x',y)$ on our curve $C$. Hence the degree of $\vphi^n: C \to \vphi^n(C)$ is uniformly bounded over all $n$.   
\end{proof}

\begin{proof}[Proof of Theorem \ref{thm:excluded-cases}]
We begin by showing that for each of the conditions $(a')$-$(c'')$, we can find a finite set $X$ such that 
\[
\varphi^n(C) \cap C' \subset X
\]
for infinitely many $n \ge 0$. 

\textbf{Condition $(a')$.} We note that in this case we have 
\[
\varphi^n(C) \cap C' = \varphi^n(C \cap C'). 
\]
Therefore, $C \cap C' \subset \operatorname{Prep}(\varphi)$ implies that 
\[
X := \bigcup_{n \ge 0} \varphi^n(C \cap C'),
\]
is a finite set and finishes the proof for this case.

\textbf{Condition $(b')$.} In the case $(b'):(2)$ we can take $X = \emptyset$. So, assume we are in case (1). Suppose that $\varphi^N(C) = \varphi^{N + k}(C)$ for some $k \ge 1$. For any $ 0 \le r \le k - 1$ we let
\[
X_{r} := \bigcup_{n \ge 0} \varphi^{N + r +  nk}(C) \cap C' = \varphi^{N + r}(C) \cap C'. 
\]
If $X_r$ is a finite set, we can conclude that $\varphi^{N + r +  nk}(C) \cap C'$ is always contained in $X_r$. The only way this does not happen is if 
\[
\varphi^{N + r}(C) = C'
\]
for all $0 \le r \le k - 1$. This means that $C$ must eventually stabilize to $C'$ and finishes the proof for $(b'):(1)$. 

\textbf{Condition $(c')$.} The proof of this case is identical to $(b'):(1)$. 

\textbf{Condition $(c'')$.} We must have $\deg(f)^a = \deg(g)^b$ which shows that $a < b$. Since $C$ is preperiodic under $(f^a,g^b)$, there exists $\alpha, \beta \ge 0$ such that 
\[
\left(f^{\alpha + na}, g^{\beta + nb}\right)C = \left(f^\alpha, g^\beta\right)(C), 
\]
for all $n \ge 0$. At the expense of replacing $(\alpha ,\beta)$ with $(\alpha + n'a, \beta + n'b)$ for some $n' \ge 0$ we may assume without loss of generality that $\alpha < \beta$. We have
\begin{align}
\left(f^{\beta + nb}, g^{\beta + nb}\right)C &= \left(f^{n(b - a)}, \operatorname{id}\right)\left(f^{\beta - \alpha}, \operatorname{id}\right)\left(f^{\alpha + na}, g^{\beta + nb}\right)C \notag \\
&= \left(f^{n(b - a)}, \operatorname{id}\right)\left(f^{\beta - \alpha}, \operatorname{id}\right)\left(f^\alpha, g^\beta\right)C \notag \\
&= \left(f^{n(b - a)}, \operatorname{id}\right)\left(f^{\beta}, g^\beta\right)C, \notag
\end{align}
for every $n \ge 0$. Similarly for every $\beta' \ge \beta$ we must have
\begin{align}
\label{eqn:simiplifaction-of-iterates}
\left(f^{\beta' + nb}, g^{\beta' + nb}\right)C &= \left(f^{n(b - a)}, \operatorname{id}\right)\left(f^{\beta'}, g^{\beta'}\right)C
\end{align}
for every $n \ge 0$. Equation \eqref{eqn:simiplifaction-of-iterates} yields
\[
\varphi^{\beta' + nb}(C) \cap C' = \left(f^{n(b - a)}, \operatorname{id}\right)\left(f^{\beta'}, g^{\beta'}\right)C \cap C' = \left(f^{n(b - a)}, \operatorname{id}\right)\left(\left(f^{\beta'}, g^{\beta'}\right)C \cap C' \right),
\]
where in the last equality we use the assumption that $C'$ is of the form $\bP^1 \times \{\gamma\}$. But note that 
\[
\left(\left(f^{\beta'}, g^{\beta'}\right)C \cap C' \right) \subset \operatorname{Prep}(\varphi)
\]
for every $\beta' \ge \beta$. To see this, note that $C$ is not horizontal or vertical and that $\left(f^{\beta'}, g^{\beta'}\right)C$ is preperiodic under $(f^a, g^b)$. The desired conclusion follows. 

We now prove the opposite direction of the theorem. Suppose that there exists a finite set $X$ such that 
\[
\varphi^n(C) \cap C' \subset X
\]
for infinitely many $n$. Now assume $(C, C')$ is of type $(a)$. Then, since $C'$ is exceptional, we must have
\[
\varphi^n(C) \cap C' = \varphi^n(C \cap C').
\]
Therefore, $\varphi^n(C \cap C') \subset X$ for infinitely many values of $n$ which shows that all the points in $C \cap C'$ must be preperiodic. Thus, condition $(a')$ holds as desired. 

Now suppose that $(C, C')$ is of type $(b)$. Then, if $C$ is not preperiodic, the curves $\varphi^n(C)$ must all be disjoint. The hypothesis that $\varphi^n(C)\cap C'$ is contained in a finite set for infinitely many $n$ implies that, for some $n_0$, we must have
\[
\varphi^n(C)\cap C'=\emptyset
\]
for all $n\ge n_0$. Thus $C'$ is parallel to $\varphi^{n_0}(C)$, and consequently to $\varphi^n(C)$ for all $n\ge n_0$. Hence we are in situation $(b'):(2)$.

If $C$ is preperiodic, then it is straightforward to see that the only obstruction to $\varphi^n(C)\cap C'$ being contained in a finite set infinitely often occurs when $\varphi^n(C)$ eventually stabilizes to $C'$. This shows that we must be in situation $(b'):(1)$.

Finally suppose that $(C, C')$ is of type $(c)$. So, $C$ is preperiodic under $(f^a, g^b)$ for some $a,b \ge 1$. If $\deg(f) = \deg(g)$, then we must have that $a = b$ which means that $C$ is preperiodic under $\varphi$. So, the only obstruction to $\varphi^n(C)\cap C'$ being contained in a finite set infinitely often occurs when $\varphi^n(C)$ eventually stabilizes to $C'$. Thus, we must be in situation $(c')$. 

Now suppose that $\deg(f) > \deg(g)$. We must have $\deg(f)^a = \deg(g)^b$ which shows that $a < b$. Since $C$ is preperiodic under $(f^a,g^b)$, there exists $\alpha, \beta \ge 0$ such that 
\[
\left(f^{\alpha + na}, g^{\beta + nb}\right)C = \left(f^\alpha, g^\beta\right)(C), 
\]
for all $n \ge 0$. Recall that by equation \eqref{eqn:simiplifaction-of-iterates}, for every $\beta' \ge \beta$ we must have
\begin{align}
\left(f^{\beta' + nb}, g^{\beta' + nb}\right)C &= \left(f^{n(b - a)}, \operatorname{id}\right)\left(f^{\beta'}, g^{\beta'}\right)C, \notag
\end{align}
for every $n \ge 0$. This means that after replacing $\varphi$ with $\varphi^{\beta'}$ and $C$ with an iterate $\varphi^{\beta'}(C)$ for some $\beta' \ge \beta$, we may assume without loss of generality that $\varphi = (f, \operatorname{id})$. 

Using Proposition \ref{Degenerate1}, the degree of $\varphi^n$ restricted to $C$ remains bounded. Hence, after replacing $C$ with an iterate if necessary, we assume that $\varphi^n: C \lra \varphi^n(C)$ is generically one to one for all $n \ge 1$. If we let $(c_1,c_2)$ be the bidegree of $C$, it follows from Lemma \ref{Intersection1} that the bidegree of $\varphi^n(C)$ is equal to $(c_1,d^nc_2)$. 

Now suppose that $p=(p_1,p_2)$ is a point such that
\[
p \in \varphi^n(C)\cap C'
\]
for infinitely many values of $n$. Then there exist points $p_1^i$ and an infinite sequence $\{n_i\}_{i\geq 0}\subset \mathbb N$ such that
\[
f^{n_i}(p_1^i)=p_1
\]
for all $i\geq 1$, and
\[
(p_1^i,p_2)\in C
\]
for all $i\geq 1$. If $p_1$ is not periodic under $f$, then the points $p_1^i$ must take infinitely many distinct values. It follows that $C$ contains infinitely many points on the horizontal curve $\mathbb P^1\times \{p_2\}$, and hence
\[
C=\mathbb P^1\times \{p_2\},
\]
contradicting our assumption that $C$ is ample. Therefore, $p_1$ is periodic under $f$, and consequently $p$ is periodic under $\varphi$.

Thus, after replacing $X$ by a subset, we may assume without loss of generality that
\[
X\subset \operatorname{Per}(\varphi).
\]
Moreover, after replacing $\varphi$ by an iterate, we may assume that
\[
X\subset \operatorname{Fix}(\varphi).
\]

Suppose now that $C'$ is not of the form $\mathbb P^1\times \{\gamma\}$. Then $C'$ has bidegree $(c_1',c_2')$ with $c_1'\neq 0$. Hence
\[
\varphi^n(C)\cap C'
\]
has at least $d^n c_1'c_2$ intersection points, counted with multiplicity. On the other hand, by Theorem \ref{Multiplicity4}, for every $p\in X$, the multiplicity of $p$ in the intersection $\varphi^n(C)\cap C'$ remains bounded independently of $n$. Since $X$ is finite, this contradicts the fact that the total intersection multiplicity grows at least like $d^n c_1'c_2$. We conclude that
\[
C'=\mathbb P^1\times \{\gamma\}
\]
for some $\gamma\in \mathbb P^1$.

It remains to show that $\gamma\in \operatorname{Prep}(g)$. Assume without loss of generality that 
\[
C \cap C' \subset X
\]
after replacing $C$ with an iterate $\varphi^{n_0}(C)$. Let $(\rho,\gamma)$ be a point in $C\cap C'$. Since $C\cap C'\subset X$, we have
\[
(\rho,\gamma)\in \operatorname{Per}(\varphi).
\]
In particular,
\[
\rho\in \operatorname{Per}(f).
\]
Recall also that $C$ is preperiodic under $(f^a,g^b)$. If $\gamma\notin \operatorname{Prep}(g)$, then the orbit of $(\rho,\gamma)$ under $(f^a,g^b)$ would intersect $\{\rho\}\times \mathbb P^1$ in a dense subset. This is impossible, since $C$ is neither horizontal nor vertical. Therefore,
\[
\gamma\in \operatorname{Prep}(g).
\]
This shows that we are in situation $(c'')$ and finishes the proof.

\end{proof}

\bibliography{bibfile}

@article{TT23,
 author = {Tayou, Salim and Tholozan, Nicolas},
 title = {Equidistribution of {Hodge} loci. {II}},
 fjournal = {Compositio Mathematica},
 journal = {Compos. Math.},
 issn = {0010-437X},
 volume = {159},
 number = {1},
 pages = {1--52},
 year = {2023},
 language = {English},
}

@misc{MY25,
      title={A quantitative dynamical {Z}hang fundamental inequality and Bogomolov-type problems}, 
      author={Niki Myrto Mavraki and Jit Wu Yap},
      year={2025},
      eprint={2512.07655},
      archivePrefix={arXiv},
      url={https://arxiv.org/abs/2512.07655}, 
}

@book {BG06, 
    AUTHOR = {Bombieri, E. and Gubler, W.},
     TITLE = {Heights in Diophantine Geometry},
    SERIES = {},
    VOLUME = {},
 PUBLISHER = {Cambridge University Press},
      YEAR = {2006},
}

@article{MSW23,
 author = {Mavraki, Niki Myrto and Schmidt, Harry and Wilms, Robert},
 title = {Height coincidences in products of the projective line},
 fjournal = {Mathematische Zeitschrift},
 journal = {Math. Z.},
 issn = {0025-5874},
 volume = {304},
 number = {2},
 pages = {9},
 note = {Id/No 26},
 year = {2023},
 language = {English},
}

@article{GNY18,
 author = {Ghioca, Dragos and Nguyen, Khoa D. and Ye, Hexi},
 title = {The dynamical {Manin}-{Mumford} conjecture and the dynamical {Bogomolov} conjecture for endomorphisms of {{\((\mathbb{P}^{1})^{n}\)}}},
 fjournal = {Compositio Mathematica},
 journal = {Compos. Math.},
 issn = {0010-437X},
 volume = {154},
 number = {7},
 pages = {1441--1472},
 year = {2018},
 language = {English},
}

@book{Zan12,
 author = {Zannier, Umberto},
 title = {Some problems of unlikely intersections in arithmetic and geometry. {With} appendixes by {David} {Masser}},
 fseries = {Annals of Mathematics Studies},
 series = {Ann. Math. Stud.},
 volume = {181},
 isbn = {978-0-691-15371-1; 978-0-691-15370-4; 978-1-400-84271-1},
 year = {2012},
 publisher = {Princeton, NJ: Princeton University Press},
 language = {English},
}

@article {Kawaguchi-Silverman,
    AUTHOR = {Kawaguchi, Shu and Silverman, Joseph H.},
     TITLE = {On the dynamical and arithmetic degrees of rational self-maps of algebraic varieties},
   JOURNAL = {J. Reine Angew. Math.},
  FJOURNAL = {Journal f\"ur die Reine und Angewandte Mathematik. [Crelle's
              Journal]},
    VOLUME = {713},
      YEAR = {2016},
     PAGES = {21--48},
      ISSN = {0075-4102,1435-5345},
   MRCLASS = {37P55 (37P30)},
  MRNUMBER = {3483624},
MRREVIEWER = {Yu\ Yasufuku},
       DOI = {10.1515/crelle-2014-0020},
       URL = {https://doi.org/10.1515/crelle-2014-0020},
}

@article {Myrto-Schmidt,
    AUTHOR = {Mavraki, Niki Myrto and Schmidt, Harry},
     TITLE = {On the dynamical {B}ogomolov conjecture for families of split
              rational maps},
   JOURNAL = {Duke Math. J.},
  FJOURNAL = {Duke Mathematical Journal},
    VOLUME = {174},
      YEAR = {2025},
    NUMBER = {5},
     PAGES = {803--856},
      ISSN = {0012-7094,1547-7398},
   MRCLASS = {37P05 (14G40 37F10 37F44)},
  MRNUMBER = {4905537},
       DOI = {10.1215/00127094-2024-0041},
       URL = {https://doi.org/10.1215/00127094-2024-0041},
}

@article {Dang-Favre,
    AUTHOR = {Dang, Nguyen-Bac and Favre, Charles},
     TITLE = {Spectral interpretations of dynamical degrees and
              applications},
   JOURNAL = {Ann. of Math. (2)},
  FJOURNAL = {Annals of Mathematics. Second Series},
    VOLUME = {194},
      YEAR = {2021},
    NUMBER = {1},
     PAGES = {299--359},
      ISSN = {0003-486X,1939-8980},
   MRCLASS = {37F80 (14E05 32H50)},
  MRNUMBER = {4276288},
MRREVIEWER = {Mattias\ Jonsson},
       DOI = {10.4007/annals.2021.194.1.5},
       URL = {https://doi.org/10.4007/annals.2021.194.1.5},
}

@article {Russakovskii,
    AUTHOR = {Russakovskii, Alexander and Shiffman, Bernard},
     TITLE = {Value distribution for sequences of rational mappings and
              complex dynamics},
   JOURNAL = {Indiana Univ. Math. J.},
  FJOURNAL = {Indiana University Mathematics Journal},
    VOLUME = {46},
      YEAR = {1997},
    NUMBER = {3},
     PAGES = {897--932},
      ISSN = {0022-2518,1943-5258},
   MRCLASS = {32H30 (32H50)},
  MRNUMBER = {1488341},
MRREVIEWER = {Jeffrey\ Diller},
       DOI = {10.1512/iumj.1997.46.1441},
       URL = {https://doi.org/10.1512/iumj.1997.46.1441},
}

@article{Truong-char-0,
  title={(Relative) dynamical degrees of rational maps over an algebraic closed field},
  author={Truong, Tuyen Trung},
  journal={arXiv:1501.01523},
  year={2015}
}

@article {Truong-arbitrary-char,
    AUTHOR = {Truong, Tuyen Trung},
     TITLE = {Relative dynamical degrees of correspondences over a field of
              arbitrary characteristic},
   JOURNAL = {J. Reine Angew. Math.},
  FJOURNAL = {Journal f\"ur die Reine und Angewandte Mathematik. [Crelle's
              Journal]},
    VOLUME = {758},
      YEAR = {2020},
     PAGES = {139--182},
      ISSN = {0075-4102,1435-5345},
   MRCLASS = {37P05 (14E05 37F05 37F80)},
  MRNUMBER = {4048444},
MRREVIEWER = {Yu\ Yasufuku},
       DOI = {10.1515/crelle-2017-0052},
       URL = {https://doi.org/10.1515/crelle-2017-0052},
}

@article {Friedland,
    AUTHOR = {Friedland, Shmuel},
     TITLE = {Entropy of polynomial and rational maps},
   JOURNAL = {Ann. of Math. (2)},
  FJOURNAL = {Annals of Mathematics. Second Series},
    VOLUME = {133},
      YEAR = {1991},
    NUMBER = {2},
     PAGES = {359--368},
      ISSN = {0003-486X,1939-8980},
   MRCLASS = {58F23 (30D05)},
  MRNUMBER = {1097242},
MRREVIEWER = {Peter\ Walters},
       DOI = {10.2307/2944341},
       URL = {https://doi.org/10.2307/2944341},
}

@article {Dinh-Sibony,
    AUTHOR = {Dinh, Tien-Cuong and Sibony, Nessim},
     TITLE = {Une borne sup\'erieure pour l'entropie topologique d'une
              application rationnelle},
   JOURNAL = {Ann. of Math. (2)},
  FJOURNAL = {Annals of Mathematics. Second Series},
    VOLUME = {161},
      YEAR = {2005},
    NUMBER = {3},
     PAGES = {1637--1644},
      ISSN = {0003-486X,1939-8980},
   MRCLASS = {32H50 (37B40 37F05)},
  MRNUMBER = {2180409},
MRREVIEWER = {Jean-Yves\ Briend},
       DOI = {10.4007/annals.2005.161.1637},
       URL = {https://doi.org/10.4007/annals.2005.161.1637},
}

@article {Gromov,
    AUTHOR = {Gromov, Mikha\"il},
     TITLE = {On the entropy of holomorphic maps},
   JOURNAL = {Enseign. Math. (2)},
  FJOURNAL = {L'Enseignement Math\'ematique. Revue Internationale. 2e
              S\'erie},
    VOLUME = {49},
      YEAR = {2003},
    NUMBER = {3-4},
     PAGES = {217--235},
      ISSN = {0013-8584},
   MRCLASS = {37F10 (32H50 37B40)},
  MRNUMBER = {2026895},
MRREVIEWER = {Serge\ Cantat},
}

@article {Yomdin,
    AUTHOR = {Yomdin, Y.},
     TITLE = {Volume growth and entropy},
   JOURNAL = {Israel J. Math.},
  FJOURNAL = {Israel Journal of Mathematics},
    VOLUME = {57},
      YEAR = {1987},
    NUMBER = {3},
     PAGES = {285--300},
      ISSN = {0021-2172},
   MRCLASS = {58C27 (32B20 54H20 57R45 58F11)},
  MRNUMBER = {889979},
MRREVIEWER = {Pierre\ D.\ Milman},
       DOI = {10.1007/BF02766215},
       URL = {https://doi.org/10.1007/BF02766215},
}

@article {junyi-DML-A2,
    AUTHOR = {Xie, Junyi},
     TITLE = {The dynamical {M}ordell-{L}ang conjecture for polynomial
              endomorphisms of the affine plane},
   JOURNAL = {Ast\'erisque},
  FJOURNAL = {Ast\'erisque},
    NUMBER = {394},
      YEAR = {2017},
     PAGES = {vi+110},
      ISSN = {0303-1179,2492-5926},
      ISBN = {978-2-85629-869-5},
   MRCLASS = {37P05 (14R10)},
  MRNUMBER = {3758955},
MRREVIEWER = {Dragos\ Ghioca},
}

@article {scanlon-eterovic,
    AUTHOR = {Eterovi\'c, Sebastian and Scanlon, Thomas},
     TITLE = {Likely intersections},
   JOURNAL = {Forum Math. Sigma},
  FJOURNAL = {Forum of Mathematics. Sigma},
    VOLUME = {13},
      YEAR = {2025},
     PAGES = {Paper No. e199, 21},
      ISSN = {2050-5094},
   MRCLASS = {14G35 (03C64 11G18 14D07)},
  MRNUMBER = {5002154},
       DOI = {10.1017/fms.2025.10114},
       URL = {https://doi.org/10.1017/fms.2025.10114},
}

@article {nguyen-height-transcendence,
    AUTHOR = {Nguyen, Khoa D.},
     TITLE = {Transcendence of polynomial canonical heights},
   JOURNAL = {Math. Ann.},
  FJOURNAL = {Mathematische Annalen},
    VOLUME = {387},
      YEAR = {2023},
    NUMBER = {1-2},
     PAGES = {1--15},
      ISSN = {0025-5831,1432-1807},
   MRCLASS = {37P05 (11G50 11J81 37P30)},
  MRNUMBER = {4631037},
MRREVIEWER = {Ruofan\ Li},
       DOI = {10.1007/s00208-022-02465-x},
       URL = {https://doi.org/10.1007/s00208-022-02465-x},
}

@article {Xie-transcendence,
    AUTHOR = {Xie, Junyi},
     TITLE = {Algebraicity criteria, invariant subvarieties and
              transcendence problems from arithmetic dynamics},
   JOURNAL = {Peking Math. J.},
  FJOURNAL = {Peking Mathematical Journal},
    VOLUME = {7},
      YEAR = {2024},
    NUMBER = {1},
     PAGES = {345--398},
      ISSN = {2096-6075,2524-7182},
   MRCLASS = {11J81 (37P30 37P55)},
  MRNUMBER = {4711366},
MRREVIEWER = {Thomas\ Ward},
       DOI = {10.1007/s42543-022-00059-9},
       URL = {https://doi.org/10.1007/s42543-022-00059-9},
}

@article {Ruggiero,
    AUTHOR = {Ruggiero, Matteo},
     TITLE = {Rigidification of holomorphic germs with noninvertible
              differential},
   JOURNAL = {Michigan Math. J.},
  FJOURNAL = {Michigan Mathematical Journal},
    VOLUME = {61},
      YEAR = {2012},
    NUMBER = {1},
     PAGES = {161--185},
      ISSN = {0026-2285,1945-2365},
   MRCLASS = {37F10 (32B10 32H50)},
  MRNUMBER = {2904007},
MRREVIEWER = {Romain\ Dujardin},
       DOI = {10.1307/mmj/1331222853},
       URL = {https://doi.org/10.1307/mmj/1331222853},
}

@article {seigal-yakovenko,
    AUTHOR = {Seigal, Anna Leah and Yakovenko, Sergei},
     TITLE = {Local dynamics of intersections: {V}. {I}. {A}rnold's theorem
              revisited},
   JOURNAL = {Israel J. Math.},
  FJOURNAL = {Israel Journal of Mathematics},
    VOLUME = {201},
      YEAR = {2014},
    NUMBER = {2},
     PAGES = {813--833},
      ISSN = {0021-2172,1565-8511},
   MRCLASS = {32B05 (13F25 32S99 37F10)},
  MRNUMBER = {3265304},
MRREVIEWER = {Carles\ Bivi\`a-Ausina},
       DOI = {10.1007/s11856-014-1065-4},
       URL = {https://doi.org/10.1007/s11856-014-1065-4},
}

@incollection {Arnold-bounded-mult,
    AUTHOR = {Arnold, V. I.},
     TITLE = {Bounds for {M}ilnor numbers of intersections in holomorphic
              dynamical systems},
 BOOKTITLE = {Topological methods in modern mathematics ({S}tony {B}rook,
              {NY}, 1991)},
     PAGES = {379--390},
 PUBLISHER = {Publish or Perish, Houston, TX},
      YEAR = {1993},
   MRCLASS = {32H50 (58F23)},
  MRNUMBER = {1215971},
MRREVIEWER = {Peter\ M.\ Makienko},
}

@article{saleh-26,
  title={Bialgebraic geometry of {B}\"ottcher coordinates},
  author={Saleh, Sina},
  journal={arXiv preprint arXiv:2606.24553},
  year={2026}
}

@article {eigenvaluations,
    AUTHOR = {Favre, Charles and Jonsson, Mattias},
     TITLE = {Eigenvaluations},
   JOURNAL = {Ann. Sci. \'Ecole Norm. Sup. (4)},
  FJOURNAL = {Annales Scientifiques de l'\'Ecole Normale Sup\'erieure.
              Quatri\`eme S\'erie},
    VOLUME = {40},
      YEAR = {2007},
    NUMBER = {2},
     PAGES = {309--349},
      ISSN = {0012-9593},
   MRCLASS = {37F10 (32H50)},
  MRNUMBER = {2339287},
MRREVIEWER = {Romain\ Dujardin},
       DOI = {10.1016/j.ansens.2007.01.002},
       URL = {https://doi.org/10.1016/j.ansens.2007.01.002},
}

@book {Arnold-problems,
    AUTHOR = {Arnold, Vladimir I.},
     TITLE = {Arnold's problems},
   EDITION = {revised},
      NOTE = {With a preface by V. Philippov, A. Yakivchik and M. Peters},
 PUBLISHER = {Springer-Verlag, Berlin; PHASIS, Moscow},
      YEAR = {2004},
     PAGES = {xvi+639},
      ISBN = {3-540-20614-0},
   MRCLASS = {58-02 (00A07 01A72 37-02 53-02 57-02)},
  MRNUMBER = {2078115},
}

@article {Gignac,
    AUTHOR = {Gignac, William},
     TITLE = {On the growth of local intersection multiplicities in
              holomorphic dynamics: a conjecture of {A}rnold},
   JOURNAL = {Math. Res. Lett.},
  FJOURNAL = {Mathematical Research Letters},
    VOLUME = {21},
      YEAR = {2014},
    NUMBER = {4},
     PAGES = {713--731},
      ISSN = {1073-2780,1945-001X},
   MRCLASS = {32H50 (32S05 37F10)},
  MRNUMBER = {3275644},
MRREVIEWER = {Jeffrey\ Diller},
       DOI = {10.4310/MRL.2014.v21.n4.a7},
       URL = {https://doi.org/10.4310/MRL.2014.v21.n4.a7},
}

@book {milnor,
    AUTHOR = {Milnor, John},
     TITLE = {Dynamics in one complex variable},
      NOTE = {Introductory lectures},
 PUBLISHER = {Friedr. Vieweg \& Sohn, Braunschweig},
      YEAR = {1999},
     PAGES = {viii+257},
      ISBN = {3-528-03130-1},
   MRCLASS = {37Fxx (30D05 37-01)},
  MRNUMBER = {1721240},
MRREVIEWER = {Dierk\ Schleicher},
}

@article{Baldi-Urbanik,
  title={Intersections and the {B}\'ezout {R}ange: {A}belian {V}arieties},
  author={Baldi, Gregorio and Urbanik, David},
  journal={arXiv preprint arXiv:2604.02186},
  year={2026}
}

@book {Wal04,
    AUTHOR = {Wall, C. T. C.},
     TITLE = {Singular points of plane curves},
    SERIES = {London Mathematical Society Student Texts},
    VOLUME = {63},
 PUBLISHER = {Cambridge University Press, Cambridge},
      YEAR = {2004},
     PAGES = {xii+370},
      ISBN = {0-521-83904-1; 0-521-54774-1},
   MRCLASS = {14H20 (14H50 32S55 57R45)},
  MRNUMBER = {2107253},
MRREVIEWER = {C\'{\i}cero\ Carvalho},
       DOI = {10.1017/CBO9780511617560},
       URL = {https://doi.org/10.1017/CBO9780511617560},
}

@article {Ghi19,
    AUTHOR = {Ghioca, Dragos},
     TITLE = {The dynamical {M}ordell-{L}ang conjecture in positive
              characteristic},
   JOURNAL = {Trans. Amer. Math. Soc.},
  FJOURNAL = {Transactions of the American Mathematical Society},
    VOLUME = {371},
      YEAR = {2019},
    NUMBER = {2},
     PAGES = {1151--1167},
      ISSN = {0002-9947,1088-6850},
   MRCLASS = {37P55 (11G10 14G17)},
  MRNUMBER = {3885174},
MRREVIEWER = {Joseph\ H.\ Silverman},
       DOI = {10.1090/tran/7261},
       URL = {https://doi.org/10.1090/tran/7261},
}

@article {MS14,
    AUTHOR = {Medvedev, Alice and Scanlon, Thomas},
     TITLE = {Invariant varieties for polynomial dynamical systems},
   JOURNAL = {Ann. of Math. (2)},
  FJOURNAL = {Annals of Mathematics. Second Series},
    VOLUME = {179},
      YEAR = {2014},
    NUMBER = {1},
     PAGES = {81--177},
      ISSN = {0003-486X,1939-8980},
   MRCLASS = {37F10 (14C99 14H99)},
  MRNUMBER = {3126567},
       DOI = {10.4007/annals.2014.179.1.2},
       URL = {https://doi.org/10.4007/annals.2014.179.1.2},
}

@article {GNY19,
    AUTHOR = {Ghioca, D. and Nguyen, K. D. and Ye, H.},
     TITLE = {The dynamical {M}anin-{M}umford conjecture and the dynamical
              {B}ogomolov conjecture for split rational maps},
   JOURNAL = {J. Eur. Math. Soc. (JEMS)},
  FJOURNAL = {Journal of the European Mathematical Society (JEMS)},
    VOLUME = {21},
      YEAR = {2019},
    NUMBER = {5},
     PAGES = {1571--1594},
      ISSN = {1435-9855,1435-9863},
   MRCLASS = {37P05 (37P30)},
  MRNUMBER = {3941498},
MRREVIEWER = {Joseph\ H.\ Silverman},
       DOI = {10.4171/JEMS/869},
       URL = {https://doi.org/10.4171/JEMS/869},
}

@misc{stacks-project,
    shorthand    = {Stacks},
    author       = {The {Stacks Project Authors}},
    title        = {\textit{Stacks Project}},
    howpublished = {\url{https://stacks.math.columbia.edu}},
  }

@article{Sch23,
    AUTHOR = {Schmidt, Harry},
     TITLE = {Polynomial dynamics and local analysis of small and grand
              orbits},
   JOURNAL = {Compos. Math.},
  FJOURNAL = {Compositio Mathematica},
    VOLUME = {159},
      YEAR = {2023},
    NUMBER = {1},
     PAGES = {53--86},
      ISSN = {0010-437X,1570-5846},
   MRCLASS = {37P05 (14G20)},
  MRNUMBER = {4533444},
       DOI = {10.1112/s0010437x22007692},
       URL = {https://doi.org/10.1112/s0010437x22007692},
}

@article{Sil93,
    AUTHOR = {Silverman, Joseph H.},
     TITLE = {Integer points, {D}iophantine approximation, and iteration of
              rational maps},
   JOURNAL = {Duke Math. J.},
  FJOURNAL = {Duke Mathematical Journal},
    VOLUME = {71},
      YEAR = {1993},
    NUMBER = {3},
     PAGES = {793--829},
      ISSN = {0012-7094},
   MRCLASS = {11G99 (11J99)},
  MRNUMBER = {1240603},
MRREVIEWER = {Jeffrey Lin Thunder},
       DOI = {10.1215/S0012-7094-93-07129-3},
       URL = {https://doi.org/10.1215/S0012-7094-93-07129-3},
}
\bibliographystyle{alpha}

\end{document}